\newtheorem{theorem}{Theorem}
\newtheorem{corollary}[theorem]{Corollary}
\newtheorem{lemma}[theorem]{Lemma} 
\newtheorem{proposition}[theorem]{Proposition}
\newtheorem{definition}{Definition}
\newcommand{\bc}{\mathbb{C}}
\newcommand{\bp}{\mathbb{ P}}
\newcommand{\bz}{\mathbb{Z}}
\newcommand{\br}{\mathbb{R}}
\newcommand{\bs}{\mathbb{S}}
\newcommand{\cd}{\mathcal{D}}
\newcommand{\ce}{\mathcal{E}}
\newcommand{\cs}{\mathcal{S}}
\newcommand{\cg}{\mathcal{G}}
\newcommand{\cf}{\mathcal{F}}
\newcommand{\ct}{\mathcal{T}}
\newcommand{\bx}{\mathbb{X}}
\newcommand{\ca}{\mathcal{A}}
\newcommand{\hk}{\hookrightarrow}
\newcommand{\bg}{\bigskip}
\newcommand{\med}{\medskip}
\newcommand{\la}{\longrightarrow}
\newcommand{\bfl}{\begin{flushleft}}
\newcommand{\efl}{\end{flushleft}}
\newcommand{\eps}{\epsilon}
\newcommand{\xr}{\xrightarrow}
\newcommand{\bcp}{\bc \bp}
\newcommand{\ltm}{LM^{-TM}}
\newcommand{\sm}{\Sigma^\infty_M(LM_+)}
\newcommand{\smp}{\Sigma^\infty_M(P^{Ad}_+)}
\newcommand{\sxp}{\Sigma^\infty_X(P^{Ad}_+)}
\newcommand{\pad}{P^{Ad}}
\newcommand{\tv}{T_{vert}}
\newcommand{\mf}{\mathfrak}
\newcommand{\nee}{\nu_\epsilon (e)}
\newcommand{\mtm}{M^{-TM}}
\newcommand{\lom}{\Sigma^\infty (\Omega M_+)}
\newcommand{\omg}{\Sigma^\infty (G_+)}
\newcommand{\thomsn}{(\Omega S^n)^{\Omega \alpha}}
\newcommand{\thomk}{(\Omega S^{2k+1})^{\Omega (-\alpha_k)}}
\newcommand{\aop}{A \wedge A^{op}}
\newcommand{\bst}{\bs_{-\tau_M}}
\newcommand{\bse}{\bs_{\eta\circ f}}
\newcommand{\slm}{\Sigma^\infty (\Omega M_+)}
\newcommand{\negtm}{-\tau_M}
\newcommand{\sgm}{\Sigma^\infty(\Omega M_+)}
\begin{document}

\title{ Twisted Calabi-Yau ring spectra, string topology,  and gauge symmetry} 
\author{Ralph L. Cohen \thanks{The first author was partially supported by a grant from the NSF.} \\ Department of Mathematics \\Stanford University \\ Bldg. 380 \\ Stanford, CA 94305   \and  Inbar Klang \thanks{The second author was supported by a Gabilan Stanford Graduate Fellowship.} \\ Department of Mathematics \\Stanford University \\ Bldg. 380 \\ Stanford, CA 94305}
\date{\today}
\maketitle 
\begin{abstract}  In this paper we import the theory of   ``Calabi-Yau" algebras and categories from symplectic topology and topological field theories, to the setting of spectra in stable homotopy theory.  Twistings in this theory will be particularly important.   There will be two types of Calabi-Yau structures in the setting of ring spectra: one that applies to compact algebras and one that applies to smooth algebras.  The main application of twisted compact Calabi-Yau  ring spectra  that we will study  is to   describe, prove,  and explain a certain duality phenomenon in string topology.  This is a duality between the manifold string topology of Chas and Sullivan \cite{chassullivan} and the Lie group string topology of Chataur-Menichi \cite{chataurmenichi}.  This will extend and generalize work of Gruher \cite{gruher}.      Then, generalizing work of the first author and Jones in \cite{cjgauge}, we show how the gauge group of the principal bundle  acts on this  compact Calabi-Yau structure, and we compute some explicit examples. 
We then extend the notion of the Calabi-Yau structure to smooth ring spectra,   and prove that Thom ring spectra of (virtual) bundles over the loop space, $\Omega M$,  have this structure.  In the case when $M$ is a sphere we will use these   twisted smooth Calabi-Yau ring spectra   to   study   Lagrangian immersions of the sphere  into its cotangent bundle.  We recast the work of Abouzaid and Kragh \cite{ak} to show that the topological Hochschild homology of the  Thom ring spectrum induced by the $h$-principle classifying map of the Lagrangian immersion, detects whether that immersion can be Lagrangian isotopic to an embedding. We then compute some examples.    Finally, we interpret  these Calabi-Yau structures directly in terms of   topological Hochschild homology and cohomology. 
    \end{abstract}

\tableofcontents

\section*{Introduction} 
The theory of Calabi-Yau algebras and categories have proven to be very important in symplectic topology and the study of topological field theories
\cite{costello}, \cite{ks}, \cite{kv}, \cite{lurie}, \cite{cg}.  One of the goals of this paper is to adapt this theory to the setting of spectra in stable homotopy theory, and to apply it to  prove and explain a 
  duality relationship between the string topology of a manifold, and the string topology of a classifying space of a compact Lie group.  We also use this notion to study Lagrangian immersions of spheres.

 By way of background, recall that ``string topology" is a term that was originally coined by Chas and Sullivan in their seminal paper \cite{chassullivan}.  In that paper this term referred to certain algebraic properties of the homology of the loop space of a closed, oriented manifold, $H_*(LM)$,  that were the  result of a type of intersection theory in $LM$.  This intersection theory came about by studying the fibration $\Omega M \to LM \xr{ev} M$, where $ev$ evaluates a loop
at $1 \in S^1$.  Even though the loop space  is itself  infinite dimensional,  the intersection theory defining the string topology operations is ultimately possible because of the finite dimensionality and compactness of $M$, as well as the  fiberwise multiplicative properties of this fibration. 

Since that time the subject has expanded considerably. An important variation of the string topology intersection theory was described by Chataur and Menichi in \cite{chataurmenichi} where they defined operations on the cohomology of the loop space of the classifying space of a compact Lie group, $LBG$.  In this setting the analogous fibration, $G \to LBG \xr{ev}  BG$
is studied, and the intersection theory defining these operations is possible because of the compactness of $G$ as well as the fiberwise multiplicative properties of this fibration.  A theory that includes both the string topology of a manifold and that of classifying spaces was developed in the setting of stacks in \cite{LUX} and \cite{BGNX}.  In this setting the   intersection theory is done in an appropriate algebraic geometric category.

An observation that helped to shed light on this intersection theory was made by the first author and Klein in \cite{CohenKlein} when they classified ``umkehr maps" that satisfy appropriate naturality and linearity properties.  This led to the observation that the ring spectrum $LM^{-TM}$,  which was shown to realize the Chas-Sullivan loop product by the first author and Jones in \cite{cohenjones}, can be viewed as a twisted generalized cohomology theory evaluated on  the manifold $M$.
Specifically, if one takes the fiberwise suspension spectrum of the fibration $\Omega M \to LM \xr{ev} M$, and denotes the resulting parameterized spectrum by the notation $\Sigma^\infty(\Omega M_+) \to \Sigma^\infty_M(LM_+) \to M,$ then the result  is a parameterized ring spectrum which defines  a twisted cohomology theory $\cs^\bullet_M$ from the category of  spaces over $M$, $\ct_M$,  to  an appropriate category of spectra.   
  If $f : X \to M$ is an object in $\ct_M$, then $\cs_M^\bullet (X,f)  = \Gamma_X(f^*(\sm))$, the spectrum of sections over $X$ of the pullback via $f$  of the parameterized spectrum $\sm \to M$.  See \cite{CohenKlein} and \cite{MS} for details.   Since $\sm$ is a parameterized ring spectrum, this spectrum of sections  inherits a ring spectrum structure.   Moreover it was proved in \cite{CohenKlein} the value of this cohomology on the identity map $id : M \xr{=} M \in \ct_M$, has the homotopy type
$$
\cs_M^\bullet (M)  = \Gamma_M(\sm) \simeq LM^{-TM}
$$
as ring spectra.  This equivalence is a type of twisted Poincar\'e or Atiyah duality as explained in \cite{CohenKlein}. Moreover one sees that the string topology intersection pairing (loop product) on $H_*(LM^{-TM}) \cong H_{*+n}(LM)$ corresponds, via this twisted Poincar\'e duality, to a generalized cup product pairing in the cohomology $\cs_M^\bullet (M)$. This is a twisted generalization of the well-known phenomenon that the intersection product in $H_*(M)$ corresponds up to sign  under traditional Poincar\'e duality, to the cup product in $ H^*(M)$. 

As observed by Gruher and Salvatore \cite{gruhersalvatore}, the string topology product exists in the presence of any
fiberwise monoid over a closed manifold,  $Q \to E \to M$.  Here $Q$ is a monoid, and the bundle $E$ comes equipped  with a fiberwise product $E \times_M E \to E$ over $M$, consistent with the monoid structure of the fiber $Q$.  In this case the Thom spectrum $E^{-TM}$ is a ring spectrum. It was also observed in \cite{gruhersalvatore} that principal bundles $G \to P \to M$ give rise to fiberwise monoids by taking
the associated adjoint bundle, $G \to P^{Ad} \to M$ where $P^{Ad} = P \times_G G^{Ad}$.  Here $G^{Ad}$ denotes $G$ with the left $G$-action given by conjugation.  

 As observed in \cite{cjgauge}, the string topology of principal bundles over manifolds can also be represented by twisted cohomology theories.   The representing parameterized spectrum is the fiberwise suspension spectrum
$\Sigma^\infty (G_+) \to \Sigma^\infty_M (P^{Ad}_+) \to M$.  Let $\cs_P^\bullet$ denote the corresponding twisted cohomology theory.  In particular
$$
\cs_P^\bullet (M) = \Gamma_M(\smp) \simeq (P^{Ad})^{-TM},
$$
and the ring structure comes from a generalized cup product on 
$\smp^\bullet (M)$.  We refer to $\cs_P^\bullet (-)$   as the ``{\it manifold string topology}" structure on the principal bundle $P$. 

 This perspective on the string topology spectrum,  $LM^{-TM}$, or more generally  $ (P^{Ad})^{-TM}$, in terms of the sections   of a parameterized   spectrum was particularly useful in \cite{cjgauge}, where the units of these ring spectra were studied.  In particular
it was shown that the gauge group $\cg (P)$ of the principal bundle acts naturally on the string topology spectrum, and so there is a homomorphism,
$$
\cg (P) \to GL_1((P^{Ad})^{-TM})
$$
which was studied and computed in \cite{cjgauge}.

\med
The first goal of this paper is to show that there is a dual construction for the string topology of the classifying space of a compact Lie group, to  investigate this duality using a stable homotopy theoretic version of compact Calabi-Yau algebras,   and to compute some of its properties  including  gauge symmetry.

\med
We now state the results more precisely.  Let $G$ be a compact Lie group, and let $G \to P \to X$ be a principal $G$-bundle.  In this, $X$ can be any space of the homotopy type of a $CW$-complex.  It need not be finite.  In particular, an important example is the universal principal bundle $G \to EG \to BG$.   As before,
let $G \to P^{Ad} \to X$ be the corresponding adjoint bundle.  Recall that in the case of the universal  bundle, $EG^{Ad} \simeq LBG$. 

 Consider the fiberwise suspension spectrum,
$$
\Sigma^\infty (G_+) \to \smp \to X,
$$  and   let $\cd (\smp)$ be the fiberwise Spanier-Whitehead dual, as in \cite{MS}.  This is a parameterized spectrum over $X$, whose fibers are
the Spanier-Whitehead duals of the fibers of $\smp$:
$$
G^\vee \to \cd (\smp) \to X,
$$ where  $G^\vee = Map (\Sigma^\infty (G_+), \bs)$.  Here $\bs$ denotes the sphere spectrum.   Notice that $G^\vee$ is a coalgebra spectrum, with coalgebra structure dual to the ring structure on $\Sigma^\infty (G_+)$. 

We denote the twisted homology theory associated to this parameterized spectrum by $\cs^P_\bullet : \ct_X \to Spectra$.  The following will be proved in section 1.

\med

\begin{theorem}\label{main}  The parameterized spectrum $\cd (\Sigma^\infty (G_+)) \to \cd(\smp) \to X$ is a weak fiberwise coalgebra spectrum satisfying the following properties.
\begin{enumerate}
\item Let $f : Y\to X$ be an object in $\ct_X$.  Then the induced twisted homology
$ 
\cs^P_\bullet (Y, f)   
$ 
is a weak coalgebra spectrum.    
\item There is an equivalence of spectra,
$$
\alpha : (P^{Ad})^{-\tv}   \xr{\simeq}  \cs^P_\bullet (X)  
$$
where
$ (P^{Ad})^{-\tv}$ is the Thom spectrum of  minus the vertical tangent bundle $\tv \pad \to \pad$. Furthermore a Pontrjagin-Thom construction gives $(P^{Ad})^{-\tv}$ a natural coproduct which is taken by $\alpha$ to the coproduct in $\cs^P_\bullet (X) $.
\item  If one takes the cohomology of the coalgebra spectrum, $H^*(\cs^P_\bullet (Y, f) ; k )$ (here the coefficients are in a field $k$), one obtains a   graded algebra, $$
H^*(\cs^P_\bullet (Y) ; k) \otimes H^*(\cs^P_\bullet (Y); k )   \to H^*(\cs^P_\bullet (Y) ; k)
$$
which we call the ``Lie group string topology algebra of $f^*(P)$".    Using the equivalence in part 2, then when the vertical tangent bundle $T_{vert} \to P^{Ad}$
is orientable, one obtains a graded algebra  of degree $-d$, where $d = dim \, G$:
$$
H^p(\pad) \otimes H^q(\pad) \to H^{p+q-d}(\pad).
$$

\item In the case of the universal principal bundle $G \to EG \to BG$,  this algebra is isomorphic to the algebra structure in the string topology of the classifying space $BG$ as described by Chataur and Menichi \cite{chataurmenichi}
$$
H^*(\cs^{EG}_\bullet (BG)) \cong H^*(LBG).
$$
  \end{enumerate}
\end{theorem}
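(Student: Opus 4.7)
\medskip

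\noindent
\textbf{Proof plan.} The plan is to establish the fiberwise coalgebra structure on $\cd(\Sigma^\infty_X(\pad_+))$ directly, derive the section-level coalgebra and the equivalence $\alpha$ from parameterized Atiyah duality, realize the resulting coproduct via a Pontrjagin--Thom construction, and finally match everything to Chataur--Menichi in the universal bundle case.

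\medskip

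For part (1), since $G$ is a topological group, $\Sigma^\infty(G_+)$ is a ring spectrum, so its Spanier--Whitehead dual $G^\vee$ inherits a coalgebra structure. Gruher--Salvatore \cite{gruhersalvatore} observed that the adjoint bundle $\pad \to X$ is a fiberwise monoid, making the fiberwise suspension spectrum $\Sigma^\infty_X(\pad_+) \to X$ a parameterized ring spectrum in the sense of May--Sigurdsson \cite{MS}. Applying the fiberwise Spanier--Whitehead duality functor $\cd$ of \cite{MS} yields a (weak) parameterized coalgebra spectrum $\cd(\Sigma^\infty_X(\pad_+)) \to X$. Since taking sections is a lax monoidal functor from parameterized spectra (with the fiberwise smash product) to spectra, the section spectrum $\cs^P_\bullet(Y,f) = \Gamma_Y(f^*\cd(\Sigma^\infty_X(\pad_+)))$ inherits a coalgebra structure.

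\medskip

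For part (2), the key ingredient is a fiberwise Atiyah duality. Since $G$ is a closed manifold, fiberwise one has $G^\vee \simeq G^{-TG}$; since the vertical tangent bundle $\tv \to \pad$ restricts to $TG$ on each fiber, parameterized Atiyah duality (\cite{MS}, \cite{CohenKlein}) globalizes this to an equivalence
\[
\alpha:\; (\pad)^{-\tv} \;\xrightarrow{\;\simeq\;}\; \Gamma_X(\cd(\Sigma^\infty_X(\pad_+))) \;=\; \cs^P_\bullet(X).
\]
The coproduct on $(\pad)^{-\tv}$ is constructed by applying the Pontrjagin--Thom collapse to the diagonal $\Delta: \pad \hookrightarrow \pad \times_X \pad$ (whose normal bundle is $\tv$) and then using the fiberwise multiplication $\pad \times_X \pad \to \pad$, producing a map $(\pad)^{-\tv} \to (\pad)^{-\tv} \wedge (\pad)^{-\tv}$. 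Naturality of parameterized Atiyah duality identifies this Pontrjagin--Thom coproduct with the section-level coproduct under $\alpha$; this is dual to Cohen--Jones's \cite{cohenjones} realization of the string topology product as a geometric Thom collapse.

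\medskip

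For part (3), the cohomology of a coalgebra spectrum is naturally a graded algebra, and when $\tv$ is orientable the Thom isomorphism $H^*((\pad)^{-\tv}) \cong H^{*+d}(\pad)$ translates the product into one of total degree $-d$, giving the stated pairing $H^p(\pad) \otimes H^q(\pad) \to H^{p+q-d}(\pad)$. For part (4), in the universal case $\pad = EG^{Ad} \simeq LBG$ and $\tv$ restricts to the tangent bundle of the fiber $G$; Chataur--Menichi's algebra on $H^*(LBG)$ \cite{chataurmenichi} is likewise constructed from an umkehr map for the free loop fibration $G \to LBG \to BG$, and by the classification of natural umkehr maps in \cite{CohenKlein} the two agree. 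I expect the principal technical obstacle to be the homotopy-coherent verification in part (2) that the Pontrjagin--Thom coproduct and the abstract fiberwise-dual coproduct agree inside the May--Sigurdsson framework, and the analogous matching of signs, orientations, and umkehr conventions needed to pin down the identification with Chataur--Menichi in part (4).
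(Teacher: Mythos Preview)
Your proposal contains a fundamental misidentification: you write $\cs^P_\bullet(Y,f) = \Gamma_Y(f^*\cd(\Sigma^\infty_X(\pad_+)))$ and likewise $\cs^P_\bullet(X) = \Gamma_X(\cd(\Sigma^\infty_X(\pad_+)))$, but $\cs^P_\bullet$ is the twisted \emph{homology} spectrum of the parameterized spectrum $\cd(\Sigma^\infty_X(\pad_+))$, i.e.\ the quotient (orbit) spectrum $\cd(\Sigma^\infty_X(\pad_+))/X$, not the section spectrum. These are very different objects: for $X=M$ a closed manifold, the section spectrum of the fiberwise dual computes $(\pad)^\vee \simeq (\pad)^{-T\pad}$, which differs from $(\pad)^{-\tv}$ by a twist by $-TM$. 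So your asserted equivalence in part (2) is simply false as written, and for general $X$ (which need not be a manifold) the section description does not even produce the right object. Relatedly, your monoidality argument is backwards: a lax monoidal functor preserves algebras, not coalgebras. It is the quotient functor $(-)/X$ that carries fiberwise coalgebras to coalgebras, and this is what is actually needed.

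The paper's argument proceeds differently and more concretely. One chooses an equivariant embedding $e:G\hookrightarrow V$ into a finite-dimensional $G$-representation (adjoint action), so that the classical Atiyah duality map $\alpha:G^{\eta_V}\to Map(G,S^V)$ is $G$-equivariant. Applying $P\times_G(-)$ gives an explicit equivalence of parameterized spectra $\bar\alpha:P\times_G G^{-TG}\xrightarrow{\simeq}\cd(\Sigma^\infty_X(\pad_+))$; passing to the associated twisted homology (not sections) yields $(\pad)^{-\tv}\simeq\cs^P_\bullet(X)$. The coproduct on $(\pad)^{-\tv}$ is obtained by applying the equivariant Pontrjagin--Thom construction to the multiplication $\mu:G\times G\to G$ (not to the diagonal as you suggest), producing $\tau_\mu:G^{-TG}\to G^{-TG}\wedge G^{-TG}$, and one checks by an explicit diagram of Pontrjagin--Thom collapse maps that $\bar\alpha$ intertwines $\tau_\mu$ with $\mu^\vee$. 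Part (4) is then deferred to Gruher's identification rather than to the umkehr classification of \cite{CohenKlein}.
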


\bg
\noindent {\bf Comments:}  
\begin{enumerate}
\item The notion of a ``weak" fiberwise coalgebra spectrum will be defined in section 1.
\item We refer to the coalgebra spectrum     $\cs^P_\bullet (X)   \simeq  (P^{Ad})^{-\tv}$ as the  ``{\it Lie group string topology spectrum of the principal bundle $P$}".
\item The equivalence $\alpha : (P^{Ad})^{-\tv}   \xr{\simeq}   \cs^P_\bullet (X) = \cd (\smp)/X   $ can be viewed as a fiberwise Atiyah duality, which on the level of fibers
is the classical Atiyah equivalence \cite{atiyahdual}, $$\alpha :  G^{-TG} \simeq  \Sigma^{-\mf{g}}(G_+)   \xr{\simeq}      G^\vee $$  where $G^{-TG}$ is the Thom spectrum of minus the tangent bundle, which is equivariantly equivalent to the  desuspension of $\Sigma^\infty(G_+)$ by the adjoint representation of $G$ on the Lie algebra  $\mf{g}$.  
\item  The fact that the  cohomology  algebra $H^*(LBG^{-\tv}) \cong H^{*+d}(LBG)$ is the string topology of classifying spaces was  proved by Gruher in \cite{gruher}.    
\end{enumerate}

\bg 
Once this theorem is established we restrict to the situation where we have a principal $G$ - bundle over a closed manifold: $G \to P \to M$.  In this case we can study   both the ``manifold  string topology structure" of $P$, that is, the twisted cohomology theory $$\cs_P^\bullet (M) = \Gamma_M(\smp) \simeq (P^{Ad})^{-TM}$$ as well as the ``Lie group string topology structure" of $P$, which is to say the twisted homology theory $$ \cs^P_\bullet (M)   \xr{\simeq}  (P^{Ad})^{-\tv}.$$   The following is a consequence of Theorem \ref{main} as well as Gruher's work \cite{gruher}. 

\med
\begin{corollary}\label{frobenius} Let $G \to P \to M$ be a principal bundle where $G$ is a compact Lie group of dimension $d$ and $M$ is a closed manifold of dimension $n$.   The string topology spectra  $\cs_P^\bullet (M) \simeq (P^{Ad})^{-TM}$ and    $ \cs^P_\bullet (M) \simeq (\pad)^{-\tv},$    are Spanier--Whitehead dual to each other with the algebra structure of the former corresponding to the coalgebra structure of the latter under this duality.     When one applies homology, this gives $H_*(\pad)$ the structure of a Frobenius algebra of dimension $n-d$.   The multiplication in this Frobenius algebra comes from the  manifold string topology, and the comultiplication comes from the  Lie group string topology.      
\end{corollary}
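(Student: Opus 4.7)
My approach rests on Atiyah duality applied to the total space $P^{Ad}$, which is itself a closed manifold of dimension $n+d$ (the sum of the base and fiber dimensions). Since $G \to P^{Ad} \to M$ is a smooth fiber bundle, the tangent bundle splits as $TP^{Ad} \cong \tv \oplus \pi^*TM$, where $\pi : P^{Ad} \to M$. Classical Atiyah duality for the closed manifold $P^{Ad}$, applied to the virtual bundle $-\pi^*TM$, yields
$$D\bigl((P^{Ad})^{-TM}\bigr) \;\simeq\; (P^{Ad})^{\pi^*TM - TP^{Ad}} \;\simeq\; (P^{Ad})^{-\tv},$$
which establishes the Spanier--Whitehead duality between the manifold string topology spectrum $\cs_P^\bullet(M) \simeq (P^{Ad})^{-TM}$ and the Lie group string topology spectrum $\cs^P_\bullet(M) \simeq (P^{Ad})^{-\tv}$.

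Next I would verify that the algebra and coalgebra structures correspond under this duality. Both arise from the fiberwise group structure on $P^{Ad} \to M$: fiberwise multiplication $\mu : P^{Ad} \times_M P^{Ad} \to P^{Ad}$ equips the parameterized spectrum $\smp$ with a fiberwise ring spectrum structure whose section spectrum $\Gamma_M(\smp) \simeq (P^{Ad})^{-TM}$ recovers the Gruher--Salvatore manifold string topology product. Dualizing fiberwise, $\cd(\smp)$ becomes a fiberwise coalgebra spectrum; by Theorem \ref{main} its section spectrum is equivalent to $(P^{Ad})^{-\tv}$, and the resulting coproduct is by construction the Lie group string topology coproduct. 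The compatibility we need is then precisely the statement that the section functor $\Gamma_M(-)$ intertwines fiberwise Spanier--Whitehead duality with ordinary Spanier--Whitehead duality of spectra, which is exactly the Atiyah-duality calculation above performed parametrically over $M$.

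Finally, applying $H_*(-;k)$ and using the Thom isomorphisms for the oriented bundles $TM$ and $\tv$ gives
$$H_*\bigl((P^{Ad})^{-TM}\bigr) \cong H_{*+n}(P^{Ad}), \qquad H_*\bigl((P^{Ad})^{-\tv}\bigr) \cong H_{*+d}(P^{Ad}),$$
so that the spectrum-level product and coproduct descend to a multiplication of degree $-n$ and a comultiplication of degree $+d$ on $H_*(P^{Ad})$. The non-degenerate pairing induced by Poincar\'e duality for the closed $(n+d)$-manifold $P^{Ad}$ interlocks these two operations into a Frobenius algebra of dimension $n-d$. The main obstacle is the middle step: rigorously identifying the coproduct on $(P^{Ad})^{-\tv}$ produced by the Pontrjagin--Thom construction of Theorem \ref{main} with the Spanier--Whitehead dual of the Chas--Sullivan/Gruher--Salvatore product on $(P^{Ad})^{-TM}$. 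Gruher \cite{gruher} carries this out in the universal case $G \to EG \to BG$; the general case over a closed manifold $M$ follows by relativizing her argument over the base $M$, and this relativization is where most of the technical content lies.
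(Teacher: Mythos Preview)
Your approach is essentially the paper's own: classical Atiyah duality on the closed $(n+d)$-manifold $P^{Ad}$ (using $TP^{Ad}\cong T_{\mathrm{vert}}\oplus\pi^*TM$) gives the Spanier--Whitehead duality of the two Thom spectra, and the Thom isomorphisms plus this duality produce the Frobenius structure on $H_*(P^{Ad})$, exactly as the paper argues in Theorem~\ref{SWdual} and the remark following it.

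Two small corrections are worth making. First, $\cs^P_\bullet(M)$ is the twisted \emph{homology} spectrum of $\cd(\smp)$, i.e.\ the quotient $\cd(\smp)/M$, not its section spectrum; the duality you are invoking is between sections of $\smp$ and the quotient of its fiberwise dual, which is precisely why ordinary Atiyah duality on the total space $P^{Ad}$ matches the parametrized picture. Second, the ``obstacle'' you flag is already dissolved by the setup: the coalgebra structure on $\cs^P_\bullet(M)$ is \emph{defined} as the fiberwise Spanier--Whitehead dual of the ring structure on $\smp$, and Theorem~\ref{main} proves that the parametrized Atiyah map $\bar\alpha$ carries the Pontrjagin--Thom coproduct on $(P^{Ad})^{-T_{\mathrm{vert}}}$ to this dual coproduct. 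No separate relativization of Gruher's argument is needed, and in fact Gruher's result already treats the general bundle over a closed manifold, not only the universal case.
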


\med
 In  section 2  we will  define the notion of a  ``twisted compact Calabi-Yau" ring spectrum (``twisted cCY"), which can be viewed as a strengthened, derived version of Frobenius algebra in the category of spectra. This definition  is adapted from the notion  of a ``compact Calabi-Yau algebra" defined by Kontsevich and Soibelman \cite{ks}, as a way of studying two dimensional topological field theories.  (We note that Kontsevich and Soibelman used different terminology for this concept.) Related notions were defined by Costello \cite{costello} and Lurie \cite{lurie}. In these definitions, the algebra (or ring spectrum) involved must satisfy a finiteness condition called ``compactness". In the spectrum setting this means that the spectrum is a perfect module over the sphere spectrum.  In our definition of this structure  in the setting of spectra, a key role is played by a ``twisting bimodule" over the compact ring spectrum. The following is the main result of this section.

 \med
 \begin{theorem}\label{Frob}  Let $G \to P \to M$  be a principal bundle with compact Lie group fiber and closed manifold base.  Then the manifold string topology
 $\cs_P^\bullet (M)$ naturally admits the structure of  a twisted, compact Calabi-Yau ring spectrum of dimension $n-d$.   The Lie group string topology spectrum $ \cs^P_\bullet (M)$  is the twisting bimodule spectrum in this structure.  Moreover if $E_*$ is a generalized homology theory with respect to which both the vertical tangent bundle   $T_{vert} \to \pad$ and the tangent bundle $TM \to M$ are oriented, then the Calabi-Yau structure on  $\cs_P^\bullet (M)$ induces a Frobenius algebra structure on the homology  of the manifold string topology, $E_*(\cs_P^\bullet (M))$ whose dual is the homology of the Lie group string topology spectrum, $E_*(\cs^P_\bullet (M))$.   
  \end{theorem}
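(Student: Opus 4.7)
My plan is to extract the twisted compact Calabi-Yau structure directly from the fiberwise Atiyah duality of Corollary \ref{frobenius}, and then verify that this duality is compatible with the bimodule structures on both sides. Throughout, set $A := \cs_P^\blt(M) \simeq (\pad)^{-TM}$ for the candidate compact ring spectrum and $\omega := \cs^P_\blt(M) \simeq (\pad)^{-\tv}$ for the candidate twisting bimodule. Compactness of $A$ as a module over $\bs$ is automatic: $\pad$ is a closed $(n+d)$-manifold and $-\pi^*TM$ is a finite-rank virtual bundle over it, so $A$ is stably a finite CW spectrum. The $A$-bimodule structure on $\omega$ is induced, as in Section 1, by the same fiberwise multiplication $\pad \times_M \pad \to \pad$ that defines the ring structure of $A$, combined with the appropriate Thom twists relating $-\pi^*TM$ and $-\tv$.

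The Calabi-Yau equivalence itself follows from the splitting
$$
T\pad \;\cong\; \tv \;\oplus\; \pi^*TM
$$
arising from the submersion $\pi : \pad \to M$. Atiyah duality for the closed manifold $\pad$ then gives
$$
D_{\bs}(A) \;=\; D_{\bs}\bigl((\pad)^{-\pi^*TM}\bigr) \;\simeq\; (\pad)^{\pi^*TM - T\pad} \;\simeq\; (\pad)^{-\tv} \;=\; \omega,
$$
which is precisely the Spanier--Whitehead duality recorded in Corollary \ref{frobenius}. Tracking virtual ranks gives the CY dimension $n - d$.

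The main obstacle is promoting this equivalence of spectra to one of $A$-bimodules, which is what distinguishes a genuine twisted cCY structure from a mere Frobenius-type duality. The product on $A$ is constructed from a Pontrjagin--Thom collapse on the fiberwise diagonal $\Delta : \pad \hookrightarrow \pad \times_M \pad$, with Thom twist by $-\pi^*TM$; the coproduct on $\omega$ comes from the same embedding $\Delta$ via the construction of Theorem \ref{main}(2), using the complementary Thom twist by $-\tv$. I would argue these two operations are Spanier--Whitehead dual to one another by applying the umkehr-map classification of \cite{CohenKlein} to $\Delta$: the two Thom twists are complementary under the tangent splitting above, and naturality of fiberwise Atiyah duality over $M$ intertwines the collapse map on one side with its dual on the other. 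This verification is the technical heart of the argument.

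The Frobenius statement in part (2) is then formal. Given $E_*$-orientations of $\tv \to \pad$ and $TM \to M$, Thom isomorphisms yield $E_*(A) \cong E_{*+n}(\pad)$ and $E_*(\omega) \cong E_{*+d}(\pad)$. The CY bimodule equivalence $D_{\bs}(A) \simeq \omega$ descends to a perfect pairing of degree $n - d$ on $E_*(\pad)$; combined with the ring structure on $E_*(A)$ coming from the manifold string topology and, dually, the coalgebra structure on $E_*(\omega)$ coming from the Lie group string topology, this realizes $E_*(\pad)$ as a Frobenius algebra of dimension $n - d$ whose product and coproduct are exchanged by Spanier--Whitehead duality.
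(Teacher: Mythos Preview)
Your overall strategy --- establish the Spanier--Whitehead duality $D_{\bs}(A) \simeq \omega$ via the tangent splitting $T\pad \cong \tv \oplus \pi^*TM$ and Atiyah duality, then deduce the Frobenius statement from Thom isomorphisms --- matches the paper's approach exactly. The paper packages this duality as Theorem \ref{SWdual} and then assembles the cCY structure in Proposition \ref{ptmcy}.

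There is, however, a genuine gap. The definition of a twisted compact Calabi--Yau ring spectrum in this paper is not merely an $A$-bimodule equivalence $A \simeq \Sigma^{-(n-d)}\omega^\vee$; it requires a \emph{trace map} $t : THH(A; Q) \to \Sigma^{d-n}\bs$ (with $Q = \Sigma^{d-n}\omega$) such that the composite
\[
A \wedge Q \xrightarrow{\mu} Q \hookrightarrow THH(A; Q) \xrightarrow{t} \Sigma^{d-n}\bs
\]
is a nondegenerate pairing. You have produced the nondegenerate pairing (the Atiyah duality pairing), but you have not constructed $t$ on $THH$, nor shown that your pairing factors through the module action $\mu$ and the inclusion of zero-simplices. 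This is not automatic: a map out of $Q$ need not extend over the simplicial realization $THH(A;Q)$. The paper handles this by producing a ring augmentation $\eps : A \to \bs$ and a compatible bimodule map $u : Q \to \Sigma^{d-n}\bs$, both coming from the map of principal bundles $(G \to P \to M) \to (\{e\} \to M \to M)$; functoriality of $THH$ then yields $t = THH(\eps, u) : THH(A; Q) \to THH(\bs; \Sigma^{d-n}\bs) = \Sigma^{d-n}\bs$, and one checks the resulting pairing is the Spanier--Whitehead duality map.

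A secondary remark: your proposed verification of bimodule compatibility via the umkehr-map classification is more effort than needed. The paper simply \emph{defines} the $A$-bimodule structure on $Q$ to be the dual of the bimodule structure of $A$ over itself (using the SW duality you already have), so compatibility of the duality with the bimodule structures is tautological. What then needs checking is that this dual bimodule structure agrees with the geometric one coming from the Pontrjagin--Thom coproduct on $(\pad)^{-\tv}$; that is the content of Theorem \ref{SWdual}, already established.
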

  
\med

In \cite{cjgauge} an action of the gauge group $\cg (P)$ of the principal bundle $G \to P \to M$  on the manifold string topology spectrum $\cs_P^\bullet (M) = (\pad)^{-TM}$ was described and computed.    
   In section 3   we  use Theorems \ref{main} and \ref{Frob}  to describe a similar action of $\cg (P)$ on the Lie group string topology spectrum $\cs^P_\bullet (M) = (\pad)^{-\tv}$.    We also show that this gauge symmetry respects the Calabi-Yau structure.  See Theorem \ref{gaugesym} below for a precise statement.   We then compute some   explicit examples  of this gauge symmetry.

In section 4 we introduce the related notion of twisted \sl smooth \rm Calabi-Yau  ring spectra.   Smoothness is  a different form of smallness property than compactness.  A ring spectrum $A$ is \sl smooth \rm if it is perfect as a bimodule over itself.  That is, it is a perfect as a left $(A \wedge A^{op})$-module spectrum.   The spectrum notion of a ``twisted sCY"    structure, is adapted   from the notion of ``sCY" algebras and categories, that was first defined by Kontsevich and Vlassopolous \cite{kv}  and used by the first author and Ganatra in \cite{cg} to compare the string topology topological field theory to the Floer symplectic field theory of  cotangent bundles.   In the spectral theory  a twisting bimodule spectrum plays an important role.  We show that this structure  occurs in certain Thom spectra of virtual bundles over the based loop space of a manifold, $\Omega M$.  That is we prove the following  theorem:

\begin{theorem}\label{smooth}  Let  $M$ be a closed manifold, and $f : M \to BBO$ be a map to a delooping of $BO$.  Here, by Bott periodicity we may take $BBO$ to be   the infinite homogeneous space $SU/SO$.  Consider the induced map of loop spaces, $ \Omega f : \Omega M \to BO$. Then its Thom spectrum, which we denote by $(\Omega M)^{\Omega f}$,  naturally admits the structure of a twisted, smooth Calabi-Yau ring spectrum.
\end{theorem}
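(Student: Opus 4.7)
The plan is first to give $(\Omega M)^{\Omega f}$ a ring spectrum structure, then check smoothness, and finally produce a Calabi-Yau trace by a twisted Atiyah duality argument. Since $f$ lands in the delooping $BBO$ of $BO$, the adjoint $\Omega f : \Omega M \to BO$ is a loop map, hence an $H$-map; composing with the $J$-homomorphism $J: BO \to BGL_1(\bs)$ and applying the parameterized Thom spectrum construction produces a ring spectrum $A := (\Omega M)^{\Omega f}$, whose unit is induced by the basepoint inclusion $\ast \hookrightarrow \Omega M$.

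For smoothness I would use the Koszul-duality-style equivalence between (left) $\Sigma^\infty(\Omega M_+)$-module spectra and a suitable category of parameterized spectra over $M$: under this correspondence the regular module goes to the trivial $\bs$-parameterized spectrum, while twisting by $\Omega f$ on the module side matches twisting parameterized spectra by the bundle over $M$ classified by $f$. Because $M$ is a closed manifold, hence a finite CW complex, the trivial parameterized spectrum is compact on the parameterized side, so $A$ is perfect as a left module over itself; the analogous argument applied to the bimodule category (parameterized spectra over $M \times M$) shows that $A$ is perfect as a left $A \smsh A^{op}$-module, which is precisely smoothness.

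For the Calabi-Yau structure I would compute the inverse dualizing bimodule $A^{!} := \mathrm{RHom}_{A \smsh A^{op}}(A, A \smsh A^{op})$ by transporting it to the parameterized world and using fiberwise Atiyah duality on $M$. Classically, $(M_+)^\vee \simeq M^{-TM}$ for closed $M$; its twisted, parameterized version should produce an equivalence $A^{!} \simeq \Sigma^{-n} L$, where the twisting bimodule $L$ has the explicit form of a Thom spectrum over $\Omega M$ built from $\Omega f$ and the pullback of $\tau_M$ (roughly, $L = (\Omega M)^{\Omega f - \tau_M}$, with the bimodule structure coming from loop concatenation). The required trace map $\mathrm{THH}(A; L) \to \Sigma^n \bs$ is then induced by the Pontryagin-Thom construction for the fundamental class $[M] \in H_n(M)$, twisted by $\Omega f$.

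The main obstacle is verifying the non-degeneracy of this trace, i.e.\ that the bimodule map $A^{!} \to \Sigma^{-n} L$ so constructed is an equivalence. This amounts to careful derived bookkeeping under Koszul duality: one must identify source and target bimodule structures precisely, track the degree shifts coming from fiberwise Atiyah duality, and ensure the candidate twisting $L$ is chosen to absorb these shifts correctly. A secondary technical point is compatibility with the multiplicative structure on $A$; this should follow from the naturality of the parameterized Thom functor and the multiplicative properties of Atiyah duality, but these compatibilities must be tracked carefully through the Koszul equivalence.
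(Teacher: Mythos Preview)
Your overall strategy matches the paper's: establish the ring structure from $\Omega f$ being a loop map, prove smoothness by reducing to perfectness of $\bs$ over $\Sigma^\infty(\Omega M_+)$ (which follows from $M$ being a finite complex), and produce the Calabi-Yau structure from a twisted form of Atiyah duality. However, there is one definitional issue and one substantive gap.

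The definitional issue: for a \emph{smooth} Calabi-Yau structure the required datum is a \emph{cotrace} $\sigma : \Sigma^n\bs \to THH(A, P)$, not a trace $THH(A; L) \to \Sigma^n\bs$ as you propose. The nondegeneracy condition is then that the cap product $\cap\sigma : A^! \to \Sigma^{-n}P$ be an equivalence (which you do state correctly). The Pontryagin--Thom map $S^n \to M^{-TM}$ naturally points \emph{into} the relevant object, so it feeds into a cotrace, not a trace; your description has the arrow reversed.

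The substantive gap is in the smoothness and nondegeneracy steps. The paper does not argue via an abstract Koszul equivalence over $M \times M$; instead it uses a specific change-of-rings result (Theorem~5.1 of \cite{klang}): the map $\tilde\Delta : \Omega M \to \Omega M \times \Omega M$, $g \mapsto (g, g^{-1})$, induces a ring map $\Sigma^\infty(\Omega M_+) \to A \wedge A^{op}$ (the composite with $\Omega f \times \Omega f$ and multiplication being canonically null), under which there is an equivalence of $A\wedge A^{op}$-modules
\[
A \;\simeq\; \bs \wedge^L_{\Sigma^\infty(\Omega M_+)}(A \wedge A^{op}).
\]
This single equivalence does all the work: it gives smoothness (base-change perfectness of $\bs$), identifies the twisting bimodule as $P = \bs_{-\tau_M} \wedge^L_{\Sigma^\infty(\Omega M_+)}(A \wedge A^{op})$, and reduces the computation of $A^!$ to $Rhom_{\Sigma^\infty(\Omega M_+)}(\bs, A\wedge A^{op})$, where Klein's Poincar\'e duality $Rhom_{\Sigma^\infty(\Omega M_+)}(\bs, \Sigma^\infty(\Omega M_+)) \simeq \Sigma^{-n}\bs_{-\tau_M}$ can then be base-changed along $\Sigma^\infty(\Omega M_+)\to A\wedge A^{op}$ to yield $A^! \simeq \Sigma^{-n}P$ directly. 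Your proposal gestures at this via ``Koszul duality'' and ``fiberwise Atiyah duality'', but without the $\tilde\Delta$ change-of-rings step the bimodule bookkeeping you yourself flag as the main obstacle is not resolved; this is precisely the ingredient that makes the argument go through.
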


\med
\noindent \bf Remark. \rm  
When $f : M \to BBO$ is the constant map, this theorem implies that the suspension spectrum $\Sigma^\infty (\Omega M_+)$ has the structure of a twisted sCY ring spectrum.  This strengthens a result of the first author and Ganatra in \cite{cg} saying that the singular chain complex $C_*(\Omega M)$ admits the structure of a smooth Calabi-Yau differential graded algebra.

\med
Also in section 4, we describe how these ring spectra arise naturally in the study of Lagrangian immersions.  In particular, for the case of spheres, we combine the results of Abouzaid and Kragh \cite{ak} with those of the first author, Blumberg, and Schlichtkrull \cite{BCS} to prove the following (see Theorem \ref{lagTHH} for a more precise statement).

\med
\begin{theorem} Associated to a  Lagrangian immersion $\phi  : S^n \to T^*S^n$  there is a loop map $\Omega \alpha_\phi : \Omega S^n \to BU$.  If the Lagrangian immersion $\phi$ is Lagrangian isotopic to a Lagrangian embedding,  then there is an equivalence of topological Hochschild homology spectra,
$$
THH ((\Omega S^n)^{\Omega \alpha_\phi}) \simeq THH(\Sigma^\infty (\Omega S^n_+)).
$$
\end{theorem}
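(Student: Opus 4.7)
The plan is to combine two inputs: the Cohen--Blumberg--Schlichtkrull identification of $THH$ of a Thom ring spectrum as a Thom spectrum over the free loop space \cite{BCS}, and the rigidity theorem of Abouzaid--Kragh \cite{ak} which constrains the classifying data of any Lagrangian embedding $S^n \hookrightarrow T^*S^n$. The Gromov--Lees $h$-principle associates to $\phi$ a classifying map $\alpha_\phi$, whose loop $\Omega \alpha_\phi : \Omega S^n \to BU$ is a map of loop spaces, so that $(\Omega S^n)^{\Omega \alpha_\phi}$ is an $A_\infty$-ring spectrum and its topological Hochschild homology is well-defined.

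The first main step is to apply the CBS formula. For any loop map $f : \Omega X \to BU$, there is an equivalence
$$
THH((\Omega X)^{f}) \simeq (LX)^{\tilde f},
$$
where $\tilde f : LX \to BU$ is the canonical extension of $f$ across the free loop space arising from cyclic invariance of the bar construction. Taking $f=0$ recovers B\"okstedt's equivalence $THH(\Sigma^\infty \Omega X_+) \simeq \Sigma^\infty LX_+$. Specializing to $X=S^n$ and $f = \Omega \alpha_\phi$, the theorem thus reduces to showing that when $\phi$ is Lagrangian isotopic to an embedding, the extended map $\widetilde{\Omega \alpha_\phi} : LS^n \to BU$ is null-homotopic, or equivalently that the virtual bundle it classifies is stably trivial over $LS^n$.

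The second main step invokes the central theorem of Abouzaid--Kragh, which supplies exactly this nullity: a Lagrangian embedding $S^n \hookrightarrow T^*S^n$ forces the associated $BU$-class over $LS^n$ to vanish, by a capping/filling argument using pseudoholomorphic discs in the cotangent bundle. Because the homotopy class of $\alpha_\phi$ is a Lagrangian-regular-homotopy invariant of $\phi$ (by the $h$-principle), this nullity propagates to any immersion Lagrangian isotopic to an embedding. Combining with the CBS equivalence then yields
$$
THH((\Omega S^n)^{\Omega \alpha_\phi}) \simeq (LS^n)^{\widetilde{\Omega \alpha_\phi}} \simeq \Sigma^\infty L S^n_+ \simeq THH(\Sigma^\infty (\Omega S^n_+)).
$$

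The main obstacle I anticipate is a compatibility check between the two extensions over $LS^n$: the CBS extension $\tilde f$ is defined abstractly from cyclic invariance of the cyclic bar construction on $\Omega S^n$, while the Abouzaid--Kragh obstruction is constructed geometrically from fillings of immersed loops in $S^n$ by discs in $T^*S^n$. To pass the nullity from the second to the first one must identify these two extensions up to homotopy as loop maps to $BU$, and this naturality, while implicit in \cite{ak} and \cite{BCS}, is the real content that needs to be extracted. Once that identification is established, the theorem follows formally.
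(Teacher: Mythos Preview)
Your overall architecture matches the paper's: reduce via the Blumberg--Cohen--Schlichtkrull identification to Thom spectra over $LS^n$, then invoke the Abouzaid--Kragh input, with the substantive work being the compatibility between the BCS extension $\widetilde{\Omega\alpha_\phi}$ and the geometric Maslov data. The paper carries this out exactly as you anticipate: it identifies the BCS bundle $\ell(\alpha)$ with (minus) Kragh's Maslov bundle $\nu$ on $L_0S^n$, and this identification is the content of your ``compatibility check''.

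There is, however, a real gap in your Step 2. You assert that Abouzaid--Kragh proves the extended class $\widetilde{\Omega\alpha_\phi}:LS^n\to BU$ is \emph{null-homotopic} when $\phi$ is isotopic to an embedding. That is not what the paper extracts from \cite{ak}, and it is stronger than what is needed or available in that form. What \cite{ak} (building on Kragh's transfer \cite{kr}) supplies is a specific map of spectra
\[
\psi:\Sigma^\infty(L_0N_+)\longrightarrow L_0Q^{TN-TL\oplus\nu}
\]
which is shown to be a homotopy equivalence. For $Q=N=S^n$ this gives $\Sigma^\infty(LS^n_+)\simeq (LS^n)^{\ell(-\alpha)}$ as \emph{spectra}, and that is precisely the equivalence of $THH$'s once BCS is applied. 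No trivialization of the virtual bundle over $LS^n$ is asserted; an equivalence of Thom spectra does not in general force the underlying classifying maps to agree. Note also that if your nullity claim held, you would get the much stronger conclusion that the ring spectra $(\Omega S^n)^{\Omega\alpha_\phi}$ and $\Sigma^\infty(\Omega S^n_+)$ are themselves equivalent, not merely their $THH$'s---which would make the theorem's focus on $THH$ beside the point.

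So: keep your Step 1 and your identification of the compatibility issue as the crux, but replace Step 2 by the Kragh--Abouzaid--Kragh spectrum equivalence $\psi$ rather than a bundle trivialization. After identifying $\nu$ with $-\ell(\alpha)$, the equivalence $\psi$ gives directly $\Sigma^\infty(LS^n_+)\simeq (LS^n)^{\ell(-\alpha)}$, which by BCS is the desired $THH$ equivalence.
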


\med
We then use this theorem, together with homotopy theoretic results about the image of the $J$-homomorphism to recast results in \cite{ak} giving examples of Lagrangian immersions of spheres that are not Lagrangian isotopic to embeddings, but \sl are \rm smoothly isotopic to embeddings.


\medskip

\med
Finally in section 5 we describe this structure from the perspective of  topological Hochschild (co)homology.  More specifically, let   $G \to P \to M$ be a smooth principal bundle, where $G$ is a compact Lie group and $M$ is a smooth, closed manifold.  Let  $h : \Omega M \to G$ be the holonomy of a connection on $P$.  This induces a map of ring spectra   $h : \Sigma^\infty (\Omega M_+) \to \Sigma^\infty (G_+)$.    Thus $h$ defines bimodule structures on  $\Sigma^\infty (G_+)$ over $\Sigma^\infty (\Omega M_+)$. The main result of this section is  the following.

\med

\begin{theorem}\label{hochschild}  We have the following equivalences involving topological Hochschild homology $THH_\bullet$ and topological Hochschild cohomology $THH^\bullet$.
\begin{enumerate} 
\item $THH_\bullet(\Sigma^\infty (\Omega M_+), \Sigma^\infty (G_+)) \simeq \Sigma^\infty(\pad_+)$ \notag \\
\item $ THH^\bullet (\Sigma^\infty (\Omega M_+), \Sigma^\infty (G_+)) \simeq (\pad)^{-TM}  \simeq \cs_P^\bullet (M).   $ \quad \text{This equivalence is one of ring spectra.} \\
\item $THH_\bullet(\Sigma^\infty (\Omega M_+), G^\vee) \simeq (\pad)^{-\tv} \simeq \cs_\bullet^P(M).$  \quad \text{This equivalence is one of coalgebra spectra.} 
\end{enumerate}
\end{theorem}

\med
We end by describing the twisted Calabi-Yau structure on the string topology spectrum from the perspective of these topological Hochschild homology spectra.  A consequence of the resulting duality properties is the following:

\med
\begin{corollary}\label{hochschild2}
If $M$ is oriented, there is a  nondegenerate bilinear form on  Hochschild homology, 
$$HH_*(C_*(\Omega M), C_*(G))  \times HH_*(C_*(\Omega M), C_*(G)) \to  k. $$ That is, this Hochschild homology space is self dual.
\end{corollary}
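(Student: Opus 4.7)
The plan is to combine part (1) of Theorem \ref{hochschild} with the Frobenius algebra structure furnished by Corollary \ref{frobenius}. First I would identify the Hochschild homology appearing in the statement with the $k$-homology of a topological Hochschild homology spectrum: over a field $k$, the chain-level $HH_*(C_*(\Omega M;k),C_*(G;k))$ is computed by $\pi_*(THH_\bullet(\Sigma^\infty(\Omega M_+),\Sigma^\infty(G_+))\wedge Hk)$. Applying Theorem \ref{hochschild}(1), this is
$$
HH_*(C_*(\Omega M),C_*(G))\;\cong\;H_*(\pad;k).
$$

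Next I would invoke Corollary \ref{frobenius}: provided $M$ is oriented and the vertical tangent bundle $\tv\to\pad$ is $k$-orientable (automatic when $G$ is connected, since this bundle is then associated to the adjoint representation and factors through $SO(\mathfrak{g})$), the graded vector space $H_*(\pad;k)$ carries the structure of a Frobenius algebra of dimension $n-d$. The multiplication comes from the manifold string topology and the comultiplication from the Lie group string topology, and these are Spanier--Whitehead dual to each other. Any Frobenius algebra $A$ over $k$ admits a canonical nondegenerate bilinear form $\langle a,b\rangle:=\eps(a\cdot b)$, where $\eps$ is the Frobenius counit; transported back through the isomorphism above, this is the desired self-duality pairing on $HH_*(C_*(\Omega M),C_*(G))$.

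The step I expect to need the most care, rather than being a serious obstacle, is to verify that the bilinear form built this way is the one that arises intrinsically from the twisted compact Calabi--Yau structure of Theorem \ref{Frob}. To handle this I would reinterpret the pairing directly at the spectrum level: the cCY trace on $\cs_P^\bullet(M)$, together with its twisting bimodule $\cs^P_\bullet(M)$ and the Spanier--Whitehead duality between the two string topology spectra from Corollary \ref{frobenius}, produces a duality between $THH_\bullet(\Sigma^\infty(\Omega M_+),\Sigma^\infty(G_+))$ and $THH_\bullet(\Sigma^\infty(\Omega M_+),G^\vee)$ via parts (1) and (3) of Theorem \ref{hochschild}. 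After smashing with $Hk$ and using the orientation of $M$ (and of $\tv$) to identify these two THH spectra up to a shift by $n-d$, one recovers the self-duality, which on Hochschild homology is precisely the stated nondegenerate bilinear form.
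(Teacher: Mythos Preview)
Your proposal is correct and matches the paper's intended argument. The paper does not write out a separate proof of this corollary; it is stated as a consequence of the duality description at the end of Section~5, and your derivation---identify $HH_*(C_*(\Omega M),C_*(G))\cong H_*(\pad;k)$ via Theorem~\ref{hochschild}(1), then use the Frobenius structure from Corollary~\ref{frobenius}/Theorem~\ref{Frob}---is exactly what is meant, while your second paragraph's spectrum-level reinterpretation via the cap-product/evaluation pairing is precisely what the paper spells out at the end of Section~5.
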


\paragraph{Acknowledgments.} The second author would like to thank Arpon Raksit for illuminating conversations.

\section{A twisted homology theory representing Lie group  string topology  }   The goal of this section is to describe Lie group string topology as a twisted generalized homology theory, and to  prove Theorem \ref{main}.   The main issue in proving this theorem is to describe a parameterized form of Atiyah duality.    We begin by recalling the  specific map yielding the Atiyah duality between the Thom spectrum of minus the tangent bundle of a closed manifold $M$, and the Spanier-Whitehead dual of $M$ \cite{atiyahdual}, \cite{cohenatiyah}.

\med
Let $M^n$ be a closed $n$-dimensional manifold and  $e : M \hk \br^k$ be an embedding into Euclidean space with normal bundle  $\eta_e \to M$.   
By the tubular neighborhood theorem, for sufficiently small $\eps > 0$, the open set $\nee \subset \br^k$ consisting of points within a distance of $\eps$ of $e(M)$ can be identified with the total space $\eta_e$.  

Consider the map
\begin{align}\label{alex}
\alpha : \left( \br^k - \nee\right) \times M & \to \br^k - B_\eps (0) \simeq S^{k-1} \notag \\
(v, y)  &\la v - e(y)
\end{align} where $B_\eps (0)$ is the open ball of radius $\eps$.
This map induces the Alexander duality isomorphism
$$
\begin{CD}
 \tilde H_q(\br^k - e(M)) \cong \tilde H_q(\br^k - \nee) @>\cong >> \tilde  H^{k-q-1}(M).
\end{CD}
$$

Atiyah duality \cite{atiyahdual}  is induced by   the same map:
\begin{align}
M^{\eta_e} \wedge M_+ \cong (\br^k \times M )/\left( (\br^k - \nee) \times M \right)  &\la \br^k /( \br^k - B_\eps (0)) \quad \cong S^k  \notag \\
(v, y)   &\la v-e(y).
\end{align}
The adjoint of this map gives a map from the Thom space of $\eta_e$ to the mapping space,
$ 
\alpha : M^{\eta_e}  \la Map(M, S^k)
$
which defines  the Atiyah duality  equivalence of spectra,
\begin{equation}\label{atdual}
\alpha : \mtm \la Map(M, \bs).
\end{equation}  Here this notation refers to the mapping spectrum between the suspension spectrum $\Sigma^\infty (M_+)$ to the sphere spectrum $\bs$.  This is the Spanier-Whitehead dual of $M$, and will  be denoted by $M^\vee$.  
Indeed in \cite{cohenatiyah} the author constructed a symmetric ring spectrum (without unit), $\mtm$.  The  $k^{th}$ space of this spectrum is equivalent, through a range of dimensions that increases with $k$,  to the Thom space $M^{\eta_e}$ and is constructed by allowing the embeddings and the choices of $\eps$ to vary.  The $k^{th}$ space of the mapping spectrum $Map(M,\bs)$ has the homotopy type of $Map(M, S^k)$.  It was shown in \cite{cohenatiyah}
that the map $\alpha$ induces an equivalence of symmetric ring spectra. We refer the reader to \cite{cohenatiyah} for details.    

We now pass to the parameterized setting.  Our goal is to describe a parameterized form of this Atiyah duality equivalence. Let $G \to P \to X$ be a principal bundle with compact Lie group fiber. By the fiberwise duality theorem of May and Sigurdsson (Theorem 15.1.1 of \cite{MS}), the parameterized suspension spectrum $\Sigma^\infty (G_+) \to \Sigma^\infty_X(P^{Ad}_+) \to X$ is (fiberwise) dualizable because each fiber spectrum is dualizable.  This in turn is because every fiber spectrum is equivalent to $\Sigma^\infty (G_+) $,  which is dualizable since $G$ is compact.  The parameterized Spanier-Whitehead dual is what we called $G^\vee \to \cd (\sxp) \to X$ in the introduction. The construction in \cite{MS} is quite general, in this particular case, however,  we will  describe this fiberwise dual   explicitly.   

The spectra we work with will be orthogonal spectra, and when we describe a group action, we use $RO(G)$-indexed orthogonal spectra.  We refer the reader to \cite{MM} for details.

Recall that $\pad = P \times_G G^{Ad}$.  Let $V$ be a finite dimensional orthogonal representation of $G$, and let $S^V = V \cup \infty$ be the one-point compactification where the $G$-action fixes $\infty$. The conjugation action of $G$ on itself defines an action of $G$ on $Map (G, S^V)$,    
\begin{align}\label{gact}
g \cdot \phi : G &\to S^V\\
h &\to g\phi (g^{-1}hg).  \notag 
\end{align}
This defines an $RO(G)$-graded $G$-spectrum, which we call $G^\vee$.

We defined the  parameterized spectrum $\cd (\sxp)$ as an $RO(G)$-graded spectrum.  For a representation $W$, the $W$-space is defined to be 
\begin{equation}\label{ksp}
\cd (\sxp)_W = P \times_G Map(G, S^W)
\end{equation}
which fibers over $X = P/G$ with fiber $ Map(G, S^W) = Map (G, S^k)$, where $k =  dim \, W$.    The fiberwise suspension by a representation $U$ is given by  $\Sigma^U_X( \cd (\sxp)_W )$ is $P\times_G  S^U\wedge(Map(G, S^W))$ and the structure map $ \eps_U : \Sigma^U_X( \cd (\sxp)_W ) \to  \cd (\sxp)_{W\oplus U} $
is induced by the $G$-equivariant map
\begin{align}
\eps_U : S^U \wedge (Map (G, S^W))   &\to Map (G, S^{W\oplus U})\notag \\
\eps_U (t\wedge \phi)(g) &= \phi (g)  \wedge t . \notag
\end{align}  

\med
Notice that since the multiplication map $G \times G \to G$ is equivariant with respect to the adjoint action (the action on $G\times G$ is diagonal),   the induced comultiplication in the Spanier-Whitehead dual spectrum $G^\vee \to G^\vee \wedge G^\vee$ is also equivariant, and so induces a 
weak 
fiberwise coalgebra structure on the parameterized spectrum
$\cd (\sxp)$.  

By a  \sl ``weak fiberwise coalgebra" \rm structure on a parameterized spectrum, we simply mean the following.

\begin{definition}\label{coalgebra} A parameterized spectrum $ E \to \ce \to X$ is a ``weak fiberwise coalgebra" if there is a ``comultiplication" map
$\gamma : \ce \to \ce \wedge_X \ce$  and a ``counit"  $\eta : \ce \to \bs_X$ in the category of parameterized spectra over $X$, that satisfy the usual
co-associativity and co-unit properties up to homotopy.  No coherence conditions on the homotopies are assumed.   Here $\bs \to \bs_X \to X$ is the parameterized sphere spectrum.  Namely the $n^{th}$ space of $\bs_X$ is  $X \times S^n$.
\end{definition}

Notice that given a 
fiberwise coalgebra spectrum $E \to \ce \to X$, then for any object $f : Y\to X$ in $\ct_X$, the twisted homology spectrum
$\ce_\bullet (Y, f) = \ce/X$ is an ordinary 
 coalgebra spectrum.  

\med
The source of the parameterized Atiyah duality map is a parameterized Thom spectrum.  More precisely, let $e : G \subset V$ be an equivariant
embedding of $G$ with its conjugation action into a finite dimensional $G$-representation $V$.  Let $k = dim \, V$.  
  Let $\nee $ be an equivariant tubular neighborhood as above. It is equivariantly diffeomorphic  to the normal bundle $\eta_V\to G$. (We are suppressing the embedding $e$ from the notation.)
We let $\eta_V^{vert} \to \pad$ be the vector bundle
\begin{equation}
\eta_V^{vert} = P \times_G  \eta_V \to P\times_G G^{Ad} = \pad 
\end{equation}
 The fiberwise Thom space of this bundle is 
  homeomorphic to  the fiberwise one-point compactification of the tubular neighborhood,
  $$
  P \times_G  G^{\eta_V} \cong  P \times_G (\nee \cup \infty).
  $$
  Notice also that there is a map from the fiberwise suspension 
  $$
  \eps_W : \Sigma^W_X (P \times_G G^{\eta_V}) = P \times_G  \left(S^W \wedge G^{\eta_V}\right) \xr{\simeq}  P \times_G G^{\eta_{V\oplus W}}
  $$
   This data defines an $RO(G)$-graded parameterized spectrum    $   P \times_G G^{-TG}$ over $X$ whose $W^{th}$ space is $\Omega^{V\oplus W} (P \times_G G^{\eta_{V\oplus W}})$. Here, for a representation $U$,  $\Omega^U$ refers to the $U$-fold loop space, $Map_\bullet (S^U,  - )$.

   Furthermore, the Atiyah duality map described above defines a map $\alpha : G^{\eta_V} \cong  \nee \cup \infty \to Map (G,  S^V)$.      This map   is equivariant, and so defines  Atiyah duality maps $\bar{\alpha_V} : P \times_G G^{\eta_V} \to  P \times_G Map (G, S^V)$.  These maps respect the spectrum structure maps and so prove the following:

\med

\begin{lemma}\label{paramAtiya}

 The maps $\bar{\alpha_k}$ define an equivalence of parameterized spectra over $X$,
   $$
   \bar \alpha : P \times_G G^{-TG} \xr{\simeq}   \cd (\sxp).
   $$
\end{lemma}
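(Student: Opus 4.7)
My plan is to reduce the parameterized claim to the classical, $G$-equivariant Atiyah duality applied via the Borel construction, and then appeal to the fact that a fiberwise equivalence is a weak equivalence of parameterized spectra.

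First, I would verify that the Atiyah duality map $\alpha : G^{\eta_V} \to Map(G, S^V)$ built from the equivariant embedding $e : G \hookrightarrow V$ is $G$-equivariant with respect to the conjugation (adjoint) action on the source and the action on $Map(G, S^V)$ described in (\ref{gact}). Since $e$ is equivariant, the tubular neighborhood diffeomorphism $\nee \cong \eta_V$ is itself $G$-equivariant, and a direct inspection of the adjoint formula $\alpha(v,y)(h) = v - e(h)$ shows that both sides transform consistently under conjugation: acting on $y$ and $h$ simultaneously by the conjugation $\mathrm{Ad}_g$, and on the ambient representation $V$ by $g$, produces exactly the action prescribed in (\ref{gact}).

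Second, I would recall the classical Atiyah duality equivalence $\alpha : G^{-TG} \xrightarrow{\simeq} G^\vee$ from equation (\ref{atdual}). Although the map $\alpha_V$ at a single representation $V$ is only a homotopy equivalence in a range of dimensions that grows with $V$, the induced map of $RO(G)$-indexed orthogonal spectra is a stable equivalence, and, combined with the equivariance of step one, upgrades to an equivalence of $G$-spectra. This matches the equivariant identification $G^{-TG} \simeq \Sigma^{-\mf{g}}(G_+)$ noted in the comments after Theorem \ref{main}.

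Third, I would apply the Borel construction $P \times_G (-)$ to all of the equivariant data, producing parameterized spectra over $X = P/G$: on one side the parameterized Thom spectrum $P \times_G G^{-TG}$ whose $W^{th}$ space is $\Omega^{V \oplus W}(P \times_G G^{\eta_{V \oplus W}})$, and on the other $\cd(\sxp)$ as defined in (\ref{ksp}). The equivariant map $\alpha$ then descends to the desired $\bar{\alpha}$ of parameterized spectra, with compatibility with the structure maps $\eps_W$ inherited from the corresponding equivariance before taking $P \times_G (-)$. Over any $x \in X$, a local trivialization of $P$ identifies the restriction of $\bar{\alpha}$ to the fiber with the classical equivariant equivalence from step two. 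By the theory of parameterized spectra developed by May and Sigurdsson \cite{MS}, a fiberwise equivalence is a weak equivalence in the relevant model structure, completing the proof. The main obstacle is the careful bookkeeping in step one, particularly identifying the $G$-action on $\eta_V$ induced by the equivariance of $e$ with the action that makes $P \times_G \eta_V$ a stabilization of the vertical tangent bundle $\tv \to \pad$; once this is in place, the rest of the argument reduces cleanly to classical equivariant Atiyah duality together with the general machinery of parameterized spectra.
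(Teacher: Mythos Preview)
Your proposal is correct and follows essentially the same approach as the paper: the paper's entire argument is the sentence immediately preceding the lemma, asserting that the Atiyah duality map $\alpha : G^{\eta_V} \to Map(G, S^V)$ is $G$-equivariant, hence descends through the Borel construction to maps $\bar{\alpha}_V$ compatible with the spectrum structure maps. Your proof simply unpacks this with more care---verifying the equivariance formula explicitly and invoking May--Sigurdsson to justify that a fiberwise equivalence is a weak equivalence of parameterized spectra---but the underlying strategy is identical.
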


\med
   The map $\bar \alpha$ therefore defines an equivalence of the generalized twisted homology theories these parameterized spectra represent.  Given an object $f : Y \to X$ in $\ct_X$,  the twisted homology the parameterized spectrum $\cd (\sxp)$ represents is what we called $\cs_\bullet^P(Y, f)$ in the introduction. The twisted homology the parameterized spectrum $P \times_G G^{-TG}$ represents is given by the (ordinary) spectrum $f^*(P)_+ \wedge_G G^{-TG}$, which is the Thom spectrum of the virtual bundle $-f^*(T_{vert})$ over $f^*(P^{Ad})$, where $T_{vert} \to \pad$ is the vertical tangent bundle.  Applying this to $X$ itself, we have the equivalence of spectra,
\begin{equation}\label{-tvert}
\bar{\alpha} : (\pad)^{-T_{vert}}   \xr{\simeq}\cs^P_{\bullet}(X)
\end{equation}

\med
Now recall that given any map $\phi : M \to N$ between closed manifolds, the Pontrjagin-Thom construction defines a map $\tau_g: N^{-TN} \to M^{-TM}$ making the following diagram of spectra homotopy commute:
\begin{equation}\label{atiyahcompat}
\begin{CD}
N^{-TN} @>\tau_g >> M^{-TM}  \\
@V\alpha V\simeq V     @V\simeq V\alpha V \\
N^\vee   @>> g^\vee >  M^\vee
\end{CD}
\end{equation}
 
Applying this to the multiplication map $\mu : G \times G \to G$, we get a homotopy commutative diagram
$$
 \begin{CD}
G^{-TG} @>\tau_\mu >> G^{-TG}\wedge G^{-TG}  \\
@V\alpha V\simeq V     @V\simeq V\alpha V \\
G^\vee   @>> \mu^\vee >  G^\vee \wedge G^\vee
\end{CD}
$$

Given the adjoint action of $G$ on itself, and the diagonal adjoint action of $G$ on $G \times G$,  the multiplication map $\mu : G \times G \to G $ is equivariant. Therefore  there is an induced   fiberwise coproduct  on the parameterized spectrum $P \times_G G^{-TG}$, as there is on 
$ \cd (\sxp)$.  

We now verify that the induced map $  \bar \alpha : P \times_G G^{-TG} \xr{\simeq}   \cd (\sxp)$  preserves these coproducts. 
We do this by studying the definition of the maps involved more carefully.  
  
Toward this end  let $e : G \subset V$ be an equivariant
embedding of $G$ with its conjugation action into a finite dimensional $G$-representation $V$, as above.    Let $k = dim \, V$. We then have an induced composition of equivariant embeddings, 
\begin{equation}
G \times G    \xr{\mu \times e \times e} G \times V \times V \xr{e \times 1 \times 1} V \times V \times V.
\end{equation}

Recall that the tangent bundle of $G$ has an equivariant trivialization $TG \cong G \times \frak{g}$
where $\frak{g}$ is the  Lie algebra with its adjoint action.  Differentiating $e : G \hk V$ at the identity gives a linear equivariant embedding
$\frak{g} \hk V$.  We let $\frak{g}^\perp$ be the orthogonal complement with its induced action.  

The total space of the normal bundle of $G \times V \times V \xr{e \times 1 \times 1} V \times V \times V$ is clearly equivariantly isomorphic to 
$G \times \frak{g}^\perp \times V \times V$.  We perform the Pontrjagin-Thom construction on the induced (equivariant) embedding
of the restriction of the total space 
$$
(\mu \times e \times e)^* (G \times \frak{g}^\perp \times V \times V)    \hk G \times \frak{g}^\perp \times V \times V.
$$
This is a codimension $2k-d$ embedding.  The Pontrjagin-Thom construction gives an equivariant map
$$
\tau_\mu^{V\times V\times V} : G_+ \wedge S^{\frak{g}^\perp} \wedge  S^V \wedge S^V      \la (G\times G)_+  \wedge S^{\frak{g}^\perp} \wedge S^{\frak{g}^\perp} \wedge S^V,
$$ or equivalently, 
$$
\tau_\mu^{V\times V\times V}  : G^{\eta_e} \wedge S^V \wedge S^V      \la   G^{\eta_e} \wedge G^{\eta_e} \wedge S^V.
$$
This defines the map
$$
\tau_\mu^{V\times V\times V}  :  P \times_G \left(G^{\eta_e} \wedge S^V \wedge S^V \right)     \la P \times_G \left( G^{\eta_e} \wedge G^{\eta_e} \wedge S^V\right).
$$ 

Similarly, the Atiyah duality map, which as discussed above is defined via a Pontrjagin-Thom collapse, is an equivariant  map
$$
\alpha : G^{\eta_e} \wedge S^V \wedge S^V   \to Map (G, S^V) \wedge S^V \wedge S^V,  
$$
which induces a map
$$
\alpha : P \times_G \left(G^{\eta_e} \wedge S^V \wedge S^V \right) \la P \times_G \left(Map (G, S^V) \wedge S^V \wedge S^V\right).
$$
The compatibility of these Pontrjagin-Thom maps  yields that the following diagram  commutes:

\begin{equation} 
\begin{CD}
  P \times_G \left(G^{\eta_e} \wedge S^V \wedge S^V \right)    @>\tau_\mu^{V\times V\times V}>>  P \times_G \left( G^{\eta_e} \wedge G^{\eta_e} \wedge S^V\right) \\
  @V\alpha VV     @VV\alpha V  \\
P \times_G \left(Map (G, S^V) \wedge S^V \wedge S^V\right)    @>>\mu^\vee >   P \times_G \, \left(Map (G \times G,  S^V \wedge S^V) \wedge S^V\right).  
\end{CD}
\end{equation}

Passing to spectra,  this says that the following diagram of parameterized spectra over $X$ homotopy  commutes: 
 \begin{equation}
 \begin{CD}
 P \times_G G^{-TG} @>\tau_\mu >> P \times_G G^{-TG}\wedge G^{-TG}  \\
@V\alpha V\simeq V     @V\simeq V\alpha V \\
P\times_G G^\vee   @>> \mu^\vee >  P\times_G (G^\vee \wedge G^\vee)
\end{CD}
  \end{equation}  Or, written with the notation used above, the following diagram of parameterized spectra over $X$ homotopy commutes:
  \begin{equation}
 \begin{CD}
 P \times_G G^{-TG} @>\tau_\mu >> P \times_G G^{-TG} \wedge_X  P \times_G  G^{-TG}  \\
@V\bar \alpha V\simeq V     @V\simeq V\bar \alpha V \\
 \cd (\sxp)  @>> \mu^\vee >   \cd (\sxp) \wedge_X  \cd (\sxp).
\end{CD}
  \end{equation}
  
  In other words,  the induced map  
  $$\alpha : (P^{Ad})^{-\tv}   \xr{\simeq}  \cs^P_\bullet (X)  $$
respects coproducts up to homotopy.
  
  This completes the proof of parts (1) and (2) of Theorem \ref{main}.  Part (3) of Theorem \ref{main} follows from part (1) and  the Thom isomorphism applied to the vertical tangent bundle $T_{vert} \to \pad$.     The algebra structure on $H^*(\pad)$ was discovered first by Gruher   in \cite{gruher}.  The main point of part (1) of Theorem \ref{main} is that it realizes the work of Gruher   on the level of parameterized spectra and the induced twisted homology theory.   In \cite{gruher}  it was shown that in the case of the universal bundle $G \to EG \to BG$,  the algebra structure on $H^*(\pad \simeq LBG)$  (or equivalently the coalgebra structure on $H_*(LBG)$) is isomorphic to the Lie group string topology algebra of Chataur-Menichi \cite{chataurmenichi}.  This completes the proof of Theorem \ref{main}.

\section{ Twisted, compact Calabi-Yau ring spectra and the duality between manifold and Lie group string topology}  

The goal of this section is to study  duality phenomena in the  string topology  of a principal bundle $G \to P \to M$, where $G$ is a compact, $d$-dimensional Lie group, and  $M$ is a closed, $n$-dimensional manifold.  More specifically, our goal is to study   the duality between  the manifold string topology and the Lie group string topology in this setting.    To do this we describe the notion of ``twisted, compact Calabi-Yau ring spectra" and show how the string topology of such a principal bundle has this structure.  This notion is a lifting to the category of spectra, of the notion of  ``Calabi-Yau" algebras and categories as defined by Costello \cite{costello}, Kontsevich and his collaborators \cite{ks} \cite{ kv}, Lurie \cite{lurie}, and the author and Ganatra \cite{cg}.  

Our first result   is the following: 

\begin{theorem}\label{SWdual} For a principal bundle $G \to P \to M$ where $G$ a compact Lie group and $M$ is a closed manifold, the manifold string topology spectrum
$\cs_P^\bullet (M)$ and the Lie group string topology spectrum $\cs^P_\bullet (M)$ are Spanier-Whitehead dual.  Under this duality the ring spectrum structure of $\cs_P^\bullet (M)$ corresponds to the coalgebra structure of $\cs^P_\bullet (M)$.
\end{theorem}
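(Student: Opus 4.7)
The plan is to exploit the fact that the adjoint bundle $\pad$ is itself a closed smooth manifold of dimension $n + d$, fibering as $G \to \pad \xrightarrow{\pi} M$. This fibration yields a short exact sequence of vector bundles over $\pad$,
\begin{equation*}
0 \to \tv \to T\pad \to \pi^* TM \to 0,
\end{equation*}
and hence a stable isomorphism $T\pad \simeq \tv \oplus \pi^* TM$ of virtual bundles.

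Write $D(-)$ for the Spanier--Whitehead dual. Applying Atiyah duality as in \eqref{atdual}, but to the closed manifold $\pad$ in place of $M$, gives for any virtual bundle $V$ on $\pad$ an equivalence $D((\pad)^V) \simeq (\pad)^{-V - T\pad}$. Taking $V = -\pi^* TM$ and using the splitting above yields
\begin{equation*}
D(\cs_P^\bullet(M)) \simeq D((\pad)^{-TM}) \simeq (\pad)^{TM - \tv - \pi^* TM} \simeq (\pad)^{-\tv} \simeq \cs^P_\bullet(M),
\end{equation*}
which is the desired Spanier--Whitehead duality.

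To show that this duality interchanges the product with the coproduct, I would trace both structures back to a common geometric diagram
\begin{equation*}
\pad \xleftarrow{\;\mu\;} \pad \times_M \pad \stackrel{\iota}{\hookrightarrow} \pad \times \pad,
\end{equation*}
where $\mu$ is fiberwise group multiplication and $\iota$ is the natural inclusion with normal bundle $\pi^* TM$. The manifold string topology product on $\cs_P^\bullet(M) = (\pad)^{-TM}$ is the composite $\mu_* \circ \iota^!$, where $\iota^!$ is the Pontrjagin--Thom umkehr for the embedding $\iota$ and $\mu_*$ is the pushforward along the smooth map $\mu$ on the appropriate twisted Thom spectra. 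Dually, the Lie group string topology coproduct on $\cs^P_\bullet(M) = (\pad)^{-\tv}$ is the composite $\iota_* \circ \mu^!$, where $\mu^!$ is the Pontrjagin--Thom umkehr for the fiber bundle $\mu$ and $\iota_*$ is induced by the inclusion $\iota$ together with the identification of $\tv(\pad \times \pad)|_{\pad \times_M \pad}$ with $\tv(\pad \times_M \pad)$.

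Since $\pad$, $\pad \times_M \pad$, and $\pad \times \pad$ are all closed manifolds, one applies Atiyah duality to each and invokes the naturality square \eqref{atiyahcompat} for $\mu$ and for $\iota$. This naturality says that $D(\mu_*) \simeq \mu^!$ and $D(\iota^!) \simeq \iota_*$, so the Spanier--Whitehead dual of the product diagram $\mu_* \circ \iota^!$ is precisely the coproduct diagram $\iota_* \circ \mu^!$, establishing the second claim. The main technical obstacle will be bookkeeping the virtual bundle shifts: one must verify that the three Atiyah duality equivalences (one for each of $\pad$, $\pad \times_M \pad$, and $\pad \times \pad$, built out of the tangent bundle splittings $T\pad = \tv + \pi^* TM$, $T(\pad \times_M \pad) = \tv^{(1)} + \tv^{(2)} + \pi^* TM$, and $T(\pad \times \pad) = \tv^{(1)} + \tv^{(2)} + \pi_1^* TM + \pi_2^* TM$) fit into a coherent commutative diagram compatible with $\mu$ and $\iota$.
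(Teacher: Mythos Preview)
Your argument is correct. The identification of $D((\pad)^{-TM})$ with $(\pad)^{-\tv}$ via Atiyah duality for the closed manifold $\pad$ and the splitting $T\pad \simeq \tv \oplus \pi^*TM$ is exactly what underlies the paper's proof as well, and your treatment of the product/coproduct correspondence via the common diagram $\pad \leftarrow \pad \times_M \pad \hookrightarrow \pad \times \pad$ and the naturality of Atiyah duality with respect to $\mu$ and $\iota$ is sound.

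The route, however, is different in emphasis. The paper's proof is almost entirely a matter of assembling earlier results: it recalls that $\cs^\bullet_P(M) \simeq (\pad)^{-TM}$ via Klein's parameterized Atiyah duality, and then invokes Theorem~\ref{main}, where $\cs^P_\bullet$ was \emph{defined} as the twisted homology of the fiberwise Spanier--Whitehead dual $\cd(\smp)$, with its coalgebra structure fiberwise dual to the ring structure by construction, and identified with $(\pad)^{-\tv}$. So in the paper the duality of algebra and coalgebra structures is essentially tautological at the parameterized level, and the only new input is the one-line remark that $(\pad)^{-TM}$ and $(\pad)^{-\tv}$ are classically Atiyah dual. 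Your approach instead bypasses the parameterized machinery and works directly with the Gruher--Salvatore description $\mu_* \circ \iota^!$ of the product and its Pontrjagin--Thom dual $\iota_* \circ \mu^!$, checking compatibility by hand. This is more self-contained and geometrically explicit, at the cost of the virtual-bundle bookkeeping you flag at the end. One small point worth making precise: your map $\iota_*$ on Thom spectra of \emph{negative} bundles exists because $\iota$ is a proper map and the relevant virtual bundle on $\pad \times_M \pad$ is the restriction of the one on $\pad \times \pad$; on the parameterized side this is just the natural map $(\ce \wedge_M \ce)/M \to (\ce/M)\wedge(\ce/M)$, which is how the paper implicitly passes from a fiberwise coalgebra to an ordinary one.
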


\begin{proof} Recall from
  \cite{cjgauge} (and restated in the introduction above), the manifold string topology $\cs_P^\bullet$  is the twisted cohomology theory  corresponding to the    fiberwise suspension spectrum
$\Sigma^\infty (G_+) \to \Sigma^\infty_M (P^{Ad}_+) \to M$.    Using  particular a version of Poincar\'e duality proven in  by Klein in \cite{klein} (called ``Atiyah duality" in this paper), in \cite{CohenKlein} the first author and  Klein showed that
$$
\cs_P^\bullet (M) = \Gamma_M(\smp) \simeq (P^{Ad})^{-TM},
$$
and the ring structure comes from a generalized cup product in this (twisted) cohomology theory arising from the fiberwise ring structure of this parameterized spectrum.

 Furthermore Theorem \ref{main} above states that the Lie group string topology  $\cs^P_\bullet$ is the twisted homology theory corresponding to the fiberwise Spanier-Whitehead dual spectrum, $G^\vee  \to \cd(\smp) \to M$.  It was also shown that this is   a fiberwise coalgebra spectrum  whose coalgebra structure is (fiberwise) Spanier-Whitehead dual to the ring structure of the parameterized spectrum $ \Sigma^\infty (G_+) \to \Sigma^\infty_M (P^{Ad}_+) \to M$.  Finally it was shown that 
 there is a coproduct-preserving equivalence of   spectra,
$$
 (P^{Ad})^{-\tv}   \xr{\simeq}  \cs^P_\bullet (X) .
$$

 We remark that the fact that the Thom spectra  $(P^{Ad})^{-TM} \simeq \cs_P^\bullet (M)$ and  $(P^{Ad})^{-\tv} \simeq \cs^P_\bullet (M)$ are Spanier-Whitehead dual follows from classical Atiyah duality \cite{atiyahdual}.  
 
 This completes the proof.

\end{proof}

\med
\noindent \bf Remark.  \rm  When  $M$ is oriented, one can apply the two Thom isomorphisms,  $$H_*(\pad) \cong H_{*-n}((P^{Ad})^{-TM}) \quad \text{ and} \quad   H_*(\pad) \cong H_{*-d}((P^{Ad})^{-\tv}).$$   The Spanier-Whitehead duality above then yields a Frobenius algebra structure on $H_*(\pad)$ as discovered by Gruher \cite{gruher}.

\bg

We now strengthen this result by proving that in this situation, i.e a principal bundle $G \to P \to M$, where $G$ is a compact Lie group and $M$ is a closed, smooth manifold, that the spectrum $(\pad)^{-TM}$ is a ``twisted, compact Calabi-Yau ring spectrum".  The notions of Calabi-Yau differential graded algebras or $A_\infty$ algebras or (higher) categories were introduced in \cite{costello}, \cite{lurie}, \cite{ks}, \cite{kv} because of their connections with two-dimensional topological field theories.     This notion can be viewed as a derived version of a Frobenius algebra.  This will be made precise in Proposition \ref{cCYFrob} below.   In this paper we lift these ideas to the category of spectra, where we must deal with ``twisted" versions of these notions in order to get many interesting examples.  We actually introduce two versions of twisted Calabi-Yau ring spectra: a compact version and a smooth version.  This follows the ideas of Kontsevich and his collaborators \cite{ks}, and \cite{kv}, who worked with $A_\infty$ algebras  over a field of characteristic zero,
and of the author and Ganatra  \cite{cg} who  worked with $A_\infty$-algebras or categories over arbitrary fields. 

\med
We begin with the notion of a \sl ``twisted, compact, Calabi-Yau" \rm  ring spectrum.

\bg
\med
Recall that a compact $E_1$- ring spectrum $R$ is one that   is perfect as an $\bs$-module.    

\begin{definition}
A \bf ``twisted, compact Calabi-Yau ring spectrum" \sl (twisted  $cCY$)  \rm of dimension $n$
is a triple $(R, Q,  t)$, where $R$ 
is a compact ring spectrum,  
$Q$ is an $R$-bimodule which is  compact as an $\bs$-module,  and has the same $\bz/2$-homology as $R$:  $$H_*(Q; \bz/2) \cong H_*(R; \bz/2).$$  We refer to $Q$ as the ``twisting" bimodule.   If $Q = R$ we say that $R$ has \sl trivial \rm twisting.
$$t : THH(R; Q)  \to \Sigma^{-n}\bs$$ is a map of   spectra we  call the ``$n$-dimensional \sl trace map" \rm  that has the following duality property:  The pairing defined by the composition
$$
\langle \, , \, \rangle : R \wedge Q  \xr{\mu} Q \hk THH (R; Q) \xr{t} \Sigma^{-n}\bs
$$
is nondegenerate in the sense that the adjoint $R \to \Sigma^{-n}Q^\vee$ is an equivalence of $R$-bimodule spectra.  Here $\mu : R\wedge Q \to Q$ is the module structure, $Q \hk THH (R; Q)$ is the inclusion of the spectrum of zero simplices, and  $Q^\vee$ is the Spanier-Whitehead dual of $Q$, which exists because of the compactness assumption.      
\end{definition}

\med
The following observation is an immediate consequence of the definition.

\begin{proposition}\label{coalg}
Let $(R, Q, t)$ be a twisted compact Calabi-Yau ring spectrum.  Then the duality between $R$ and $Q$ defined by the nondegenerate pairing $\langle \, , \, \rangle $  defines a   coalgebra  structure on the twisting bimodule $Q$,  whose co-product is Spanier-Whitehead dual to the product in the ring structure $R$.
\end{proposition}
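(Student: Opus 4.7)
The plan is to transport the ring structure on $R$ across the Spanier--Whitehead duality produced by the trace, and interpret the result as a coalgebra on $Q$.  The key facts driving this are the compactness hypotheses and the contravariance of Spanier--Whitehead duality.

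First, I would unpack the nondegeneracy of the pairing $\langle -, - \rangle$ to obtain, essentially by definition, the equivalence $\alpha : R \xrightarrow{\simeq} \Sigma^{-n} Q^\vee$ of $R$-bimodule spectra. Since both $R$ and $Q$ are perfect over the sphere spectrum $\bs$, the natural biduality maps $R \to R^{\vee\vee}$ and $Q \to Q^{\vee\vee}$ are equivalences, so taking $\vee$ of $\alpha$ gives the dual equivalence
\begin{equation*}
\alpha^\vee : Q \xrightarrow{\simeq} \Sigma^{n} R^\vee,
\end{equation*}
again of $R$-bimodules (since $\vee$ preserves the bimodule structure, acting by the opposite structure on one side and switching).

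Next, I would dualize the product on $R$. The multiplication $\mu : R \wedge R \to R$ yields $\mu^\vee : R^\vee \to (R \wedge R)^\vee$. Here compactness of $R$ is crucial: it guarantees that the canonical map $R^\vee \wedge R^\vee \to (R \wedge R)^\vee$ is an equivalence, so composing with its inverse gives a coproduct
\begin{equation*}
\Delta_R : R^\vee \longrightarrow R^\vee \wedge R^\vee.
\end{equation*}
Coassociativity of $\Delta_R$ is forced by the associativity of $\mu$ under the contravariant symmetric-monoidal duality $\vee$, and if one wishes a counit it arises by dualizing the unit $\bs \to R$ to $R^\vee \to \bs$.

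Transporting $\Delta_R$ across the equivalence $\alpha^\vee$ yields the desired coproduct on $Q$, namely the composite
\begin{equation*}
\Delta_Q : Q \xrightarrow{\alpha^\vee} \Sigma^n R^\vee \xrightarrow{\Sigma^n \Delta_R} \Sigma^n(R^\vee \wedge R^\vee) \xrightarrow{\Sigma^n(\alpha^{\vee-1} \wedge \alpha^{\vee-1})} \Sigma^{-n}(Q \wedge Q),
\end{equation*}
which is coassociative (with the analogous counit obtained from the trace $t$). By construction $\Delta_Q$ is Spanier--Whitehead dual to $\mu$ in the precise sense that $(\Delta_Q)^\vee$ recovers $\mu$ up to the equivalence $\alpha$, which is what the proposition asserts.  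I do not expect any serious obstacle: the argument is a formal duality calculation, and the only care required is in tracking the suspension shifts and verifying that the bimodule compatibility of $\alpha$ propagates through each step — but this is automatic from the functoriality of $\vee$ on $R$-bimodules.
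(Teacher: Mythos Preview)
Your proposal is correct and simply spells out what the paper leaves implicit: the paper gives no proof for this proposition beyond calling it ``an immediate consequence of the definition,'' and your argument is exactly the standard duality transport one would use to make that remark precise. One small point worth noting is that the coproduct you obtain lands in $\Sigma^{-n}(Q\wedge Q)$ rather than $Q\wedge Q$, so the coalgebra structure carries a degree shift; this is consistent with the paper's usage and with the Frobenius-algebra dimension shifts in the examples, but you may want to flag it explicitly.
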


\med
The main applications of compact Calabi-Yau ring spectra  occur in the presence of orientations. We now define what we mean by this.

\med

\begin{definition} \rm  Let $(R, Q, t)$ be a twisted, $cCY$  ring spectrum of dimension $n$,  and let $E$ be a ring spectrum representing a homology theory $E_*$.  An   \sl ``$E_*$-orientation"  \rm of $(R, Q, t)$ is a pair $(u, \tilde t_E)$, where
$$
u : Q  \wedge E  \xr{\simeq}  R \wedge E
$$ 
is an equivalence of  the $E_*$-homology spectra as $R$-bimodules. Here $R$ acts trivially on $E$.      $$\tilde t_E :  THH(R  \,  R)_{hS^1}\wedge E \to \Sigma^{-n} E $$ is an $E$-module map from the homotopy orbit spectrum of the $S^1$-action induced by the cyclic structure, which factorizes the trace map $t$ in $E$-homology.  That is, the induced trace map   $t_E = t \wedge 1 : THH(R; \, Q)\wedge E  \to \Sigma^{-n}\bs\wedge E$  is homotopic to the composition

\begin{align}
t_E : THH(R; \, Q)\wedge E \xr{u}    THH(R ; \, R)\wedge E  &\xr{project} THH(R; \, R)_{hS^1}\wedge E \\ 
 &\xr{\tilde t_E}  \Sigma^{-n}E.
\end{align}
 
\end{definition}

\med
When $E =Hk$, the Eilenberg-MacLane spectrum for a field $k$, then a twisted, compact Calabi-Yau ring spectrum $(R, Q, t)$ together with an  $Hk$-orientation $(u, \tilde t_{Hk})$ defines a compact Calabi-Yau algebra structure on the singular chains with $k$-coefficients, $C_*(R; k)$, as defined in \cite{kv} and in \cite{cg}.

\med
The following gives a precise relation between twisted $cCY$-ring spectra and Frobenius algebras.

\med
\begin{proposition}\label{cCYFrob}  Let $(R, Q, t) $ be a twisted $cCY$ ring spectrum of dimension $n$, and let $E$ be a ring spectrum representing a homology theory $E_*$  with respect to which $(R, Q, t)$ has orientation $(u, \tilde t_E)$.  Then $R\wedge E$ is a Frobenius algebra over $E$ of dimension $n$.  That is, the pairing
\begin{align}
\langle \, , \, \rangle : (R\wedge E) \wedge (R\wedge E)  &\xr{multiply} R\wedge E \xr{\iota} THH(R\wedge E, R\wedge E) \notag \\
&\xr{project}THH(R\wedge E, R\wedge E)_{hS^1} \xr{\tilde t_E} \Sigma^{-n}E
\end{align}
is a nondegenerate pairing of $E-modules$.    Here $\iota : R\wedge E \hk THH(R\wedge E, R\wedge E)$ is the inclusion of the spectrum of $0-simplices$.   ``Nondegeneracy" means that the adjoint of this pairing,
$$
R\wedge E \to Rhom_E(R\wedge E, \Sigma^{-n}E)
$$
is an equivalence of $E$-modules.
\end{proposition}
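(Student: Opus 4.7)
The strategy is to derive the Frobenius structure on $R \wedge E$ directly from the nondegenerate $cCY$ pairing by smashing with $E$ and using the orientation equivalence $u$ to translate $Q$-coefficients into $R$-coefficients.

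First I would smash the nondegeneracy assumption with $E$. Since $Q$ is compact as an $\bs$-module, smashing with $E$ commutes with Spanier-Whitehead duality in the sense that $Q^\vee \wedge E \simeq Rhom_E(Q \wedge E, E)$. Applying this to the $cCY$ equivalence $R \xrightarrow{\simeq} \Sigma^{-n} Q^\vee$ of $R$-bimodules yields an equivalence
$$R \wedge E \xrightarrow{\simeq} Rhom_E(Q \wedge E, \Sigma^{-n} E)$$
of $E$-modules. Composing with the $E$-linear dual of the orientation equivalence $u : Q \wedge E \xrightarrow{\simeq} R \wedge E$ then produces the candidate equivalence
$$R \wedge E \xrightarrow{\simeq} Rhom_E(R \wedge E, \Sigma^{-n} E),$$
which is exactly the adjoint the proposition demands.

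The second step is to identify this equivalence with the adjoint of the Frobenius pairing stated in the proposition. The pairing extracted from the $cCY$ data, smashed with $E$, is the adjoint of the composite
$$R \wedge Q \wedge E \xrightarrow{\mu \wedge 1} Q \wedge E \hookrightarrow THH(R;Q) \wedge E \xrightarrow{t \wedge 1} \Sigma^{-n} E.$$
Translating $Q$ to $R$ via the $R$-bimodule equivalence $u$ and invoking the defining factorization of $\tilde t_E$, this composite becomes
$$R \wedge R \wedge E \xrightarrow{\mu \wedge 1} R \wedge E \hookrightarrow THH(R;R) \wedge E \to THH(R;R)_{hS^1} \wedge E \xrightarrow{\tilde t_E} \Sigma^{-n} E.$$
I would match this with the Frobenius pairing of the proposition by means of the canonical comparison $THH(R;R) \wedge E \to THH(R \wedge E ; R \wedge E)$ induced by the unit $R \to R \wedge E$ and multiplication in $E$, together with the fact that multiplication in $R \wedge E$ is the smash of the multiplication in $R$ with that in $E$. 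Passing to $S^1$-homotopy orbits yields a commutative square comparing $\tilde t_E$ as defined on $THH(R;R)_{hS^1} \wedge E$ with its incarnation on $THH(R \wedge E ; R \wedge E)_{hS^1}$.

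The principal obstacle I foresee lies in this coherence step: one must check not only that the underlying pairings of spectra agree, but that they agree as pairings of $E$-modules after passage to homotopy orbits, and that the two versions of $\tilde t_E$ are compatible under the THH-comparison map. Once this matching is verified, the required nondegeneracy is immediate, since the adjoint of the Frobenius pairing is by construction identified with the $E$-module equivalence built in the first step from the nondegeneracy of the original $cCY$ pairing. In the oriented examples arising in Theorem \ref{Frob}, the relevant Thom-isomorphism identifications guarantee that the orientation equivalence $u$ genuinely exists, so the plan specializes to recover the classical Frobenius pairing on $E_*(\pad)$.
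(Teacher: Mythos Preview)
Your proposal is correct and follows essentially the same route as the paper: translate the $cCY$ pairing through the orientation equivalence $u$ to identify it with the Frobenius pairing, so that nondegeneracy of the latter is inherited from the former. The paper simply asserts that the Frobenius pairing is homotopic to $(R\wedge E)\wedge(R\wedge E)\xrightarrow{1\wedge u^{-1}}(R\wedge E)\wedge(Q\wedge E)\xrightarrow{\mu}Q\wedge E\hookrightarrow THH(R;Q)\wedge E\xrightarrow{t\wedge 1}\Sigma^{-n}E$ and declares the rest ``easily checked,'' whereas you are more careful in flagging the comparison $THH(R;R)\wedge E\to THH(R\wedge E;R\wedge E)$ and the compatibility of the two incarnations of $\tilde t_E$ as a genuine coherence step to be verified.
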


\begin{proof} It is easily checked from the definition of orientation that the  pairing $\langle \, , \, \rangle$ defined above is homotopic to the composition
$$
 (R\wedge E) \wedge (R\wedge E) \xr {1\wedge u^{-1}}  (R\wedge E) \wedge (Q\wedge E) \xr{\mu} Q \wedge E \hk THH (R; Q)\wedge E \xr{t_E = t\wedge 1} \Sigma^{-n}\bs \wedge E.
 $$
 But this pairing is nondegenerate by the definition of the twisted Calabi-Yau structure.
\end{proof}

\med
We now give two important examples of twisted $cCY$ ring spectra.  

\med
\noindent \bf Example 1.    \rm  The first example shows how ordinary Poincar\'e or Atiyah duality fits the definition of twisted compact Calabi-Yau.  

\med
\begin{proposition}
Let $M$ be a closed $n$-dimensional manifold.  Then its Spanier-Whitehead dual, $M^\vee$, which, by Atiyah duality is equivalent to $\mtm$,   comes naturally equipped with the structure of a twisted $cCY$ ring spectrum of dimension $n$. 
\end{proposition}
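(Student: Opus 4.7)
The plan is to exhibit the triple $(R, Q, t)$ canonically attached to $M$ and then verify each of the axioms. I would take $R = M^\vee \simeq \mtm$ with its commutative ring spectrum structure induced by the diagonal $\Delta : M \to M \times M$, and take the twisting bimodule to be $Q = \Sigma^{-n}\Sigma^\infty(M_+)$, viewed as a symmetric $R$-bimodule via the cap-product action $M^\vee \wedge \Sigma^\infty(M_+) \to \Sigma^\infty(M_+)$ adjoint to $\Delta$. Both $R$ and $Q$ are compact over $\bs$ because $M$ is a finite CW complex. For the trace, I would observe that since $R$ is commutative and $Q$ is a symmetric bimodule, the augmentation $\epsilon : \Sigma^{-n}\Sigma^\infty(M_+) \to \Sigma^{-n}\bs$ induced by the collapse $M \to \ast$ is a cyclically invariant trace, and hence descends through the cyclic bar construction to yield a map $t : THH(R; Q) \to \Sigma^{-n}\bs$.

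With these definitions in hand, the $\bz/2$-homology matching is immediate: the mod $2$ Thom isomorphism applies unconditionally and gives $H_*(R; \bz/2) \cong H_{*+n}(M; \bz/2) \cong H_*(Q; \bz/2)$.

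The nondegeneracy of the pairing $R \wedge Q \xr{\mu} Q \hookrightarrow THH(R;Q) \xr{t} \Sigma^{-n}\bs$ is the heart of the matter. I would argue that this composite agrees, up to the shift by $-n$, with the classical Spanier--Whitehead duality pairing $M^\vee \wedge \Sigma^\infty(M_+) \to \bs$, since cap-product followed by augmentation is precisely the evaluation map whose adjoint is the identity on $M^\vee = F(\Sigma^\infty(M_+), \bs)$. Therefore the adjoint $R \to \Sigma^{-n} Q^\vee$ is an equivalence of $R$-bimodule spectra, which is the required nondegeneracy condition. This construction recovers, in the oriented case, the classical Calabi--Yau structure on $C^*(M)$ via the Poincar\'e duality isomorphism $Q \simeq R$.

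The main obstacle is making these constructions rigid at the level of structured ring spectra: one must work in a good point-set model (say symmetric or orthogonal spectra) so that the cap-product defines a strict $R$-bimodule structure on $Q$ and the trace map genuinely factors through $THH(R;Q)$ rather than only through its spectrum of $0$-simplices. Standard rectification results for ring spectra and their modules, together with the universal property of the cyclic bar construction, should handle these coherence issues, but the compatibilities need to be spelled out carefully in order to cover the non-orientable case, which is precisely the reason a nontrivial twisting bimodule is required here.
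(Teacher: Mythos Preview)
Your choice of $R = M^\vee$ and $Q = \Sigma^{-n}\Sigma^\infty(M_+)$, the $\bz/2$-Thom isomorphism check, and the identification of the resulting pairing with the classical Spanier--Whitehead evaluation are exactly what the paper does. The nondegeneracy argument is also the same.

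The one place where your proposal is shakier than the paper is the construction of the trace $t$. Saying that the augmentation $\epsilon : Q \to \Sigma^{-n}\bs$ is ``cyclically invariant'' and therefore ``descends through the cyclic bar construction'' is not quite right: $THH(R;Q)$ is a geometric realization, hence a colimit, so a map \emph{out} of it is not obtained by descent from the $0$-simplices but rather by specifying compatible maps on every simplicial level $R^{(k)}\wedge Q$. The universal property you seem to be invoking (traces on $Q$ factor through Hochschild homology) is a statement about bilinear forms satisfying $\phi(rq)=\phi(qr)$, and making that precise at the spectrum level is exactly the coherence issue you flag at the end.

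The paper sidesteps this entirely by using functoriality of $THH$ in pairs (ring, bimodule). One takes the ring map $\eps : M^\vee \to \bs$ (dual to $\ast \to M$) together with the $\eps$-linear bimodule map $p : \Sigma^{-n}(M_+) \to \Sigma^{-n}\bs$ (collapse), and these induce
\[
t : THH(M^\vee;\,\Sigma^{-n}(M_+)) \xrightarrow{(\eps,\,p)} THH(\bs;\,\Sigma^{-n}\bs) = \Sigma^{-n}\bs.
\]
No rectification or cyclic-invariance argument is needed; one only has to check that $(\eps,p)$ is a map of (ring, bimodule) pairs, which is immediate. The composite $R\wedge Q \xr{\mu} Q \hookrightarrow THH(R;Q) \xr{t} \Sigma^{-n}\bs$ is then visibly the desuspended duality pairing, as you say. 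The paper also records, as an aside, that $THH(R;Q)^\vee \simeq \Sigma^n LM^{-TM}$, so that $t$ is dual to the unit of the string topology ring spectrum; this gives a nice conceptual picture but is not needed for the proof.
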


 \begin{proof}  The suspension spectrum $\Sigma^\infty (M_+)$ can be viewed as a $M^\vee$ bimodule in the usual way.  Notice that since, by Atiyah duality,  $M^\vee$ is equivalent to $\mtm$, then the Thom isomorphism gives
$$
H_*(\Sigma^\infty (M_+); \bz/2) \cong H_{*-n}(M^\vee ; \bz/2).  
$$
So we let $R = M^\vee$ and let the twisting bimodule $Q = \Sigma^{-n}\Sigma^\infty (M_+)$, which we simply denote $\Sigma^{-n}(M_+)$. 

In order to define the $n$-dimensional  trace map on $THH(R; Q)$, we   first study its homotopy type.  This is a simplicial spectrum of finite type.  That is, for each $k$, the spectrum of $k$-simplicies is a finite spectrum.  For such a simplicial spectrum $\bx_\bullet$ we define its Spanier-Whitehead dual $\bx^\vee$ to be the totalization of the cosimplicial spectrum whose spectrum of $k$-simplices is the Spanier-Whitehead dual $\bx_k^\vee = Map (\bx_k, \bs).$   We then have the following result.
\med
\begin{lemma}  For $M$ a closed $n$-manifold, $R = M^\vee$ and $Q = \Sigma^{-n}(M_+)$, then  the Spanier-Whitehead dual of $THH(R; Q)$
 is given by
 $$
 THH(R; Q)^\vee \simeq \Sigma^n \ltm.
 $$
 \end{lemma}

 \med
 
 \begin{proof}  Note that
 $$
 THH(R;Q)_k = R^{(k)} \wedge Q = (M^k)^\vee \wedge \Sigma^\infty(M_+)\wedge S^{-n}.    
  $$
 Therefore in the cosimplicial spectrum $THH(R; Q)^\vee$, the spectrum of $k$-simplices is given by
 $$
 THH(R;Q)^\vee_k = \Sigma^\infty (M^k_+)\wedge M^\vee \wedge S^{n}.
 $$
 The coface maps are determined by the coalgebra structure of $\Sigma^\infty (M_+)$ defined by the diagonal map of $M$, as well as the bi-comodule structure of $M^\vee$, which up to homotopy can be described by   the maps
 $$
 M^\vee \simeq \mtm \to M_+ \wedge \mtm  \simeq M_+ \wedge M^\vee  \quad \text{and} \quad M^\vee \simeq \mtm \to \mtm \wedge M_+ = M^\vee \wedge M_+.
 $$
 These maps are   the  maps of Thom spectra induced by the diagonal $M \to M \times M$.   This cosimplicial spectrum is the $n$-fold suspension of the cosimplicial spectrum studied in \cite{cohenjones} where it was shown to have totalization equivalent to $\ltm$. 
 \end{proof}

 \med
\noindent {\bf Remark.} Notice that the inclusion of the spectrum of zero simplices,
$$
\Sigma^{-n}(M_+) \hk THH(R; Q)
$$
is Spanier - Whitehead dual to the map
$$
\Sigma^n \ltm \xr{eval.} \Sigma^n\mtm \simeq \Sigma^n M^\vee
$$ 
induced on Thom spectra by the usual evaluation fibration $LM \to M$.

 \med
 One way of thinking of the $n$-dimensional trace map $t : THH(R; Q) \to \Sigma^{-n}\bs$ is that it is Spanier-Whitehead dual to the $n$-fold suspension of the unit map in the ring structure of $\ltm$:
 $$
 \Sigma^n \bs \to\Sigma^n \ltm.
 $$
 More concretely notice that the augmentation map of $R$,
$$\eps : R = M^\vee \to \bs$$ and the map induced by sending all of $M$ to the non-base point  $$p: \Sigma^{-n}(M_+) \to \Sigma^{-n}\bs$$ define a map 
$$
t : THH(R; Q) = THH (M^\vee; \Sigma^{-n}(M_+)) \xr{(\eps, \, p)} THH(\bs; \Sigma^{-n}\bs) = \Sigma^{-n}\bs.
$$
The reader can now check that the composition
$$
M^\vee \wedge \Sigma^{-n}(M_+) \xr{\mu} \Sigma^{-n}(M_+) \to THH (M^\vee; \Sigma^{-n} (M_+)) \xr{t}  \Sigma^{-n}\bs
$$
is simply the $n$-fold desuspension of the duality map, and therefore is nondegenerate.  This proves that $(M^\vee, \Sigma^{-n}(M_+), t)$ is twisted, compact Calabi-Yau ring spectrum of dimension $n$.  \end{proof}

\med
We now consider orientations.  Let $E$ be any ring spectrum representing a  generalized homology theory  with respect to which $M$ is oriented.  The Thom isomorphism then defines
an equivalence

$$
u : \Sigma^{-n}(M_+)  \wedge E  \xr{\simeq}  \mtm \wedge E \simeq M^\vee \wedge E
$$ which is clearly an equivalence of $M^\vee$-bimodules.
Again consider the augmentation map $\eps : M^\vee \to \bs$.  Now  the orientation induces a Thom class map $\tau : M^\vee \simeq \mtm \to \Sigma^{-n}E$.  These maps  define a composition  
$$
 \tilde t_E:  THH(M^\vee; M^\vee)_{hS^1}\wedge E \xr{(\eps, \tau)} THH(\bs; \bs)_{hS^1}\wedge  \Sigma^{-n}E  \simeq \Sigma^{-n}(BS^1_+)\wedge E  \xr{p\wedge 1} \Sigma^{-n} E
 $$
 where $p :   BS^1_+ \to S^0$ is    the projection map.

We leave it to the reader to check that the composition 
$$
THH (M^\vee, \Sigma^{-n} (M_+))\wedge E \xr{u}  THH (M^\vee, M^\vee) \wedge E \xr{projection} THH (M^\vee, M^\vee)_{hS^1}  \wedge E  \xr{\tilde t_E} \Sigma^{-n}E
$$
is equivalent to $t\wedge 1 : THH (M^\vee, \Sigma^\infty (M_+))\wedge E  \to  \Sigma^{-n}\wedge \bs \wedge E$.   This proves that
 the pair $(u, \tilde t_E)$  defines an orientation of the twisted $cCY$ structure on $M^\vee$ with respect to $E$.  

\med
\noindent {\bf Remark.}     The above discussion  together with Proposition \ref{cCYFrob} implies that if $M^n$ is an oriented closed manifold, $M^\vee \wedge H\bz$ is a Frobenius algebra over the Eilenberg-MacLane spectrum $H\bz$.  Using the Atiyah duality equivalence
$M^\vee \simeq \mtm$ we see that $\mtm \wedge H\bz \simeq \Sigma^{-n}(M_+ \wedge H\bz)$ is a Frobenius algebra.  The multiplication
reflects the classical intersection product on the level of chains,  $C_{*+n}(M; \bz)$.  The comultiplication comes from the diagonal, $M \to M\times M$.

\med
\noindent \bf Example 2.  \rm The following example supplies the main ingredient for the proof of Theorem \ref{Frob} as stated in the introduction.

\med
\begin{proposition}\label{ptmcy}
Let $G \to P \to M$ be a principal bundle where $G$ is a compact Lie group of dimension $d$ and $M$ is a closed manifold of dimension $n$.     Then the manifold string topology ring spectrum $R = \cs^\bullet_P(M) \simeq (\pad)^{-TM}$ naturally admits the structure of  a twisted, compact Calabi-Yau ring spectrum of dimension $n-d$. 
\end{proposition}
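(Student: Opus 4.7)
The plan is to realize the twisted $cCY$ triple $(R,Q,t)$ as follows. Take $R = \cs^\bullet_P(M) \simeq (\pad)^{-TM}$, and take the twisting bimodule to be a shift of the Lie group string topology spectrum,
$$ Q \; = \; \Sigma^{d-n}\cs^P_\bullet(M) \; \simeq \; \Sigma^{d-n}(\pad)^{-\tv}.$$
The crux of the argument is to identify $Q$ with a shift of the Spanier--Whitehead dual of $R$. For this I would apply classical Atiyah duality on the closed $(n+d)$-dimensional manifold $\pad$: for a closed $X$ and virtual bundle $\xi$ one has $D(X^\xi) \simeq X^{-\xi - TX}$. Choosing a connection on $P$ yields an orthogonal splitting $T\pad \cong \pi^*TM \oplus \tv$, where $\pi : \pad \to M$ is the bundle projection, and hence
$$ R^\vee \; = \; D((\pad)^{-TM}) \; \simeq \; (\pad)^{TM - T\pad} \; \simeq \; (\pad)^{-\tv}, $$
so that $Q \simeq \Sigma^{d-n} R^\vee$.

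Granting this identification, the remaining ingredients of the definition are routine. Compactness of both $R$ and $Q$ as $\bs$-modules is automatic since they are Thom spectra of finite-dimensional virtual bundles over the compact manifold $\pad$. The $\bz/2$-homology condition $H_*(Q;\bz/2) \cong H_*(R;\bz/2)$ reduces, via two Thom isomorphisms, to the tautology $H_{*+n}(\pad;\bz/2) \cong H_{*+n}(\pad;\bz/2)$ once the $\Sigma^{d-n}$ shift is accounted for. The $R$-bimodule structure on $Q$ is transported from the canonical bimodule structure on $R^\vee$. For the trace map, I would invoke the general construction available for any compact ring spectrum: the Spanier--Whitehead evaluation pairing $R \wedge R^\vee \to \bs$ is $R$-bilinear (with $\bs$ carrying the trivial bimodule structure) and extends canonically to an augmentation $t_R : THH(R; R^\vee) \to \bs$ of the cyclic bar construction; desuspending by $n-d$ gives the required $t : THH(R;Q) \to \Sigma^{d-n}\bs$. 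Nondegeneracy of the resulting pairing is then automatic: under the Atiyah duality identification the adjoint $R \to \Sigma^{d-n} Q^\vee$ is literally the equivalence $R \xr{=} R$.

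The hard part will be verifying that the Atiyah-duality identification $R^\vee \simeq (\pad)^{-\tv}$ is genuinely an equivalence of $R$-bimodule spectra, and not merely of underlying $\bs$-modules. The bimodule structure on $R^\vee$ is dictated by the ring structure on $R$, which in turn was constructed from the fiberwise multiplication on $\Sigma^\infty_M(\pad_+)$ together with the Atiyah--Klein duality for $M$. Theorem \ref{SWdual} already records compatibility at the level of the underlying pairing---the product on $R$ corresponds to the coproduct on $(\pad)^{-\tv}$---but promoting this to a bimodule equivalence will require tracing both structures through the Pontrjagin--Thom constructions used to define them, in the same spirit as the diagram chase carried out in the proof of Theorem \ref{main}. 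Once this compatibility is established, assembling the triple $(R,Q,t)$ into the twisted $cCY$ structure of dimension $n-d$ is a direct verification of the axioms.
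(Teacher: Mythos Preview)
Your proposal is correct and follows essentially the same outline as the paper: same $R$, same $Q = \Sigma^{d-n}\cs^P_\bullet(M)$, and the bimodule structure on $Q$ is \emph{defined} to be the dual of $R$ acting on itself---so the ``hard part'' you flag is actually vacuous in both approaches (the geometric identification with $(\pad)^{-\tv}$ as a coalgebra is already Theorem~\ref{SWdual}).

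The one genuine difference is the construction of the trace map.  The paper builds it geometrically: the map of principal bundles from $P$ to the trivial-group bundle $M = M$ induces a ring map $\eps : R \to M^\vee \to \bs$ and a bimodule map $u : Q \to \Sigma^{d-n}(M_+) \to \Sigma^{d-n}\bs$, and then $t = THH(\eps, u)$.  You instead appeal to the universal trace $THH(R; R^\vee) \to \bs$ that exists for any compact ring spectrum.  Your justification (``the evaluation pairing is $R$-bilinear with $\bs$ trivial'') is phrased a bit loosely---there is no augmentation on a general compact $R$, so $\bs$ is not an $R$-bimodule---but the construction is valid: the map $\mathrm{ev}_1 : R^\vee \to \bs$ dual to the unit coequalizes the two face maps $R \wedge R^\vee \rightrightarrows R^\vee$, and more generally $f_k(\phi, r_1, \ldots, r_k) = \phi(r_1 \cdots r_k)$ defines a map of simplicial spectra to the constant object $\bs$.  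Both constructions yield the same pairing $R \wedge Q \to \Sigma^{d-n}\bs$ (the Spanier--Whitehead duality pairing), so nondegeneracy holds in either case.  Your route is cleaner and more general; the paper's route has the virtue of making the geometric origin of the trace visible.
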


\begin{proof}
We need to produce the twisting module $Q$ and a trace map $t : THH (R; Q) \to \Sigma^{d-n}\bs$.     For the twisting module we take the Lie group string topology spectrum $Q = \Sigma^{d-n}\cs^P_\bullet (M)  \simeq \Sigma^{d-n}(\pad)^{-\tv}$.  The fact that $R$ and $Q$   have isomorphic mod-$2$ homology follows from the Thom isomorphism.   The fact that $Q$ is indeed an $R$-bimodule follows from the Spanier-Whitehead duality of $R =  \cs^\bullet_P(M) $ and $\Sigma^{n-d}Q = \cs^P_\bullet (M)$ established in  Theorem \ref{SWdual}, reflecting Gruher's work \cite{gruher}.  The bimodule structure of $Q$ over $R$ is then the dual of the bimodule structure of $R$ over itself.

Notice also that $R = \cs^\bullet_P (M) \simeq  (\pad)^{-TM}$ has an augmentation $\eps : R \to \bs$.  To see this consider the following diagram, which we view as a map of principal bundles.  The right vertical sequence is thought of as a principal bundle with trivial group.
$$
\begin{CD}
G   @>>>   \{id \}  \\
@VVV    @VVV \\
P @>>> M \\
@VVV    @VV=V \\
M    @>>=>  M
\end{CD}
$$

This defines a map of twisted cohomology  ring spectra
$$
\cs_P^\bullet (M)    \la  \cs_M^\bullet (M)
$$ or equivalently,
$$  
\Gamma_M(\Sigma^\infty_M(\pad_+))    \la Map(\Sigma^\infty (M_+), \bs) = M^\vee. 
$$
The augmentation is then given by
$$
\eps : R = \cs_P^\bullet (M) \to M^\vee \to \bs
$$
 where the second map in this composition is  the augmentation of $M^\vee \to \bs$.  
 
 Notice that the above diagram also defines a map of bimodules,
     $$
   Q =  \Sigma^{d-n}\cs^P_\bullet (M)  \simeq \Sigma^{d-n}(\pad)^{-\tv}     \to  \Sigma^{d-n}\cs^M_\bullet (M) = \Sigma^{d-n} (M_+).
    $$
    Composing this map with the projection $p : \Sigma^{d-n}(M_+) \to \Sigma^{d-n} \bs$
    defines a map $u :  Q =  \Sigma^{d-n} \cs^P_\bullet (M)  \to \Sigma^{d-n}\bs$.       Putting these maps together gives a map of topological Hochschild homologies,
    $$
    t : THH(R, Q) \xr{(\eps, u)} THH(\bs, \Sigma^{d-n}\bs) = \Sigma^{d-n}\bs.
    $$

    We leave it to the reader to verify that the pairing defined by the  composition
 \begin{equation}\label{pairing}
  \langle \, , \, \rangle :   R\wedge Q = \cs_P^\bullet (M) \wedge \Sigma^{d-n}\cs^P_\bullet (M) \xr{\mu} Q = \Sigma^{d-n}\cs^P_\bullet (M) \hk THH(R, Q) \xr{t} \Sigma^{d-n}\bs
    \end{equation}
    is the duality map given by Theorem \ref{SWdual} above.  It is therefore nondegenerate.  This proves that the triple
    $(\cs_P^\bullet (M) ,  \Sigma^{d-n}\cs^P_\bullet (M), t)$ is a twisted compact Calabi-Yau ring spectrum of dimension $n-d$. 
    
    \end{proof}
    
    \med
    Notice that Propositions \ref{ptmcy}, \ref{coalg}, and \ref{cCYFrob} imply both Corollary \ref{frobenius} and Theorem \ref{Frob} as stated in the introduction.

\section{Gauge symmetry}

In this section we continue considering a principal bundle $G \to P \xr{p} M$, where $G$ is a compact Lie group of dimension $d$, and $M^n$ is a closed manifold of dimension $n$. 

Recall that the \sl gauge group \rm $\cg (P)$ of the bundle $P$ is the group of $G$-equivariant bundle automorphisms of $P$ living over the identity of $M$.   Said another way,  let $G \to \ca ut^G (P) \to M$ be the fibration whose fiber over $x \in M$ is the group of $G$-equivariant  automorphisms of the fiber $p^{-1}(x)$.   This bundle is a fiberwise group, and the gauge group is the group of sections
$$
\cg (P) = \Gamma_M(\ca ut^G(P)).
$$
Now a standard exercise shows that the bundle $\ca ut^G(P)$ is isomorphic to the adjoint bundle $G \to \pad \to M$.  Thus we may identify
$$
\cg (P) = \Gamma_M(\pad).
$$

In \cite{cjgauge} a fiberwise stabilization map was defined and studied:
\begin{equation}\label{stab1}
\rho : \Sigma^\infty (\cg (P)_+) = \Sigma^\infty (\Gamma_M(\pad)) \to \Gamma_M(\Sigma^\infty(\pad_+)) \simeq (\pad)^{-TM} = \cs^\bullet_P(M).
\end{equation}
$\rho$ is a map of ring spectra and also defines a map to the group of units of the (manifold) string topology ring spectrum
\begin{equation}\label{stab2}
\rho : \cg (P) \to GL_1(\cs^\bullet_P(M)).
\end{equation}
In \cite{cjgauge} this map was studied and computed in several important cases.  Now recall from Proposition \ref{ptmcy} that in the twisted compact Calabi-Yau structure of $ \cs^\bullet_P(M)$, the twisting bimodule is given by a suspension of the Lie group string topology spectrum,  $Q = \Sigma^{d-n}\cs^P_\bullet (M)  \simeq \Sigma^{d-n}(\pad)^{-\tv}$.  In particular   the Lie group string topology spectrum  $\cs^P_\bullet (M)$  inherits a coalgebra structure.  One of the  goals of this section is to show that there is a similarly defined and compatible gauge symmetry on this spectrum.  We also show how these actions are related, and describe different perspectives on this action.   We then compute two  examples of this gauge symmetry.

\med
\begin{theorem}\label{gaugesym} The twisting bimodule structure on the Lie group string topology spectrum,  $Q = \Sigma^{d-n}\cs^P_\bullet (M) $ has a natural action of the gauge group $\cg (P)$. That is, $\Sigma^{d-n}\cs^P_\bullet (M) $ is a module spectrum over the ring spectrum $\Sigma^\infty (\cg (P)_+)$.  Furthermore this action is compatible with the gauge symmetry on the manifold string topology spectrum $R = \cs^\bullet_P (M)$ via its twisted Calabi-Yau duality  pairing
$$
\langle \, , \, \rangle : R\wedge Q = \cs^\bullet_P (M) \wedge \Sigma^{d-n}\cs^P_\bullet (M)  \to \Sigma^{d-n}\bs
$$
as defined in the proof of Proposition \ref{ptmcy} (see Equation \ref{pairing}). That is, the adjoint equivalence
$$
 \cs^\bullet_P (M)  \xr{\simeq} \cs^P_\bullet (M)^\vee
$$
is equivariant with respect to the gauge  symmetry of  these spectra.
\end{theorem}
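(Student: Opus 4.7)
The plan is to build the gauge action on $Q = \Sigma^{d-n}\cs^P_\bullet(M)$ using the fact that the adjoint bundle $\pad$ carries a fiberwise group structure, and then to verify equivariance of the twisted Calabi-Yau pairing by reducing it to associativity at the fiberwise level.

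First, I would construct the action. Recall $\pad \cong \ca ut^G(P)$ is a fiberwise group over $M$ whose group of sections is $\cg(P)$. Fiberwise left multiplication
$$
\mu : \pad \times_M \pad \to \pad
$$
is a bundle map that preserves the vertical tangent bundle, and it promotes the parameterized ring spectrum $\Sigma^\infty_M(\pad_+)$ to a left module over itself in the fiberwise sense. Dualizing fiberwise, it equips $\cd(\Sigma^\infty_M(\pad_+))$ with the structure of a fiberwise left module over $\Sigma^\infty_M(\pad_+)$. Applying the section functor $\Gamma_M(-)$ and using the identifications
$$
\Gamma_M(\Sigma^\infty_M(\pad_+)) \simeq (\pad)^{-TM} = \cs^\bullet_P(M), \qquad \Gamma_M(\cd(\Sigma^\infty_M(\pad_+))) \simeq \cs^P_\bullet(M)
$$
(the second by Theorem \ref{main}) produces a pairing
$$
\cs^\bullet_P(M) \wedge \cs^P_\bullet(M) \to \cs^P_\bullet(M).
$$
Composing with the fiberwise stabilization map $\rho : \Sigma^\infty(\cg(P)_+) \to \cs^\bullet_P(M)$ of \eqref{stab1} from \cite{cjgauge} then exhibits $Q = \Sigma^{d-n}\cs^P_\bullet(M)$ as a module spectrum over $\Sigma^\infty(\cg(P)_+)$, proving the first assertion of the theorem.

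Next, I would verify compatibility with the twisted Calabi-Yau pairing. By the construction of $\langle \, , \, \rangle$ in the proof of Proposition \ref{ptmcy} (see \eqref{pairing}), the pairing $R \wedge Q \to \Sigma^{d-n}\bs$ factors as the multiplication map $R \wedge Q \to Q$, followed by the inclusion $Q \hookrightarrow THH(R;Q)$, followed by the trace $t$. The module multiplication appearing here is exactly the one used above to define the $\cg(P)$-action on $Q$. Consequently, for $g \in \cg(P)$, $a \in R$, and $b \in Q$, the associativity of the fiberwise multiplication on $\pad$ (passed through sections and the trace) yields
$$
\langle \rho(g) \cdot a, \, b \rangle \; = \; \langle a, \, g \cdot b \rangle,
$$
which is precisely the statement that the adjoint equivalence $\cs^\bullet_P(M) \xrightarrow{\simeq} \cs^P_\bullet(M)^\vee$ of Theorem \ref{SWdual} intertwines the two gauge actions.

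The main technical obstacle is arranging that these constructions all live in a single coherent model for parameterized spectra, e.g.\ the $RO(G)$-indexed orthogonal spectra used in Section 1, so that the fiberwise module structure on $\cd(\Sigma^\infty_M(\pad_+))$, the Pontrjagin-Thom collapse and augmentation producing the trace map in Proposition \ref{ptmcy}, and the stabilization map \eqref{stab1} fit together on the point-set level. Once this bookkeeping is set up, the equivariance of the pairing reduces to a diagram chase reflecting associativity of $\mu : \pad \times_M \pad \to \pad$, and the theorem follows.
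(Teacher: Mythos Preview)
Your overall strategy---exploit the fiberwise module structure and then check compatibility via the duality pairing---is the same as the paper's, but there is a genuine error in the first step. You write
\[
\Gamma_M\bigl(\cd(\Sigma^\infty_M(\pad_+))\bigr) \simeq \cs^P_\bullet(M),
\]
citing Theorem \ref{main}. This identification is wrong: $\cs^P_\bullet(M)$ is the twisted \emph{homology} spectrum, which is the quotient $\cd(\Sigma^\infty_M(\pad_+))/\sigma(M)$, not the spectrum of sections. Indeed, by Klein's parameterized duality applied to the fiberwise dual, the section spectrum $\Gamma_M(\cd(\Sigma^\infty_M(\pad_+)))$ is equivalent to $(\pad)^{-\tv - p^*TM} \simeq (\pad)^{-T\pad} \simeq (\pad)^\vee$, whereas $\cs^P_\bullet(M) \simeq (\pad)^{-\tv}$. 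These differ by a twist by $-TM$. Consequently, your proposed pairing, obtained by applying $\Gamma_M$ to the fiberwise module structure, does not land on $\cs^P_\bullet(M)$, and the argument as written does not produce the required $\cg(P)$-action on $Q$.

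The paper avoids this by working directly with the quotient model: the $k$th space of $\cs^P_\bullet(M)$ is $P_+ \wedge_G Map(G,S^k)$, and the $\cg(P)$-action is defined levelwise by $\phi\cdot(y,\theta) = (y, h\cdot\theta)$, where $h\in G$ is determined by $\phi(p(y))$. One checks this is well-defined, fixes the section $\sigma(M)$, and is compatible with the structure maps. The compatibility with the Calabi--Yau pairing then follows because this action is, by construction, the Spanier--Whitehead dual of the stabilization action $\rho$ on $\cs^\bullet_P(M)$. Your approach can be repaired along these lines: the fiberwise module structure on $\cd(\Sigma^\infty_M(\pad_+))$ over $\Sigma^\infty_M(\pad_+)$ does yield an action of $\Gamma_M(\Sigma^\infty_M(\pad_+))$ on the \emph{quotient} $\cd(\Sigma^\infty_M(\pad_+))/M$ (a section acts by fiberwise multiplication, which is a map of parameterized spectra and hence descends to the quotient), but you need to say this rather than invoke $\Gamma_M$ on both factors.
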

\begin{proof}  Recall that $\cs^P_\bullet (M)$ is the generalized homology associated to the parameterized spectrum
$$
G^\vee \to \cd (\Sigma^\infty_M(\pad_+)) \to M.
$$
We may take $\cd (\Sigma^\infty_M(\pad_+))$ to be the parameterized spectrum whose $k^{th}$ space over $M$ is the fibration
$$
Map (G, S^k) \to P \times_G Map (G, S^k) \to M
$$
where the action of $G$ on $Map (G, S^k)$ is the dual of the adjoint action as described in (\ref{gact}). This is because the spaces $Map(G,S^k)$ with this action form the underlying naive $G$-spectrum of $G^\vee$, on which the homotopy theory of $G^\vee$ as a $\Sigma^\infty(G_+)$-module is determined.

This fibration has a canonical section
$$ \sigma : M = P \times_G point = P \times_G \eps \hk P \times_G Map (G, S^k).$$ Here $\eps : G \to S^k$ is the constant map at the basepoint
$(1, 0, \cdots, 0) \in S^k$.  Then the $k^{th}$-space of the generalized homology spectrum 
$$
\cs^P_\bullet (M)   =  \cd (\Sigma^\infty_M(\pad_+)) / \sigma (M)
$$ is given by
$$
P_+\wedge_G Map (G, S^k).
$$
The structure maps are given by
$$
\Sigma (P_+\wedge_G Map (G, S^k)) = P_+\wedge_G \Sigma(Map (G, S^k))  \xr{1 \wedge s} P_+\wedge_G Map (G, S^{k+1})
$$
where $s : \Sigma(Map(G, S^k)) \to Map(G, \Sigma S^k)$ is given by $s(t, \phi)(g) = t\wedge\phi (g)$.

Now   the bundle $p : P \times_G Map (G, S^k) \to M$  is $\cg (P)$-equivariant with respect to the following action.   Let $\phi \in \cg (P) = \Gamma_M(\pad)$, and let $(y, \theta) \in P \times Map (G, S^k)$ represent an element in  $P \times_G Map (G, S^k)$.  Then
\begin{equation}\label{act}
\phi \cdot (y, \theta) = (y, h\cdot \theta)
\end{equation}
where $h \in G$ is the unique element so that $\phi (p(y, \theta)) \in P \times_G G^{Ad}$ is represented by $(y, h) \in P \times G$.    

The reader can check that this action is well-defined, and that the section $\sigma(M = P \times_G\eps)$ consist of fixed points of this action.
It therefore descends to a $\cg (P)$-action on $P_+\wedge_G Map (G, S^k)$.  

These actions (one for each $k$) clearly respect the structure maps and therefore defines an action of $\cg (P)$ on the spectrum $\cd (\Sigma^\infty_M(\pad_+)) / \sigma (M) = \cs^P_\bullet (M)$.   

Now as seen in Corollary \ref{frobenius} the Lie group string topology spectrum $\cs^P_\bullet (M)$ is Spanier-Whitehead dual to the manifold string topology spectrum $\cs_P^\bullet (M) = \Gamma_M(\Sigma^\infty_M(\pad_+))$. The action of the gauge group on this ring spectrum is given by the stabilization representation (\ref{stab1}), (\ref{stab2}), and it is immediate that the gauge symmetry defined on the Lie group string topology spectrum $\cs^P_\bullet (M)$ in (\ref{act}) is the dual action.  This implies that with respect to the twisted Calabi-Yau duality  pairing
$$
\langle \, , \, \rangle : R\wedge Q = \cs^\bullet_P (M) \wedge \Sigma^{d-n}\cs^P_\bullet (M)  \to \Sigma^{d-n}\bs
$$
then the corresponding adjoint equivalence, 
$$
 \cs^\bullet_P (M)  \xr{\simeq} \cs^P_\bullet (M)^\vee
$$
is equivariant with respect to the gauge  symmetry of  these spectra.
\end{proof}

\bg
We now study examples of this gauge symmetry and describe this symmetry from different perspectives.

\med
\noindent {\bf Example 1}.  Consider the $U(1)$ Hopf  bundle
$$
U(1) \to P= S^{2n+1} \to \bc\bp^n.
$$

Since $U(1)$ is abelian, the adjoint bundle $\pad$ is trivial, $U(1) \to \bc\bp^n \times U(1) \to \bc\bp^n.$  Therefore the gauge group is given by the mapping group,
$$
\cg (P) = Map (\bc\bp^n, U(1)).
$$

Also,  the fiberwise suspension spectrum $\Sigma^\infty (U(1)_+) \to \Sigma^\infty_{\bc\bp^n}(\pad_+) \to \bc\bp^n$ is given by the trivially parameterized spectrum
$$
\Sigma^\infty (U(1)_+) \to \bcp^n_+ \wedge \Sigma^\infty (U(1)_+) \to \bcp^n.
$$
(By the ``trivially parameterized spectrum   $ \bcp^n_+ \wedge \Sigma^\infty (U(1)_+)$" we mean the parameterized spectrum whose $k^{th}$ space is the trivial fibration $ \bcp^n \times \Sigma^k(U(1)_+) \to \bcp^n$.)

The twisted cohomology theory this parameterized spectrum represents is therefore actually untwisted, and so the defining spectrum of sections is the mapping spectrum,
$$
R = \cs^\bullet_P(\bcp^n) = Map(\Sigma^\infty(\bcp^n), \Sigma^\infty(U(1)_+)).
$$  This is an $E_\infty$- ring spectrum because the source of the mapping spectrum is an $E_\infty$- coalgebra spectrum and the target is an $E_\infty$-ring spectrum.

The action of the gauge group $\cg (P) = Map (\bc\bp^n, U(1))$ is then given by the map of ring spectra
\begin{equation}\label{stab}
\Sigma^\infty (\cg (P)_+) = \Sigma^\infty (Map (\bc\bp^n, U(1))_+) \xr{\sigma} Map(\Sigma^\infty(\bcp^n), \Sigma^\infty(U(1)_+)) = \cs^\bullet_P(\bcp^n)  = R
\end{equation}
where $\sigma$ is the obvious stabilization map.  The role of stabilization in understanding gauge symmetry on manifold string topology
spectra was studied in general in \cite{cjgauge}.   

Now consider the gauge symmetry on the bimodule $Q = \Sigma^{1-2n}\cs^P_\bullet(\bcp^n) = \Sigma^{1-2n}(S^{2n+1}_+\wedge_{U(1)}U(1)^\vee)$
where the action of $U(1)$ on the Spanier-Whitehead dual $U(1)^\vee$ is  (the dual of) the conjugation action.  Again, since $U(1)$ is abelian this action is trivial, so
 $$
 Q = \Sigma^{1-2n}(\bcp^n_+ \wedge U(1)^\vee).
 $$
 Of course $U(1) \cong S^1$, so $U(1)^\vee \simeq \Sigma^\infty (S^{-1} \vee S^0).$    By Spanier-Whitehead duality, the action of the gauge group $\cg (P)$ is given by composing the stabilzation map described above (\ref{stab})
 $$
 \sigma : \Sigma^\infty (\cg (P)_+) \to R
 $$
 with the $R$-bimodule action on the desuspension of its dual, $Q$, as described  in the proof of Proposition \ref{ptmcy}.

\bg
Before we move on to another example, we consider the action of the gauge group on the level of Thom spectra.
The point being that the Calabi-Yau ring spectrum in question is $R \simeq (\pad)^{-TM}$ and the twisting bimodule is $Q \simeq \Sigma^{d-n}(\pad)^{-\tv}$ are both Thom spectra.  To understand the induced gauge symmetry on these Thom spectra, we  first observe that the gauge group actually acts on the space $\pad$, and the actions on the Thom spectra are induced from it. 

Let $G \to P \xr{p} M$ be a principal bundle with $M$ a closed $n$-manifold and $G$ a compact Lie group of dimension $d$.  By abuse of notation we also call the projection map of the induced bundle $p : \pad \to M$.   Let  $\phi \in \cg (P) = \Gamma_M(\pad)$.  Since $\phi$ is a section of $\pad$, for $y \in \pad$,  $\phi (p(y))$ and $y$ live in the same fiber over $M$.  That is,
$p(\phi (p(y)) = p(y) \in M$.  Thus the pair, $(\phi (p(y)), y)$ lies in the fiber product $\pad \times_M \pad$.  Since $\pad$ is a fiberwise group, we can compose with the fiberwise multiplication $\mu : \pad \times_M \pad \to \pad$ to produce an element $\phi \cdot y = \mu (\phi (p(y)), y) \in \pad$.   The map
\begin{align}
\cg (P) \times \pad &\to \pad  \notag \\
(\phi, y) &\to \phi \cdot y  \notag
\end{align}
defines an action of the gauge group on $\pad$.  This in fact  defines a $\cg (P)$-equivariance on the fiber bundle,
$G \to \pad \to M$.  That is, the following diagram commutes:
\begin{equation}\label{commutes}
\begin{CD}
\cg (P) \times \pad   @>\cdot >>  \pad  \\
@VVV     @VVpV \\
M  @>>=> M
\end{CD}
\end{equation}
where the left vertical arrow composes the projection map $\cg (P) \times \pad \to \pad$ with the bundle map $\pad \to M$.
Therefore this action induces an action on any (virtual) vector bundle over $\pad$ that is pulled back from a bundle over $M$.
In particular, on the level of Thom spectra, there is an induced action
$$
\cg(P)_+ \wedge (\pad)^{-TM} \to (\pad)^{-TM}.
$$
This is easily seen to be equivalent to the action of $\cg (P)$ on $\cs^\bullet_P(M) \simeq (\pad)^{-TM}$ described above.
\med

We now  observe that the gauge symmetry on the Lie group string topology spectrum $\cs^P_\bullet (M) \simeq (\pad)^{-\tv}$ described above can also be viewed in terms of the space level   action of $\cg (P)$ on $\pad$.    This  is  a consequence of the following observation. 

\med
\begin{proposition}  Let $Act : \cg (P) \times \pad \to \pad$ be the action map described above.  Then there is an isomorphism of virtual bundles over $\cg (P) \times \pad$,
$$
\cg (P) \times -\tv \pad  \xr{\cong} Act^*(-\tv \pad).
$$
\end{proposition}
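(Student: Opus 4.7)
The plan is to show that the vertical tangent bundle $\tv$ is pulled back from a bundle on $M$, after which the desired isomorphism follows immediately from the fact that the action $Act$ covers the identity on $M$.

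First I would trivialize the vertical tangent bundle fiberwise. Each fiber of $p : \pad \to M$ is a copy of $G^{Ad}$, and the right-invariant trivialization of $TG$ intertwines the conjugation action of $G$ on $G^{Ad}$ with the adjoint representation on $\mathfrak{g}$. Thus the $G$-equivariant identification $TG^{Ad} \cong G^{Ad} \times \mathfrak{g}$ descends, after passing to the associated bundle, to
\begin{equation*}
\tv \cong P \times_G (G^{Ad} \times \mathfrak{g}) \cong \pad \times_M (P \times_G \mathfrak{g}) = p^*(\mathrm{ad}(P)),
\end{equation*}
where $\mathrm{ad}(P) = P \times_G \mathfrak{g} \to M$ is the adjoint vector bundle.

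Next I would exploit the commutativity of diagram (\ref{commutes}), which says precisely that $p \circ Act = p \circ \pi_2$, where $\pi_2 : \cg(P) \times \pad \to \pad$ is the projection. Combining this with the trivialization above yields
\begin{equation*}
Act^*(\tv) \;\cong\; (p \circ Act)^*(\mathrm{ad}(P)) \;=\; (p \circ \pi_2)^*(\mathrm{ad}(P)) \;\cong\; \pi_2^*(\tv) \;=\; \cg(P) \times \tv.
\end{equation*}
Passing to additive inverses in the Grothendieck group of virtual bundles gives the desired isomorphism $\cg(P) \times (-\tv) \cong Act^*(-\tv)$.

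The only substantive step is the first one: recognizing that $\tv$ is pulled back from a vector bundle on $M$, which rests on the left/right-invariant trivialization of $TG$ together with equivariance of this trivialization with respect to conjugation. Once that is established, the remainder is purely diagrammatic and is forced by the fiber-preserving nature of the gauge action, so no serious obstacles are anticipated.
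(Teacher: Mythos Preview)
Your proof is correct, but it follows a genuinely different route from the paper's.  The paper does \emph{not} identify $\tv$ with the pullback of a bundle on $M$.  Instead it writes the virtual bundle as
\[
-\tv \;\cong\; -T\pad \oplus p^*(TM),
\]
and treats the two summands separately: for $p^*(TM)$ it uses the commutativity of diagram~(\ref{commutes}) exactly as you do, while for $T\pad$ it appeals to the differential of the action --- each $Act(\phi,-):\pad\to\pad$ is a diffeomorphism, and the fiberwise tangent map of $Act$ assembles these into an isomorphism $\cg(P)\times T\pad \cong Act^*(T\pad)$.  Passing to virtual inverses and summing gives the result.

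Your argument is more direct: by invoking the (conjugation--adjoint) equivariant trivialization $TG^{Ad}\cong G^{Ad}\times\mathfrak g$, you show $\tv\cong p^*(\mathrm{ad}(P))$ outright, so a single application of $p\circ Act = p\circ\pi_2$ finishes things.  This buys brevity and avoids the decomposition, at the cost of a small Lie-theoretic input (checking that left-invariant trivialization intertwines $dc_g$ with $\mathrm{Ad}_g$).  The paper's argument, by contrast, uses only that gauge transformations act by bundle diffeomorphisms and would apply verbatim to any fiberwise group over $M$, not just one arising from a compact Lie group.
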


\begin{proof} We first observe  that the commutativity of diagram (\ref{commutes}) says  that there is an isomorphism of vector bundles over $\cg (P) \times \pad$,
$$
\cg (P) \times p^*(TM)  \cong Act^*(p^*(TM)).
$$
Notice also that  there is an isomorphism of  vector bundles, $D : \cg (P) \times T\pad \xr{\cong} Act^*(T\pad)$, where $T\pad \to \pad$ is the tangent bundle.  The isomorphism is given by differentiation of the action.  Now notice that there is an induced isomorphism of virtual bundles, which by abuse of notation we call $-D : \cg (P) \times -T\pad \xr{\cong} Act^*(-T\pad)$.  This is defined by the composition of isomorphisms
\begin{align}
Act^*(-T\pad) &= -Act^*(T\pad) \notag \\
&\cong -(\cg (P) \times T\pad) \quad \text{by the above}, \notag \\
&= \cg (P) \times -T\pad. \notag
\end{align}

Now, using the fact that
$$
-\tv\pad \cong -T\pad \oplus p^*(TM)
$$
we have that 
\begin{align}
Act^*(-\tv \pad) &\cong Act^*(-T\pad) \oplus Act^*(p^*(TM)) \notag \\
&\cong \cg (P) \times  \left(-T\pad \oplus p^*(TM)\right) \quad \text{by the above,} \notag \\
&\cong \cg (P) \times -\tv \pad.
\end{align}
\end{proof}

\med
The last explicit example of this gauge symmetry will be one that was studied initially in \cite{cjgauge}. 

\bg
\noindent {\bf Example 2.}  Consider the principal $SU(2)$- bundle over an oriented $4$-dimensional sphere,
$$
SU(2) \to P_k \to S^4
$$
having second Chern class $c_2(P_k) = k \in H^4(S^4) \cong \bz.$

In this case we restrict our attention to the \sl based \rm gauge group $\cg^b(P_k)$ which is defined to be the kernel of the homomorphism,
 \begin{align}
\cg(P_k) &\to SU(2) \notag \\
\phi &\to \phi (\infty). \notag
\end{align}
Here we are thinking of $S^4$ as the one point compactification, $\br^4 \cup \infty$.

\med
 In  the case $k=1$ the (based) gauge symmetry on the manifold string topology ring spectrum $\cs^\bullet_{P_k}(S^4)$ was studied in \cite{cjgauge}.  We now observe that the argument presented in \cite{cjgauge} quickly extends to $P_k$ for all $k$, and we then show
 how it gives an understanding of the gauge symmetry of the Lie group string topology spectrum $\cs^{P_k}_\bullet (S^4)$ as well.
 
 \med
 As has become standard notation, given a ring spectrum $R$, let $GL_1(R)$ denote the ``group of units" of $R$.  More precisely, $GL_1(R)$ is defined so that the following diagram  of spaces is homotopy cartesian:
 
 \med
\begin{equation}\label{units}
 \begin{CD}
 GL_1(R)    @>>>     \Omega^\infty (R) \\
 @VVV       @VVcomponents V \\
 \pi_0 (R)^\times        @>>\hk >  \pi_0(R)
  \end{CD}
\end{equation}
 Here $ \pi_0(R)$ is the discrete ring of components and $ \pi_0 (R)^\times$ is its group of units.

 \med
 In other words, $GL_1(R)$ consists of those path components of the zero space $\Omega^\infty (R)$ consisting of homotopy invertible elements.
 An action of a group $G$ on a ring spectrum $R$ (i.e a $\Sigma^\infty (G_+)$-module structure on $R$) is induced by an $A_\infty$-morphism (``representation") 
 $$
 \rho : G \to GL_1(R).
 $$  (See for example {\cite{lind}).   To understand the gauge symmetry on the manifold string topology spectrum $\cs^\bullet_{P_k}(S^4)$ we therefore want to describe the representation
\begin{equation}\label{rep}
 \cg (P_k) \to GL_1(\cs^\bullet_{P_k}(S^4)).
\end{equation}
 
 \med
 Now as observed in  \cite{cjgauge},
 the group-like monoid  $GL_1(\cs^\bullet_{P_k}(S^4))$   is equivalent to the grouplike monoid $hAut(\Sigma^\infty_{S^4}((P_k)_+)$  of homotopy automorphisms of the parameterized spectrum $\Sigma^\infty_{S^4}((P_k)_+)$.  
 
To understand this monoid of homotopy automorphisms, note that  given any ring spectrum $R$ and parameterized $R$-line bundle  $\ce$ over $M$,  there is a fibration sequence
\begin{equation}\label{based}
 hAut^b(\ce) \to hAut (\ce) \xr{ev} hAut^R(\ce_{x_0}) = GL_1(R)
\end{equation}
where  the map $ev$  evaluates an automorphism on the fiber over the basepoint  $x_0 \in M$.   The fiber is $ hAut^b(R)$, which is  the $A_\infty$ group-like monoid of based homotopy automorphisms.  This is the subgroup of  $hAut (\ce)$ consisting of those homotopy automorphisms that are equal to the identity  on the fiber spectrum  at the basepoint $\ce_{x_0}$.

Putting these facts together yields a fibration sequence of group-like monoids,
\begin{equation}\label{haut}
hAut^b(\Sigma^\infty_{S^4}((P_k)_+))  \to GL_1(\cs^\bullet_{P_k}(S^4)) \to GL_1(\Sigma^\infty (SU(2)_+)).
\end{equation}
As was done  in \cite{cjgauge}, we observe that since $SU(2) \cong S^3$,  then the defining diagram (\ref{units}) becomes, in the case of $R = \Sigma^\infty (SU(2)_+)$,
$$
 \begin{CD}
 GL_1(\Sigma^\infty (SU(2)_+))    @>>>   Q(S^3_+) \\
 @VVV       @VVcomponents V \\
\pm 1        @>>\hk >  \bz
  \end{CD}
$$
That is, $ GL_1(\Sigma^\infty (SU(2)_+))$ consists of two path components of the infinite loop space $Q(S^3_+) $ corresponding to the units
$\pm 1 \in \bz \cong \pi_0(Q(S^3_+))$.   We denote this space by $Q_{\pm 1}(S^3_+)$.     Therefore fibration (\ref{haut}) has base space $Q_{\pm 1}(S^3_+)$.  We now examine the homotopy type of the fiber, $hAut^b(\Sigma^\infty_{S^4}((P_k)_+))$.

By one of the main results of \cite{cjgauge} (Theorem 3), there is an equivalence
$$
hAut^b(\Sigma^\infty_{S^4}((P_k)_+)) \xr{\simeq} \Omega Map^b_k(S^4, BGL_1(\Sigma^\infty (SU(2)_+)))  = \Omega^4_k GL_1(\Sigma^\infty (SU(2)_+))
$$ where $Map^b_k$ denotes the path component of the based mapping space  corresponding to $k \in \bz = \pi_0(Map^b(S^4, BGL_1(\Sigma^\infty (SU(2)_+)))$.  Similarly $\Omega^4_k$ denotes the corresponding path component in $\Omega^4GL_1(\Sigma^\infty (SU(2)_+)).$   Now since $\Omega^4GL_1(\Sigma^\infty (SU(2)_+))$ is a group-like monoid, all of its path components are homotopy equivalent.
So we therefore have the following result, which gives a good understanding of the group of units of the manifold string topology spectrum of the principal bundle $SU(2) \to P_k \to S^4$.

\begin{lemma}\label{fibgauge}
For any $k$, there is an equivalence of group-like monoids, $ \phi_k : hAut^b(\Sigma^\infty_{S^4}((P_k)_+)) \xr{\simeq} \Omega^4Q(S^3_+).$  Furthermore there are homotopy fibration sequences of group-like monoids
$$
\Omega^4Q(S^3_+) \xr{\iota_k} GL_1(\cs^\bullet_{P_k}(S^4)) \xr{q_k} Q_{\pm 1}(S^3_+).
$$
\end{lemma}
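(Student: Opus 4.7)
The plan is to derive both statements from Theorem 3 of \cite{cjgauge} (recalled in the paragraph preceding the lemma), combined with the fibration (\ref{haut}) and the identification $GL_1(\Sigma^\infty(SU(2)_+)) \simeq Q_{\pm 1}(S^3_+)$ established from the defining pullback square (\ref{units}).

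First I would construct $\phi_k$. Theorem 3 of \cite{cjgauge} supplies an equivalence
$$hAut^b(\Sigma^\infty_{S^4}((P_k)_+)) \xr{\simeq} \Omega^4_k GL_1(\Sigma^\infty(SU(2)_+)),$$
where the subscript $k$ denotes the path component corresponding to $k \in \pi_0(Map^b(S^4, BGL_1(\Sigma^\infty(SU(2)_+)))) \cong \bz$. Since $\Omega^4 GL_1(\Sigma^\infty(SU(2)_+))$ is a grouplike monoid, all of its path components are mutually homotopy equivalent, so $\Omega^4_k$ is homotopy equivalent to the identity component. On the other hand, because $GL_1(\Sigma^\infty(SU(2)_+))$ is the union of components of $Q(S^3_+)$ containing the identity, every based loop in $GL_1$ stays in the identity component, and hence $\Omega^4 GL_1(\Sigma^\infty(SU(2)_+)) \simeq \Omega^4 Q(S^3_+)$. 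Composing these equivalences produces $\phi_k$.

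For the fibration sequence, I would specialize (\ref{haut}),
$$hAut^b(\Sigma^\infty_{S^4}((P_k)_+)) \to GL_1(\cs^\bullet_{P_k}(S^4)) \to GL_1(\Sigma^\infty(SU(2)_+)),$$
by substituting $\phi_k$ on the fiber and the equivalence $GL_1(\Sigma^\infty(SU(2)_+)) \simeq Q_{\pm 1}(S^3_+)$ on the base. The maps $\iota_k$ and $q_k$ are then the induced fiber inclusion and base projection.

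The main subtlety to attend to is that the lemma asserts an equivalence of \emph{grouplike monoids} and a fibration sequence in that category, not merely on the level of underlying spaces. The equivalence from \cite{cjgauge} is already formulated multiplicatively, and the evaluation map in (\ref{based}) is a map of grouplike monoids, so the fibration (\ref{haut}) sits in that category. The one point requiring care is the component-matching step $\Omega^4_k \simeq \Omega^4_0$, where the homotopy equivalence is provided by translation by an invertible element; in the grouplike $A_\infty$-setting this translation can be promoted to a coherent equivalence of grouplike monoids, which completes the argument.
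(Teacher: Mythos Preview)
Your proposal is correct and follows essentially the same route as the paper: the paper derives $\phi_k$ by invoking Theorem 3 of \cite{cjgauge} to identify $hAut^b(\Sigma^\infty_{S^4}((P_k)_+))$ with $\Omega^4_k GL_1(\Sigma^\infty(SU(2)_+))$, then uses that all components of a grouplike monoid are equivalent together with the identification $GL_1(\Sigma^\infty(SU(2)_+)) = Q_{\pm 1}(S^3_+)$, and obtains the fibration sequence by substituting these identifications into (\ref{haut}). Your added remark on promoting the component-translation to an equivalence of grouplike monoids is a point the paper leaves implicit.
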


\med
In order to understand the representation $\rho_k : \cg^b(P_k) \to GL_1(\cs^\bullet_{P_k}(S^4))$  describing the gauge symmetry of the manifold string topology spectrum, we now consider the homotopy type of the based gauge group $\cg^b(P_k)$. Again, for $k=1$ this was done in \cite{cjgauge}, and we simply adapt the argument there to apply to all $k$. 

By a basic result on the topology of gauge groups proved by Atiyah and Bott in \cite{atiyahbott}, we have that
$$
\cg^b(P_k) \simeq \Omega Map^b_k(S^4, BSU(2)),
$$
where, as above, $Map^b_k$ denotes the path component of degree $k$ based maps. This (based) loop space is equivalent to $\Omega \Omega^3_kSU(2) = \Omega \Omega^3_kS^3$.  Since $\Omega^3S^3$ is a group-like monoid, all of its path components are equivalent, so we have that the following.

\begin{lemma}
For any $k$, there is an equivalence of group-like monoids,  $\psi_k : \cg^b(P_k) \xr{\simeq} \Omega^4S^3$.  
\end{lemma}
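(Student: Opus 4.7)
The plan is to invoke the Atiyah--Bott theorem to express $\cg^b(P_k)$ as a loop space of a based mapping space, then use standard adjunctions to identify this with an iterated loop space of $S^3$, and finally use the group-like structure of $\Omega^3 S^3$ to handle the nontrivial path component.

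First, I apply the Atiyah--Bott theorem \cite{atiyahbott} to obtain the equivalence of grouplike monoids $\cg^b(P_k) \simeq \Omega \, Map^b_k(S^4, BSU(2))$, where $Map^b_k$ denotes the component of based degree-$k$ maps. Next, the loop/suspension adjunction gives $Map^b(S^4, BSU(2)) \cong \Omega^4 BSU(2)$, and the canonical equivalence $\Omega BSU(2) \simeq SU(2) \cong S^3$ yields $Map^b(S^4, BSU(2)) \simeq \Omega^3 S^3$. The degree-$k$ component is identified with the path component $\Omega^3_k S^3$ via the isomorphism $\pi_0(\Omega^3 S^3) \cong \pi_3(S^3) \cong \bz$. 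Because $\Omega^3 S^3$ is a grouplike monoid, translation by any chosen element of $\Omega^3_k S^3$ gives a homotopy equivalence $\Omega^3_0 S^3 \xr{\simeq} \Omega^3_k S^3$; looping this yields $\Omega \, \Omega^3_k S^3 \simeq \Omega^4 S^3$. Composing the identifications produces the desired map $\psi_k : \cg^b(P_k) \xr{\simeq} \Omega^4 S^3$.

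The (modest) obstacle is verifying that $\psi_k$ is an equivalence of grouplike monoids rather than merely of underlying spaces. The Atiyah--Bott equivalence respects the grouplike structure, with gauge group composition corresponding to pointwise multiplication in $SU(2)$; the loop/suspension adjunction preserves loop-composition products; and the component-translation step commutes up to homotopy with loop concatenation because $\Omega^3 S^3$ is grouplike. The parallel argument for $k=1$ already appears in \cite{cjgauge}, and the only additional ingredient needed here is the standard homogeneity of path components in a grouplike $H$-space, which upgrades the equivalence to all values of $k$.
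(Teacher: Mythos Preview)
Your proposal is correct and follows essentially the same approach as the paper: invoke Atiyah--Bott to get $\cg^b(P_k)\simeq \Omega\,Map^b_k(S^4,BSU(2))\simeq \Omega\,\Omega^3_k S^3$, then use that $\Omega^3 S^3$ is group-like so all its components are equivalent. The paper's proof is terser (it does not spell out the component-translation or the compatibility with the monoid structure), but the content is the same.
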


\med
By Proposition 5 of \cite{cjgauge},  one knows that given any principal bundle over a manifold, $G \to P \to M$,  the action of the gauge group (and therefore the based gauge group) on the manifold string topology spectrum $\cs^\bullet_P(M)$ is defined by the representation given by the stabilization map
\begin{align}
\cg^b(P) &\xr{\rho}  GL_1(\cs^\bullet_P(M)) \notag \\
\Omega Map^b_P(M, BG)    &\xr{\sigma} \Omega Map^b_P (M, BGL_1(\Sigma^\infty (G_+)) \notag
\end{align}
where $\sigma$ is induced by the natural inclusion $G \hk GL_1(\Sigma^\infty (G_+)$. Here $Map^b_P$ denotes the path component of the based mapping space that  classifies the bundle $P$. 

In the case of $SU(2) \to P_k \to S^4$ then Lemma \ref{fibgauge} says that the representation $\rho_k : \cg^b(P_k) \to GL_1(\cs^\bullet_{P_k}(S^4))$ is given by the stabilization map
\begin{equation}\label{act}
\Omega^4S^3    \xr{\sigma}  \Omega^4Q(S^3_+)  \xr{\iota_k} GL_1(\cs^\bullet_{P_k}(S^4)).
\end{equation}
      where  $\sigma$ is induced by the map $ u_k : S^3 \to    Q(S^3_+) \simeq Q(S^3) \times QS^0$  that sends $S^3$ to a generator of $\pi_3Q(S^3) \cong \bz$  cross the basepoint of the component $Q_k(S^0)$.

\bg
Finally, notice that given any compact ring spectrum $R$, the group of units $GL_1(R)$ acts on its Spanier-Whitehead dual $R^\vee$ by the dual action of $GL_1(R)$ on $R$.   Given the Spanier-Whitehead duality between the manifold string topology spectrum $\cs^\bullet_{P_k}(S^4) \simeq (P_k^{Ad})^{-TS^4}$ and the Lie group string topology spectrum $\cs^{P_k}_\bullet(S^4) \simeq (P_k^{Ad})^{-\tv}$ then (\ref{act}) describes the action of the based gauge group $\cg^b(P_k)$ on the Lie group string topology spectrum as well.

\med
To end this section we point out that Proposition \ref{cCYFrob} and the above analysis of gauge symmetry implies the following.

\begin{theorem}\label{frobgauge}   Let $G \to P \to M$ be a principal bundle over a closed $n$-manifold $M$, with $G$ a $d$-dimensional compact Lie group.   Let $E$ be any ring spectrum with respect to which the compact Calabi-Yau structure on $\cs^\bullet_P(M)$ given in Proposition \ref{ptmcy} is oriented.  Then the homology $E_*(\cs^\bullet_P(M))$ is a Frobenius algebra over the homology of the gauge group, $E_*(\cg(P))$.  That is, the following conditions hold:
\begin{itemize}
\item The homology algebra structure of the manifold string topology  ring spectrum $E_*(\cs^\bullet_P(M))$ carries the structure of an algebra over $E_*(\cg (P)).$ \\
 \item The homology coalgebra structure of the Lie group string topology coalgebra spectrum $E_*(\cs_\bullet^P(M))$ is a module over $E_*(\cg (P))$, and \\
 \item The duality homomorphism defined by the Frobenius algebra structure induced from the compact Calabi-Yau structure, 
 $$
 E_*(\cs^\bullet_P(M))  \xr{\cong} E_{n-d-*}(\cs_\bullet^P(M))^*
 $$
 is an isomorphism of $E_*(\cg (P))$ modules.
 \end{itemize}
 \end{theorem}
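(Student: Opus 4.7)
The plan is to assemble the Frobenius-over-$E_*(\cg(P))$ structure piece by piece by combining the gauge symmetry established in Theorem~\ref{gaugesym} with the passage from twisted compact Calabi-Yau structure to Frobenius algebra supplied by Proposition~\ref{cCYFrob}. By Proposition~\ref{ptmcy} the triple $(\cs^\bullet_P(M),\,\Sigma^{d-n}\cs^P_\bullet(M),\, t)$ is a twisted compact Calabi-Yau ring spectrum of dimension $n-d$, and by hypothesis it has an $E$-orientation. Proposition~\ref{cCYFrob} then already produces a nondegenerate Frobenius pairing on $E_*(\cs^\bullet_P(M))$ whose dual is $E_*(\cs^P_\bullet(M))$. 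What remains is to upgrade each of algebra, module/coalgebra, and duality data to the $E_*(\cg(P))$-equivariant setting.

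For the first bullet, I would invoke the stabilization map $\rho: \Sigma^\infty(\cg(P)_+) \to \cs^\bullet_P(M)$ of (\ref{stab1}), which was shown in \cite{cjgauge} to be a map of ring spectra. Applying $E_*$ yields a ring homomorphism $E_*(\cg(P)) \to E_*(\cs^\bullet_P(M))$, making $E_*(\cs^\bullet_P(M))$ into an algebra over $E_*(\cg(P))$. For the second bullet, Theorem~\ref{gaugesym} asserts that $\Sigma^{d-n}\cs^P_\bullet(M)$ is a module spectrum over $\Sigma^\infty(\cg(P)_+)$, so $E_*(\cs^P_\bullet(M))$ is automatically a module over $E_*(\cg(P))$. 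To see that the coalgebra structure on $\cs^P_\bullet(M)$ is equivariant, I dualize: by Proposition~\ref{coalg} the coproduct on $\cs^P_\bullet(M)$ is Spanier-Whitehead dual to the product on $\cs^\bullet_P(M)$, and the concluding statement of Theorem~\ref{gaugesym} says that this duality itself is $\cg(P)$-equivariant; hence the coproduct is equivariant as well.

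The third bullet, equivariance of the Frobenius duality isomorphism, is essentially a direct translation of the last assertion of Theorem~\ref{gaugesym}: the adjoint equivalence $\cs^\bullet_P(M) \xr{\simeq} \cs^P_\bullet(M)^\vee$ coming from the Calabi-Yau pairing is $\cg(P)$-equivariant. Smashing with $E$ and invoking the orientation equivalence $u: Q \wedge E \xr{\simeq} R \wedge E$, one obtains a $\cg(P)$-equivariant equivalence of $E$-module spectra, which on homotopy groups specializes to the desired isomorphism of $E_*(\cg(P))$-modules $E_*(\cs^\bullet_P(M)) \xr{\cong} E_{n-d-*}(\cs_\bullet^P(M))^{\vee}$.

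The main obstacle in this approach will be verifying that the orientation data $(u,\tilde t_E)$ is itself compatible with the $\cg(P)$-action, since the Frobenius pairing is only guaranteed to be equivariant after this compatibility is secured. The orientation packages Thom isomorphisms for the virtual bundles $-TM \to M$ and $-T_{\mathrm{vert}} \to \pad$. As noted in Section~3, the action of $\cg(P)$ on $\pad$ covers the identity on $M$, so the Thom class of $-TM$ is manifestly equivariant, while the proposition following Example~1 of Section~3 gives a canonical isomorphism $\cg(P) \times -T_{\mathrm{vert}}\pad \xr{\cong} \mathrm{Act}^*(-T_{\mathrm{vert}}\pad)$ so that the Thom class of $-T_{\mathrm{vert}}$ is pulled back equivariantly as well. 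With these compatibilities in hand, the three pieces assemble into the Frobenius-algebra-over-$E_*(\cg(P))$ structure claimed.
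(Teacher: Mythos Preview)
Your proposal is correct and follows essentially the same approach as the paper, which simply remarks that the theorem is implied by Proposition~\ref{cCYFrob} together with the preceding analysis of gauge symmetry (in particular Theorem~\ref{gaugesym}). You have supplied considerably more detail than the paper does---including the orientation-compatibility check via the virtual bundle isomorphism $\cg(P)\times -T_{\mathrm{vert}}\pad \cong \mathrm{Act}^*(-T_{\mathrm{vert}}\pad)$---but the underlying logic is the same.
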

    
    \section{Twisted, smooth Calabi-Yau ring spectra, Thom ring  spectra,  and Lagrangian immersions of spheres}

\bg

We now turn to the notion of twisted Calabi-Yau structures for \sl smooth \rm ring spectra. 

Recall that  a \sl smooth \rm ring spectrum $A$ is one that is perfect as an $A$-bimodule.  That is, it is perfect as a left $A\wedge A^{op}$- module. Given a smooth ring spectrum $A$, let $A^!$ be its ``bimodule dual".  That is,
$$
A^! = Rhom_{A \wedge A^{op}} (A, A \wedge A^{op}).
$$

Now recall that given any $A$-bimodules $P$ and $Q$ there  is a cap product pairing
$$
\cap : Rhom_{A \wedge A^{op}}(A, P) \quad \wedge  \quad  A \wedge^L_{A \wedge A^{op}}Q  \quad  \la \quad P \wedge^L_{A \wedge A^{op}} Q.
$$

When $P = A \wedge A^{op}$, then one can take a cap product with respect to a map $\rho : \bs \to A \wedge_{A \wedge A^{op}}Q $
to obtain a map
\begin{equation}\label{cap}
\cap \rho : A^! \to Q.
\end{equation}

\begin{definition}
A \bf ``twisted, smooth Calabi-Yau ring spectrum" (twisted  $sCY$)  \rm  of dimension $n$ 
is a triple $(A, P, \sigma)$, where $A$ 
is a smooth ring spectrum and 
$P$ is a smooth $A$-bimodule    and has the same $\bz/2$-homology as $A$:  $$H_*(P; \bz/2) \cong H_*(A; \bz/2).$$  We refer to $P$ as the ``twisting" bimodule.   If $P = A$ we say that $A$ has \sl trivial \rm twisting.
 $$\sigma : \Sigma^n \bs \to THH(A, \, P)$$ is a map of spectra we call the `` $n$-\sl dimensional  cotrace map" \rm  which  has the following duality property:
 The induced cap product pairing
$$
\cap \sigma : A^! = Rhom_{A \wedge A^{op}} (A, A \wedge A^{op})   \la  \Sigma^{-n} P  
$$
is an equivalence of $A$-bimodule spectra.  
\end{definition}

\noindent \bf Note: \rm Given a graded module $P$ over a ring $R$ let $P[-n]$ denote the desuspension $\Sigma^{-n}(P)$.  

\med

Like in the compact case, in most   applications a twisted, smooth Calabi-Yau spectrum is reduced over a homology theory with respect to which the twisting becomes trivialized, or ``oriented".   We now make this precise.

\begin{definition}  Let $(A, P, \sigma)$ be a twisted, $sCY$  ring spectrum of dimension $n$, and let $E$ be a ring spectrum representing a homology theory $E_*$.  An   \sl ``$E_*$-orientation"  \rm of $(A, P, \sigma)$ is a pair $(u, \tilde \sigma_E)$, where
$$
u :     P \wedge E  \xr{\simeq}  A \wedge E
$$ 
is an equivalence    of   $E$-module spectra  as $A$-bimodules. Here $A$ acts trivially on $E$.      $\tilde \sigma_E : \Sigma^{n} E \to THH(A , \,  A)^{hS^1}\wedge E $ is a map to the $E$-homology of the homotopy fixed point spectrum  of the $S^1$-action induced by the cyclic structure, which factorizes the cotrace map  $\sigma$ in $E_*$-homology.  That is, the following diagram homotopy commutes:

$$
\begin{CD}
\Sigma^nE  @>  \tilde \sigma_E >>   THH(A, \,  A)^{hS^1} \wedge E  \\
@V\sigma \wedge 1 VV    @VV j V \\
THH(A, \, P)\wedge E  @>\simeq >u_*>  THH(A, A)\wedge E.
\end{CD}
$$
Here $j$ is the  natural inclusion of the homotopy fixed points.   
\end{definition}

\med
Notice that when $E =Hk$, the Eilenberg-MacLane spectrum for a field $k$, then a twisted, smooth Calabi-Yau spectrum $(A, \tau, \sigma)$ together with an  $Hk$-orientation $(u, \tilde \sigma_{Hk})$ defines a smooth Calabi-Yau algebra structure on the singular chains with $k$-coefficients, $C_*(A; k)$, as defined in \cite{kv} and in \cite{cg}.

\subsection{Thom spectra of virtual bundles over the loop space of a manifold}

 We now consider important examples of a twisted, smooth Calabi-Yau spectra.  These are Thom ring spectra of virtual bundles over $\Omega M$, for $M$ a closed manifold.    

We begin by studying the Thom spectrum of  the trivial bundle over $\Omega M$, namely the suspension spectrum $\Sigma^\infty (\Omega M_+)$. The following  generalizes the chain complex analogue proven by the first author and Ganatra in \cite{cg}.
 
 \med
 \begin{theorem}\label{OmegaM}  Let $M$ be a closed manifold of dimension $n$.  Then the suspension spectrum of its based loop space,
 $\Sigma^\infty (\Omega M_+)$ can be given the structure of a twisted, smooth Calabi-Yau ring spectrum of dimension $n$. 
 \end{theorem}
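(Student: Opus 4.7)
\medskip

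\textbf{Proof proposal.}
I plan to adapt the chain-level argument of the first author and Ganatra in \cite{cg} to the category of spectra. The four things to construct and verify are: smoothness of $A = \Sigma^\infty(\Omega M_+)$; a twisting bimodule $P$; an $n$-dimensional cotrace $\sigma : \Sigma^n \bs \to THH(A, P)$; and the equivalence $A^! \xr{\cap \sigma} \Sigma^{-n}P$.

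Smoothness of $A$ follows from the fact that $M$ has the homotopy type of a finite CW complex, which lets one approximate the two-sided bar resolution of $A$ by a perfect $A \wedge A^{op}$-module. Equivalently, one uses the standard equivalence $THH(A) \simeq \Sigma^\infty(LM_+)$ together with a parameterized Atiyah duality over $M$ applied to the loop fibration $\Omega M \to LM \xr{ev} M$; this identifies $A^! = Rhom_{A \wedge A^{op}}(A, A \wedge A^{op})$ with the Thom spectrum of a finite-dimensional virtual bundle over $\Omega M$, so in particular $A^!$ is compact.

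For the twisting bimodule, the classifying map $M \to BO$ of $TM$, composed with the $J$-homomorphism $BO \to BGL_1(\bs)$, gives a map $M \to BGL_1(\bs)$. Applying $\Omega$ yields a group-like $E_1$-map $\tau : \Omega M \to GL_1(\bs)$. I would define $P$ to be $\Sigma^\infty(\Omega M_+)$ as a left $A$-module but with right action twisted by $\tau$. Because $\tau$ lands in units of $\bs$ (a rank-zero twist) the Thom isomorphism gives $H_\bullet(P; \bz/2) \cong H_\bullet(A; \bz/2)$, as required. A parameterized version of the identification $THH(\Sigma^\infty(\Omega M_+)) \simeq \Sigma^\infty(LM_+)$ then identifies $THH(A, P)$ with a Thom spectrum over the free loop space whose $n$-th homotopy group contains the fundamental class of the constant loops $M \hk LM$; that class, produced from a Pontryagin-Thom collapse on an embedding $M \hk \br^N$, is the cotrace $\sigma$.

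With these identifications in hand, the cap product $\cap \sigma : A^! \to \Sigma^{-n}P$ becomes, on the level of underlying parameterized spectra over $M$, the classical Atiyah duality equivalence $M^{-TM} \simeq M^\vee$ pulled back along the loop fibration, and is therefore an equivalence of $A$-bimodules. The main obstacle I anticipate is organising the $J$-homomorphism twist $\tau$ as a coherent $E_1$-map in the sense needed for a well-defined bimodule structure on $P$, and then matching it up with the expected $-TM$ twist on $LM$ in the identification of $THH(A, P)$; this is the spectrum-level counterpart of the orientation-character twist used at the chain level in \cite{cg}, but the finer multiplicative coherence needed in the spectrum category is what makes the construction delicate.
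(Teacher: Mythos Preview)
Your outline is essentially the paper's own argument: the twisting bimodule is $A$ twisted by the spherical fibration associated to $-TM$ (written in the paper as $P=\bs_{-\tau_M}\wedge A$), the cotrace comes from the Pontrjagin--Thom map $\Sigma^n\bs\to\Sigma^n M^{-TM}$ followed by the inclusion of constant loops, and the verification that $\cap\sigma$ is an equivalence is Klein's parameterized Poincar\'e duality. Your anticipated obstacle is not one: since $\tau$ is by construction $\Omega$ of a map $M\to BGL_1(\bs)$, it is automatically a loop map, hence coherently $E_1$, and the bimodule $P$ is well-defined with no further work. The paper's organising device for the duality step, which may sharpen your sketch, is the fibre sequence $\Omega M\xrightarrow{\tilde\Delta}\Omega M\times\Omega M\to E\to M$ with $\tilde\Delta(g)=(g,g^{-1})$; since $E\simeq\Omega M$ and the projection $E\to M$ is nullhomotopic, one reads off $A^{!}\simeq\Gamma_M(\Sigma^\infty_M(E_+))\simeq E^{-TM}\simeq\Sigma^{-n}P$ directly.
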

 
 \begin{proof}  In order to give $ A = \Sigma^\infty (\Omega M_+)$ a twisted $sCY$ structure, we need to define a  twisting bimodule and a cotrace map.
 Consider the virtual bundle $-TM$ over $M$.  The associated virtual spherical fibration is classified by a map $B_{-\tau_M} : M \to BGL_1(\bs)$.
 By taking the loop of this map and applying suspension spectra we get a map of ring spectra
\begin{equation}\label{tau}
 -\tau_M : A = \slm  \to \Sigma^\infty (GL_1(\bs)).
\end{equation}
This defines a $\slm$-bimodule structure on the sphere spectrum.  We let $\bst$ be the sphere spectrum with this bimodule structure, and we define
 $P =   \bst \wedge \slm =  \bst \wedge A$ to be the induced bimodule.  Here $\bst \wedge \slm$ is given the diagonal $A$-bimodule structure. 
 
  $P$  will be the twisting bimodule. 
  To describe the $n$-dimensional cotrace map, we first need the following observation.  
 
 Replace $\Omega M$ by its Kan loop group, which by abuse of notation we still write as $\Omega M$.   Consider the adjoint action of $\Omega M$ on $\Omega M \times \Omega M$ defined by   
 $$
 g \cdot (h_1, h_2) = (gh_1, h_2g^{-1}).
 $$
  
If $E$ is the homotopy orbit space of this action, $E = pt \times^L_{\Omega M} (\Omega M \times \Omega M)^{Ad}$  then we have a  homotopy fiber sequence (i.e each successive three terms is a homotopy fibration)
\begin{equation}\label{Delta}
\Omega M \xr{\tilde \Delta} \Omega M \times \Omega M \to E \xr{\pi} M.
\end{equation}
 where $\tilde \Delta (g) = (g, g^{-1}).$  From this sequence it immediately follows that $E \simeq \Omega M$ and $\pi : E \to M$ is null homotopic.

 The Thom spectrum of the pull-back virtual bundle $\pi^*(-TM)$ which we denote by $E^{-TM}$ is given by
 $$
 \Sigma^{-n} \bs_{-\tau_M} \wedge^L_{\Omega M} \Sigma^\infty (\Omega M_+) \wedge  \Sigma^\infty (\Omega M_+)  =  \Sigma^{-n} \ \bs_{-\tau_M}\wedge^L_A (A\wedge A^{op})^{Ad}.
 $$
 However, since $\pi : E \to M$ is null homotopic, $\pi^*(-TM)$ is trivial, and there is an equivalence
\begin{equation}\label{etm}
 h : E^{-TM} = \Sigma^{-n} \ \bs_{-\tau_M}\wedge^L_A (A\wedge A^{op})^{Ad} \xr{\simeq}  \Sigma^{-n} \bs_{-\tau_M} \wedge A   = P[-n].     
\end{equation}
 Furthermore one can check that this equivalence can be taken to be one of $A$-bimodules. 
 
 We therefore have an equivalence
 $$
 \begin{CD}
   A \wedge^L_{A\wedge A^{op}} (\bs_{-\tau_M}\wedge_A (A\wedge A^{op})^{Ad}) @>h >\simeq >  A \wedge^L_{A\wedge A^{op}} (\bs_{-\tau_M}\wedge A)  @>= >>    = THH(A, \, P).
 \end{CD}
 $$
 
 Now the map $\tilde \Delta : \Omega M  \to \Omega M \times \Omega M$ defines a ring map on the level of suspension spectra,
 which by abuse of notation we still call $\tilde \Delta$,
 $$
 \tilde \Delta : A \to A \wedge A^{op}.
 $$
 We then get a change-of-rings equivalence,
 $$
\phi:  A^{Ad}   \wedge^L_A  \bs_{-\tau_M} \xr{\simeq}  A \wedge^L_{A\wedge A^{op}} ((A\wedge A^{op})^{Ad}\wedge_A^L \bs_{-\tau_M} ).
 $$

Consider the unit map $u : \bs \to \slm = A$.  This defines a map
 $$
 u : \bs \wedge^L_A    \bs_{-\tau_M}     \to A^{Ad}   \wedge^L_A  \bs_{-\tau_M}.
 $$
 
 Now  $ \bs \wedge^L_A    \bs_{-\tau_M} $ is the Thom spectrum $\Sigma^n(\mtm)$.   Thus there is a Pontrjagin-Thom map $\gamma : \Sigma^n \bs \to \Sigma^n(\mtm) =  \bs \wedge^L_A    \bs_{-\tau_M}$.  This can be viewed as the $n$-fold suspension of the unit map of the Spanier-Whitehead dual, $$\bs \to M^\vee \simeq \mtm.$$
 
 \med
 We can now define the $n$-dimensional cotrace map $\sigma : \Sigma^n \bs \to THH(A, \,  P) $ to be the composition
\begin{align}
 \sigma :  \Sigma^n \bs \xr{\gamma} \bs \wedge^L_A    \bs_{-\tau_M} \xr{u} A^{Ad}   \wedge^L_A  \bs_{-\tau_M} &\xr{\phi}A \wedge^L_{A\wedge A^{op}} ((A\wedge A^{op})^{Ad}\wedge_A^L \bs_{-\tau_M} ) \notag \\ &\xr{h}  A \wedge^L_{A\wedge A^{op}} (\bs_{-\tau_M}\wedge A) = THH(A, \, P) 
\end{align}

 \med
  To show $\sigma$  is a valid cotrace map we need to check that it satisfies the required duality condition.  Namely, we need to show that the cap product,
 \begin{equation}\label{cap}
 \cap \sigma :  A^! =  Rhom_{A \wedge A^{op}}(A, \, A \wedge A^{op})  \la \Sigma^{-n}\bs_{\negtm}\wedge A = P[-n]
 \end{equation} is an equivalence of $A$-bimodules.  This map was constructed to be an $A$-bimodule map, so it suffices to check that it is an ordinary  weak equivalence. 


 In order to do this, we study the homotopy type of $Rhom_{A \wedge A^{op}}(A, \, A \wedge A^{op})$, where $A = \slm$.   Notice that since $A$ is a connective Hopf algebra (in the weak sense),  we have an equivalence
 $$
 Rhom_{A \wedge A^{op}}(A, \, A \wedge A^{op})  \simeq Rhom_A (\bs, \,  (A \wedge A^{op})^{Ad}).
 $$
    
Consider again the homotopy fibration $\Omega M \times \Omega M \to E \to M$, and its fiberwise suspension spectrum. 
  This is the parameterized spectrum 
 $$
 \slm \wedge \slm \to \Sigma^\infty_M(E_+) \to M.
 $$
 The spectrum of sections of this parameterized spectrum  is the spectrum whose homotopy type we are trying to compute:
 
 $$
  \Gamma_M(\Sigma^\infty_M(E_+)) \simeq Rhom_A (\bs, \,  (A \wedge A^{op})^{Ad}) \simeq  Rhom_{A \wedge A^{op}}(A, \, A \wedge A^{op}).
 $$
 
 If we let $\ce^\bullet$ be the twisted equivariant cohomology theory represented by the parameterized spectrum $\Sigma^\infty_M(E_+)$,
 then what we are trying to compute is $\ce^\bullet (M)$.  But by twisted Poincar\'e duality theorem of Klein \cite{klein} as described in \cite{CohenKlein}, this twisted cohomology spectrum is given by the Thom spectrum
 $$
 \ce^\bullet (M) = \Gamma_M(\Sigma^\infty_M(E_+)) \simeq E^{-TM}.
 $$
 Now as seen in (\ref{etm}) above, there is an equivalence 
 $$
 h : E^{-TM} \xr{\simeq} \Sigma^{-n}\bs_{\negtm} \wedge A.
 $$
 Putting these together gives an equivalence,
 $$
 A^! =  Rhom_{A \wedge A^{op}}(A, \, A \wedge A^{op}) =  \ce^\bullet (M) \simeq E^{-TM} \xr{h} \Sigma^{-n}\bs_{\negtm} \wedge A = P[-n].
 $$
 
 We leave it to the reader to check that the cap product map
 $$
 \cap \sigma : A^! \to P[-n]
 $$
 induces such an equivalence.      
 
 Given this, the proof that $(\slm, \negtm, \sigma)$ is a twisted, smooth Calabi-Yau ring spectrum is complete.
  \end{proof}

Orientations on this twisted, smooth Calabi-Yau ring spectrum will be addressed in a more general context, in the setting of Thom spectra of virtual bundles over $\Omega M$.
 \med

 We now generalize the above to the setting of Thom ring spectra.  Namely, let 
 $\Omega f: \Omega M \to BGL_1(\bs)$ be a loop map. That is, it is obtained by applying the based loop functor to a   map $f : M \to B(BGL_1(\bs))$.   Let $\Omega M ^{\Omega f}$ denote the Thom spectrum of $\Omega f$. By a theorem of Lewis, $\Omega M ^{\Omega f}$ is a ring spectrum.   Let $E$ be any commutative ($E_\infty$) ring spectrum.  As is usual we say that a virtual bundle
 $\omega : X \to BGL_1(\bs)$ is $E$-orientable if there is a ``Thom class"  $\tau : X^\omega \to E$  so that the composition
 \begin{equation}\label{diag}
\theta_\tau :  X^\omega  \wedge E  \xr{\Delta \wedge 1}  X_+ \wedge X^\omega \wedge E \xr{1 \wedge \tau \wedge 1} X_+ \wedge E \wedge E \xr{1 \wedge mult.} X_+ \wedge E
\end{equation}
 is an equivalence (the ``Thom isomorphism").  Here $ \Delta : X^\omega \to X_+ \wedge X^\omega$ is the map of Thom spectra induced by the diagonal map $X \to X \times X$.  Again, as is usual we define an $E$-orientation of a manifold $M$ to be an $E$-orientation of its tangent bundle $\tau_M$  or equivalently of $-\tau_M$.   An $E$-orientation of a loop map   $\Omega f : \Omega Y \to BGL_1(\bs)$ has the additional requirement that the Thom class $\tau : (\Omega Y)^{\Omega f} \to E$ be a map of ring spectra.  In this case notice that the orientation equivalence $\theta_\tau$  in (\ref{diag}) is an equivalence of ring spectra.


\begin{theorem}\label{prop-thom-scy}
$A= \Omega M ^{\Omega f}$  naturally has the structure of a twisted sCY ring spectrum of dimension $n$. Furthermore,  suppose $E$ is a commutative ring spectrum.   Then an $E$ orientation $\tau : \mtm \to E$ of $M$ and an $E$-orientation $A =  \Omega M ^{\Omega f} \to E$   together  induce an $E$-orientation on the $sCY$ structure on $A$.     
 \end{theorem}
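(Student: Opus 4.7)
The plan is to extend the construction in the proof of Theorem~\ref{OmegaM} by replacing $\slm$ with the Thom ring spectrum $A = \Omega M^{\Omega f}$ throughout. First, observe that $A$ is a ring spectrum by Lewis's theorem since $\Omega M$ is (equivalent to) a topological group and $\Omega f$ is a loop map. Set $P = \bst \wedge A$, where $\bst$ carries an $A$-bimodule structure via the composite $A \to \slm \xr{-\tau_M} \Sigma^\infty GL_1(\bs)$ (the first map induced by the Thom diagonal, the second as in (\ref{tau})). Define the cotrace $\sigma : \Sigma^n \bs \to THH(A, P)$ by exactly the composite appearing in the proof of \ref{OmegaM}, with $A$ replaced throughout by the present $A = \Omega M^{\Omega f}$: the Pontrjagin--Thom unit $\gamma : \Sigma^n \bs \to \bs \wedge^L_A \bst$, followed by the unit $u$ of $A$, the change-of-rings equivalence $\phi$ associated to the adjoint diagonal $\tilde \Delta : A \to A \wedge A^{op}$, and the trivialization $h$ of the pulled-back $-TM$. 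Each step carries over because the only structural input used is that $A$ is the Thom spectrum of a loop map over the grouplike $\Omega M$, not the particular choice of loop map.

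The key step is to verify that the cap product $\cap\sigma : A^! \to \Sigma^{-n} P$ is an equivalence of $A$-bimodules. As in \ref{OmegaM}, I would reduce (via the weak Hopf-algebra structure on $A$) to computing the spectrum of sections over $M$ of the fiberwise suspension spectrum of the homotopy fibration $\Omega M \times \Omega M \to E \to M$ from (\ref{Delta}), now Thom-ified along the $\Omega f$-twist pulled back through $E \to M$ and further twisted by $-\tau_M$. The parameterized Atiyah/Poincar\'e duality of Klein \cite{klein}, \cite{CohenKlein} identifies this spectrum of sections with the corresponding $-TM$-twisted Thom spectrum of $E$, and since $\pi : E \to M$ is null-homotopic, this simplifies to $\Sigma^{-n} P$. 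The subtle point, and principal technical obstacle, is to check that this composite equivalence agrees with $\cap \sigma$; this amounts to tracing the compatibility of Klein's duality with the loop-map twist by $\Omega f$, essentially the analogue of the diagram chase left to the reader at the end of the proof of \ref{OmegaM}.

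For the orientation statement: an $E$-orientation $\tau_A : A \to E$ of ring spectra trivializes $\Omega f$ after smashing with $E$, yielding $A \wedge E \simeq \slm \wedge E$ as $A$-bimodules. The Thom class $\tau : \mtm \to E$ of $M$ trivializes the loop map $-\tau_M$ after smashing with $E$, yielding $\bst \wedge E \simeq E$ as $\slm$-bimodules. Combining these produces the required $A$-bimodule equivalence $u : P \wedge E \xr{\simeq} A \wedge E$. To construct the $S^1$-fixed-point factorization $\tilde\sigma_E : \Sigma^n E \to THH(A, A)^{hS^1} \wedge E$, I would invoke the extension of Jones's theorem identifying $THH(A) \wedge E$ with $\Sigma^\infty (LM_+) \wedge E$ under the combined orientation (with $S^1$ acting by rotation of free loops), and recognize the $E$-smashed cotrace as the $n$-fold suspension of the inclusion of constant loops $M \hookrightarrow LM$, which is tautologically $S^1$-equivariant for the trivial action on $M$ and so lifts canonically to the homotopy fixed points. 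The final subtlety is carrying the $\Omega f$-twist through the $S^1$-equivariant structure before smashing with $E$, which would be handled using the parameterized $S^1$-spectrum framework of \cite{MS}.
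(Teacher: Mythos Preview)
Your approach has a genuine gap at the very first step. You propose to give $\bst$ an $A$-bimodule structure via a composite $A \to \slm \to \Sigma^\infty (GL_1(\bs)_+)$, with the first map ``induced by the Thom diagonal.'' But the Thom diagonal is a map $A \to \slm \wedge A$, not $A \to \slm$; there is no natural ring map from the Thom spectrum $A = (\Omega M)^{\Omega f}$ to $\slm$, nor is there an augmentation $A \to \bs$, unless $\Omega f$ is already null. Consequently expressions like $\bs \wedge^L_A \bst$ in your cotrace are undefined. The same issue affects your ``adjoint diagonal $\tilde\Delta : A \to A \wedge A^{op}$'': the space-level map $\tilde\Delta : \Omega M \to \Omega M \times \Omega M$, $g \mapsto (g,g^{-1})$, when Thom-ified along $\Omega f$, has source $\slm$ (since $\Omega f \cdot (\Omega f)^{-1}$ is null), not $A$. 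So the relevant change-of-rings map is $\slm \to \aop$, and the argument cannot be run ``over $A$'' as you propose.

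The paper's proof confronts exactly this by never attempting to work over $A$. It works over $\slm$ via the ring map $\slm \to \aop$ just described, and invokes a separate result of the second author (Theorem~\ref{thm5.1}) giving an $\aop$-module equivalence $A \simeq \bs \wedge^L_{\slm} (\aop)$. The twisting bimodule is then $P = \bst \wedge^L_{\slm} (\aop)$, not $\bst \wedge A$, and the duality $A^! \simeq \Sigma^{-n}P$ follows by applying $(-)\wedge^L_{\slm}(\aop)$ to Klein's equivalence (\ref{kleinPD}), using that $\bs$ is a perfect $\slm$-module. The cotrace is also defined differently from your proposal: smoothness and $P \simeq \Sigma^n A^!$ give $THH(A,P) \simeq \Sigma^n THH^\bullet(A,A)$, and $\sigma$ is the $n$-fold suspension of the unit $\iota : \bs \to THH^\bullet(A,A)$. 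Your orientation sketch is closer in spirit to the paper's, though note that the paper builds $u$ from the $-TM$ Thom isomorphism alone (applied with $R = \aop$), and uses the $A$-orientation $\nu$ only in constructing $\tilde\sigma_E$ via the equivalences $THH(A,A)\wedge E \simeq THH(\slm,\slm)\wedge E \simeq \Sigma^\infty(LM_+)\wedge E$.
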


\noindent {\bf Remark. }
This theorem readily generalizes to the setting of generalized Thom spectra of maps to $BGL_1(R)$, where $R$ is a commutative ring spectrum.


\med
\begin{proof}
Let $\bst$ denote the sphere spectrum viewed as a    $\Sigma^\infty (\Omega M_+)$-module   as above.    
We first observe that by the twisted Poincar\'e duality theorem of Klein \cite{klein} as described in \cite{CohenKlein}  (see also \cite{DGI} and \cite{malm}), there is an equivalence of $\Sigma^\infty (\Omega M_+)$-modules

\begin{equation} \label{kleinPD}
Rhom_{\Sigma^\infty (\Omega M_+)} (\bs, \Sigma^\infty(\Omega M _+)) \simeq \Sigma^{-n} \bst
\end{equation}

Here $\Sigma^\infty (\Omega M_+)$ acts on  itself by left multiplication.   The reason this equivalence holds is the following.
The left-hand side describes the section spectrum  of the fiberwise suspension spectrum of the path-loop fibration, $\Omega M \to P(M) \to M$.     By Klein's theorem on Poincar\'e duality for parametrized spectra, the section spectrum (i.e twisted cohomology spectrum) associated to this parametrized spectrum is equivalent to a twisting by $-TM$ of the homology spectrum, 
$ 
 \Sigma^\infty (\Omega M_+)  \wedge^L_{\Sigma^\infty (\Omega M_+)}  \Sigma^{-n}\bst \simeq \Sigma^{-n} \bst.
$    This  equivalence is given by cap product with the Pontrjagin-Thom  class $t_M: \bs \to M^{-TM} = \Sigma^{-n} (\bs \wedge^L _{\Sigma^\infty (\Omega M_+)} \bst)$.

\med

Consider now the map of ring spectra $\Sigma^\infty  (\Omega M_+) \to A \wedge A^{op}$, induced on Thom spectra by the map $\tilde{\Delta}: \Omega M \to \Omega M \times \Omega M$ of (\ref{Delta}). The source of this map is indeed $\Sigma^\infty(\Omega M_+)$, as the composite

$$\xymatrix{
\Omega M \ar[r]^-{\tilde{\Delta}} & \Omega M \times \Omega M \ar[r]^-{\Omega f \times \Omega f} & BGL_1(\bs) \times BGL_1(\bs) \xr{multiply}   BGL_1(\bs)
}$$
is null homotopic.  The following is a result of the  second author (Theorem 5.1 of \cite{klang}).

\begin{theorem}\label{thm5.1}
Under this action of $\Sigma^\infty(\Omega M _+)$ on $A \wedge A^{op}$, there is an equivalence of $A \wedge A^{op}$-modules

\begin{equation}
A \simeq \bs \wedge^L _{\Sigma^\infty( \Omega M_+)} (A \wedge A^{op})
\end{equation}   Here the module structure on the right hand side is by right action on $A \wedge A^{op}$.
\end{theorem}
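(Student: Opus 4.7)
The strategy is to realize both sides of the desired equivalence as Thom spectra over $BGL_1(\bs)$ and identify them via a change of coordinates on $\Omega M\times\Omega M$. Recall that the Thom spectrum functor from $\mathrm{Top}/BGL_1(\bs)$ to spectra is a symmetric monoidal left adjoint, so it preserves homotopy colimits and in particular sends relative tensor products of $\Omega M$-spaces over $BGL_1(\bs)$ to relative smash products over $\Sigma^\infty(\Omega M_+)$. Under this identification, $A\wedge A^{op}$ is the Thom spectrum of the classifying map
$$c\colon \Omega M\times\Omega M\longrightarrow BGL_1(\bs),\qquad (h_1,h_2)\longmapsto \Omega f(h_1)\cdot\Omega f(h_2),$$
and $\bs\wedge^L_{\Sigma^\infty(\Omega M_+)}(A\wedge A^{op})$ is the Thom spectrum of the homotopy orbit space $E=(\Omega M\times\Omega M)_{h\Omega M}$ for the $\tilde\Delta$-action $g\cdot(h_1,h_2)=(gh_1,h_2g^{-1})$, equipped with the descended classifying map $\bar c\colon E\to BGL_1(\bs)$. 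Descent of $c$ requires the two composites $\Omega M\times(\Omega M\times\Omega M)\rightrightarrows \Omega M\times\Omega M\xrightarrow{c} BGL_1(\bs)$ (action and projection) to be coherently homotopic, which uses the $E_\infty$-structure on $BGL_1(\bs)$ together with the fact that $\Omega f$ is a loop map.

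Second, I would identify $(E,\bar c)$ with $(\Omega M,\Omega f)$ directly. The change of coordinates $(h_1',h_2')=(h_2h_1,h_1)$ turns the $\tilde\Delta$-action into the translation action $g\cdot(h_1',h_2')=(h_1',gh_2')$ on the second factor only, which is free with quotient $\Omega M$ via projection to $h_1'$; the obvious section $h_1'\mapsto (h_1',e)$ then realizes an explicit equivalence $E\simeq \Omega M$. In the new coordinates the map $c$ becomes
$$(h_1',h_2')\longmapsto \Omega f(h_2')\cdot\Omega f(h_1')\cdot\Omega f(h_2')^{-1},$$
which, evaluated at the section $h_2'=e$, reduces to $\Omega f(h_1')$; hence $\bar c\simeq \Omega f$. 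Applying the Thom spectrum functor then yields the claimed equivalence of underlying spectra $\bs\wedge^L_{\Sigma^\infty(\Omega M_+)}(A\wedge A^{op})\simeq A$.

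Finally, one must promote this to an equivalence of $A\wedge A^{op}$-modules. Both sides carry a natural right $A\wedge A^{op}$-action: on the left, $A$ is a bimodule under left and right multiplication; on the right, $A\wedge A^{op}$ acts on the right tensor factor of the bar construction. Under the Thom spectrum identification this corresponds to the translation action of $\Omega M\times\Omega M$ on itself over $BGL_1(\bs)$, and in the new coordinates the action sees $h_1'$ as the ``bimodule variable'' acted on by two-sided multiplication of the residual group. The main obstacle I anticipate is precisely this bimodule bookkeeping: one needs the homotopies used to descend $c$ across the $\tilde\Delta$-quotient, and the homotopy $\bar c\simeq\Omega f$ obtained from the section, to be compatible with the full $\Omega M\times\Omega M$-action, not merely the diagonal subgroup. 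This coherence is cleanest to handle in an $\infty$-categorical setup in which Thom spectra arise from the straightening of the cocartesian fibration classified by $BGL_1(\bs)$; there the Thom functor is symmetric monoidal at the $\infty$-categorical level, and the bimodule structure on the equivalence follows formally from the identification $(E,\bar c)\simeq (\Omega M,\Omega f)$ over $BGL_1(\bs)$ as an object in the appropriate $\infty$-category of $(\Omega M\times\Omega M)$-equivariant spaces.
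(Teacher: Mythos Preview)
The paper does not actually prove this theorem: it is quoted verbatim as ``Theorem 5.1 of \cite{klang}'' and used as a black box in the proof of Theorem~\ref{prop-thom-scy}. So there is no proof in the paper to compare against directly. That said, your approach is the natural one and is in the spirit of the argument the paper gives in the untwisted case (Theorem~\ref{OmegaM}), where the equivalence $E\simeq\Omega M$ is deduced from the homotopy fiber sequence $\Omega M\xrightarrow{\tilde\Delta}\Omega M\times\Omega M\to E\to M$ rather than from an explicit shearing. Your shearing map $(h_1,h_2)\mapsto(h_2h_1,h_1)$ makes that equivalence concrete and simultaneously tracks the classifying map, which is exactly what one needs to pass from the untwisted statement to the Thom-spectrum statement. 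The use of the colimit-preservation of the Thom functor to identify $\bs\wedge^L_{\Sigma^\infty(\Omega M_+)}(A\wedge A^{op})$ with the Thom spectrum of the homotopy orbit space is standard and correct.

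The one place where your outline is genuinely incomplete is the promotion to an $A\wedge A^{op}$-module equivalence, and you flag this yourself. The issue is not only that the nullhomotopy of $c\circ(\text{action})\simeq c\circ(\text{projection})$ must be coherent, but that the residual right $\Omega M\times\Omega M$-action in the new coordinates must be identified with the two-sided multiplication action on the factor $h_1'$ \emph{compatibly with that coherence}. Concretely, in original coordinates the right action is $(h_1,h_2)\cdot(k_1,k_2)=(h_1k_1,k_2h_2)$, and under your change of variables this becomes $(h_1',h_2')\cdot(k_1,k_2)=(k_2h_1'k_1,\,h_2'k_1)$; so the quotient by $h_2'$ does carry the correct bimodule action on $h_1'$, but the identification $\bar c\simeq\Omega f$ you obtain from the section $h_2'=e$ must be shown to be $(\Omega M\times\Omega M)$-equivariant over $BGL_1(\bs)$. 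Your suggestion to handle this in an $\infty$-categorical framework for Thom spectra is the right move, and is essentially how the cited reference proceeds; without it, the argument as written gives only an equivalence of underlying spectra.
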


Notice that $\bs$ is a perfect $\Sigma^\infty(\Omega M_+)$-module since $M$ is assumed to be compact. That is, $\bs$ is equivalent to a retract of a finite  $\Sigma^\infty(\Omega M_+)$-module. Applying $(-) \wedge^L _{\Sigma^\infty(\Omega M_+)}(A \wedge A^{op})$, we  can conclude from Theorem \ref{thm5.1} that $A$ is a retract of a finite  $A \wedge A^{op}$-module, hence a perfect $A \wedge A^{op}$-module. That is, $A$ is a smooth ring spectrum. Also because $\bs$ is perfect as a  $\Sigma^\infty(\Omega M_+)$-module, we can apply $(-) \wedge^L _{\Sigma^\infty ( \Omega M_+)} (A \wedge A^{op})$ to both sides of  the equivalence (\ref{kleinPD}) to obtain

\begin{equation}\label{kleq}
Rhom_{\Sigma^\infty ( \Omega M_+)}(\bs, A \wedge A^{op}) \simeq \Sigma^{-n} (\bst \wedge^L _{\Sigma^\infty (\Omega M_+)} A \wedge A^{op}).
\end{equation}

We now take our twisting bimodule to be $P = \bst \wedge^L _{\Sigma^\infty ( \Omega M_+)} A \wedge A^{op}$. Notice that if the original map$  f$ is null homotopic, this agrees with the twisting bimodule given in the proof of Theorem \ref{OmegaM}. Furthermore, by the Thom isomorphism for $-TM$, 

$$H\mathbb{Z}/2 \wedge P \simeq H\mathbb{Z}/2 \wedge A.$$
This is an equivalence of $A$-bimodules because the equivalence in Theorem  \ref{thm5.1} is.    
Moreover, by the above theorem, $  Rhom_{\aop} (A, \aop)    \simeq Rhom_{\aop}((\aop) \wedge^L_{\Sigma^\infty (\Omega M_+)}\bs, \, \aop) \simeq    Rhom_{\Sigma^\infty (\Omega M_+)}(\bs, A \wedge A^{op}).$       So the  equivalence (\ref{kleq}) becomes 

\begin{equation}\label{bimoddual}A^! = Rhom_{A \wedge A^{op}}(A, A \wedge A^{op}) \simeq \Sigma^{-n} P. \end{equation}

Our goal is to show that this equivalence is given by taking the cap product with an appropriate $n$-dimensional cotrace map $\sigma : \Sigma^n\bs \to THH(A,P)$ which we now define.

\med
Since $A$ is smooth, 
\begin{align}\label{thhap}
 THH(A, P) &= \Sigma^n Rhom_{\aop}(A, \aop) \wedge^L_{\aop} A  \simeq  \Sigma^nRhom_{\aop} (A, A) \\
&\simeq \Sigma^nTHH^{\bullet}(A, A) \notag 
 \end{align}
where the last quantity is the topological Hochschild cohomology.  

The inverse equivalence also has a very natural description.  
Consider the  $\Sigma^\infty (\Omega M_+)$-module structure on $A =  (\Omega M)^{\Omega f}$ given by the generalized conjugation action defined to be the pull-back of the $A\wedge A^{op}$-action on $A$ to $\Sigma^\infty (\Omega M_+)$ via the ring map $\Sigma^\infty (\Omega M_+) \to \aop$ defined above.  This action was studied in detail by the second author in \cite{klang}.  In \cite{klang} the second author showed that there are equivalences, 
\begin{align}\label{ac}
THH (A, A) &\simeq \bs \wedge^L_{\sgm} A^c \\
THH^{\bullet}(A, A) &\simeq  Rhom_{\Sigma^\infty (\Omega M_+)} (\bs, A^c) \notag
\end{align}
where $A^c$ denotes the algebra $A$ with this generalized conjugation action of $\sgm$.  

Recall the cap product operation
\begin{equation}
 \left(Rhom_{\Sigma^\infty (\Omega M_+)} (\bs, A^c) \right) \wedge \left( \bs \wedge^L_{\sgm} \bst \right)  \xr{\cap} \bst \wedge^L_{\sgm} A^c \simeq THH(A, P).
 \end{equation}  
 Now $\bs \wedge^L_{\sgm} \bst  = \Sigma^n\mtm$ so there is a Pontrjagin-Thom map $S^n \xr{t_M} \Sigma^n\mtm$ which corresponds to the unit $\iota \in M^\vee$ under the Atiyah-equivalence of the Spanier-Whitehead dual of a manifold $M^\vee$ and its Thom spectrum $\mtm$.    We then get an induced equivalence
 
 \begin{align}\label{inverse}
 &THH^{\bullet}(A, A) \wedge S^n  \simeq  \left(Rhom_{\Sigma^\infty (\Omega M_+)} (\bs, A^c) \right) \wedge S^n  \xr{1 \wedge t_M}   \\  
 & \left(Rhom_{\Sigma^\infty (\Omega M_+)} (\bs, A^c) \right) \wedge \left( \bs \wedge^L_{\sgm} \bst \right) 
 \xr{\cap} \bst \wedge^L_{\sgm} A^c \simeq THH(A, P). \notag 
\end{align}
 This is the inverse to the equivalence above (\ref{thhap}).

 Now $THH^\bullet(A,A)$ is an $E_2$-ring spectrum. Let $\iota : \bs \to THH^\bullet(A,A)$ be the unit. Alternatively, recall that $THH^\bullet(A,A)$ is the spectrum of $A$-bimodule maps $A \to A$; $\iota$ corresponds to the identity map  $id: A \to A$, a characterization which does not rely on the multiplicative structure on $THH^\bullet(A,A)$. The map $\iota$ allows us to define our $n$-dimensional cotrace map as 
  \begin{equation}\label{sigma}
  \sigma : \Sigma^n\bs \xr{\Sigma^n\iota } \Sigma^nTHH^\bullet (A,A) \simeq THH(A,P).
  \end{equation}
 
Clearly taking cap product
$$
\cap \sigma :  A^! =   Rhom_{\aop}(A, \aop)  \to  \Sigma^{-n} P
$$
defines the equivalence given in (\ref{bimoddual}).  This then proves that $(A, P, \sigma)$ is a twisted, smooth Calabi-Yau ring spectrum.

\med
Let $E$ be any commutative ring spectrum satisfying the hypotheses of the theorem.   An $E_*$-orientation of $M$ is given by a Thom class $\tau : \mtm \to E$  and induces an   an equivalence (\ref{diag})  $\theta_\tau : \Sigma^n \mtm \wedge E \xr{\simeq} M_+ \wedge E$ as in (\ref{diag}).  This can be also be written as an equivalence 
$$
\theta_\tau : (\bst \wedge^L_{\Sigma^\infty (\Omega M_+)} \bs ) \wedge E \xr{\simeq}   (\bs \wedge^L_{\Sigma^\infty (\Omega M_+)} \bs) \wedge E.
$$
Given such an orientation  $\tau : E \to M$ and an orientation $\nu : A = (\Omega M)^{\Omega f} \to E$ we describe a resulting  $E_*$-orientation $(u, \tilde \sigma_E)$ of the $sCY$ structure on $A = (\Omega M)^{\Omega f}$.  

\med
Notice that the $E_*$ orientation $\tau$ of $M$ induces an equivalence   
$$
\theta_{\tau, R} : (\bst \wedge^L_{\Sigma^\infty (\Omega M_+)} R)\wedge E \xr{\simeq}  (\bs \wedge^L_{\Sigma^\infty (\Omega M_+)} R)\wedge E
$$
for any left $\Sigma^\infty (\Omega M_+)$-module $R$.   Now take $R = \aop$ with the $\Sigma^\infty (\Omega M_+)$-action defined by the ring homomorphism $\Sigma^\infty (\Omega M_+) \to \aop$ described above.  We then define 
\begin{equation}\label{udef}
u = \theta_{\tau, \aop} :  P \wedge E =  (\bst \wedge^L_{\Sigma^\infty (\Omega M_+)} \aop)\wedge E \xr{\simeq}  (\bs \wedge^L_{\Sigma^\infty (\Omega M_+)} \aop)\wedge E = A\wedge E.
\end{equation}

\med
We now define the map $\tilde \sigma_E : \bs \wedge E \to THH (A, A)^{hS^1} \wedge E$ needed for the orientation of the $sCY$-structure.

Again, let $\tau : \mtm \to E$ and $\nu : A = (\Omega M)^{\Omega f} \to E$ be orientations of $M$ and $\Omega f$ respectively. Consider the following homotopy commutative diagram.

$$
\begin{CD}
\Sigma^n \bs \wedge E @>\iota >> \Sigma^n THH^\bullet(A, A) \wedge E   @>\simeq >> THH(A, P) \wedge E  \\
&&& & @V\tau V\simeq V  \\
&&@V= VV   THH (A, A) \wedge E  \\
& &     & &  @VV= V    \\
 @A= AA \Sigma^n Rhom_{\sgm}(\bs, A^c) \wedge E     @>\simeq >>      \left(\bs \wedge^L_{\sgm} A^c\right)\wedge E  \\
& & @V \nu V \simeq V     @V \simeq V \nu V \\
& &\Sigma^n Rhom_{\sgm}(\bs, (\sgm)^c) \wedge E     @>\simeq >>      \left(\bs \wedge^L_{\sgm} (\sgm)^c\right)\wedge E \\
&& @AA\iota \wedge 1 A    @AA\iota \wedge 1 A \\
\Sigma^n \bs \wedge E @>\iota >> \Sigma^n Rhom_{\sgm}(\bs, \bs) \wedge E  @>>\simeq >  \left(\bs \wedge^L_{\sgm} \bs\right)\wedge E \\
& & @A= AA     @AA = A \\
& & \Sigma^n M^\vee \wedge E  @>>\simeq > M_+\wedge E
\end{CD}
$$
A few comments about this diagram:
\begin{enumerate}
\item The maps in this diagram that are labelled by a $\tau$ or a $\nu$ are induced by the respective orientations.  The maps
labeled by  $\iota$ are induced by the units of the respective ring spectra. 
\item The reason the left side of this diagram homotopy commutes is because the   vertical maps induced by both $\nu$ and $\iota$  are  all  maps of ring spectra.  As pointed out above this is because the orientation map $\nu : A \to E$ is assumed to be a ring map.
\item The bottom horizontal composition  $\Sigma^n \bs \wedge E \to M_+ \wedge E$ is the $E_*$-fundamental class.  The reason the right side of this diagram homotopy commutes is by the naturality of the Atiyah-Klein equivalences.  
\item The homotopy orbit spectrum $\bs \wedge^L_{\sgm} (\sgm)^c$ is equivalent to $\Sigma^\infty (LM_+)$, and the lower right hand vertical map $M_+ \wedge E \to  \left(\bs \wedge^L_{\sgm} (\sgm)^c\right)\wedge E \simeq \Sigma^\infty (LM_+) \wedge E$
is homotopic to the inclusion of the constant loops, and therefore factors through the homotopy fixed point set of the circle action.
$ M_+ \wedge E  \to \Sigma^\infty (LM_+)^{hS^1}$. 
\item The top horizontal composition is the cotrace map $\sigma \wedge 1 : \Sigma^n \bs  \wedge E \to THH(A, P)\wedge E$.
\end{enumerate}

We therefore define the map $\ \tilde \sigma_E: \Sigma^n E  \to  THH(A, \,  A)^{hS^1} \wedge E$ to be the composition
$$
\begin{CD}
\tilde \sigma_E : \Sigma^n \bs \wedge E @>[M]>>   M_+\wedge E @>>> \Sigma^\infty (LM_+)^{hS^1}  \wedge E  \\
  @>\simeq >>   THH(\sgm, \sgm)^{hS^1} \wedge E   @<\simeq <\nu <  THH(A, A)^{hS^1}\wedge E. \ 
\end{CD}
$$
By comments 3. and 4. above, the composite $ \Sigma^n \bs \wedge E  \xr{\tilde \sigma_E} THH(A, A)^{hS^1} \wedge E \hk THH(A, A) \wedge E$ is   obtained by starting at the lower left of the diagram, going horizontally to the lower right,
and then going vertically until $THH(A, A)\wedge E$.  And by comment 1  above  and the commutativity of the diagram, this means we may conclude that the following diagram homotopy commutes:
$$
\begin{CD}
\Sigma^n \bs \wedge E @>\sigma \wedge 1 >> THH(A, P) \wedge E @>\tau >\simeq > THH(A, A) \wedge E \\
@A= AA   &&  @AAA \\
\Sigma^n \bs \wedge E   &  @>>\tilde \sigma_E > & THH(A, A)^{hS^1}\wedge E.
    \end{CD}
$$

This is what was required to show that $(u, \tilde \sigma_E)$ defines an orientation on the twisted Calabi-Yau structure on $A$. 
 \end{proof}

\med

\medskip

\noindent {\bf Example.} Take $M = SU(m)/SO(m)$, and $f: M \to SU/SO \simeq B^2 O$ the natural map. Then

$$\Omega f: \Omega (SU(m)/SO(m)) \to BO$$
is a loop map, and by Theorem \ref{prop-thom-scy}, $A = \Omega M ^{\Omega f}$ has the structure of a twisted sCY ring spectrum.

Note that $\Omega f: \Omega (SU(m)/SO(m)) \to BO$ induces a map of Thom spectra $A \to MO$.
$\Omega f$ is an equivalence in a range, and therefore induces an equivalence in a range between $A$ and $MO$. $\pi_*(MO)$ is 2-torsion, therefore $\pi_*(A)$ is 2-torsion in a range; equivalently, $A$ is highly connected when localized at an odd prime. This is not the case if $f$ is taken to be nullhomotopic, in which case the $ sCY$ ring spectrum is $\Sigma^\infty (\Omega (SU(m)/SO(m))_+)$.
 
\subsection{The image of $J$ and Lagrangian immersions of spheres} 

In this subsection we study in more detail the twisted smooth Calabi-Yau structure on Thom spectra of virtual bundles over spheres.  These bundles arise naturally from the homotopy perspective from the image of the $J$-homomorphism, and from the perspective of symplectic topology from Lagrangian immersions of odd dimensional spheres into their cotangent bundles. 

As discussed in \cite{ak}, Gromov's $h$-principle   implies that the homotopy group $\pi_n(U)$ classifies Lagrangian immersions of $S^{n}$ into its cotangent bundle, $T^*S^{n}$, which are in the homotopy class of the zero section $S^n \hk T^*(S^n)$.    Assume that $n > 1$ and let  $\alpha : S^n \to U$ represent such a homotopy class. Since $\pi_n(SU) \cong \pi_n(U)$, $\alpha$ lifts to a unique (up to homotopy) map that by abuse of notation we still call $\alpha : S^n \to SU$.  Taking loop spaces we get a map of 
$A_\infty$ group-like monoids,
$$
\Omega \alpha : \Omega S^n \to \Omega SU \simeq BU.
$$
The last equivalence is given by Bott periodicity.  By forgetting the almost complex structure we get an $A_\infty$-map
$$
\Omega \alpha :  \Omega S^n \to BO.
$$
By Theorem \ref{prop-thom-scy}  above, the Thom spectrum $\thomsn$ has the structure of a twisted,  smooth Calabi-Yau ring spectrum.  We begin with the following observation.

\begin{lemma}\label{sorient}  The twisted sCY ring spectrum $\thomsn$ has natural orientation with respect to   stable homotopy theory (that is  the generalized homology theory $\bs_*$ represented by the sphere spectrum $\bs$).      Furthermore this induces an orientation with respect to any generalized homology theory $E_*$ represented by a commutative ring spectrum $E$.
\end{lemma}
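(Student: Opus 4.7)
The plan is to verify the two components of an $\bs$-orientation of the twisted $sCY$ structure $(A, P, \sigma)$ on $A = \thomsn$, exploiting the stable parallelizability of $S^n$. The extension to an arbitrary commutative ring spectrum $E$ will then follow from naturality under the unit map $\bs \to E$.

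First I would produce the $A$-bimodule equivalence $u : P \wedge \bs \simeq A \wedge \bs$, which reduces to $u : P \simeq A$. Recall from the proof of Theorem~\ref{prop-thom-scy} that $P = \bs_{-\tau_{S^n}} \wedge^L_{\Sigma^\infty(\Omega S^n_+)} (A \wedge A^{op})$, where $\bs_{-\tau_{S^n}}$ is the sphere equipped with the $\Sigma^\infty(\Omega S^n_+)$-module structure induced by the loop map $-\tau_{S^n} : \Omega S^n \to GL_1(\bs)$. Since $S^n$ is stably parallelizable ($TS^n \oplus \epsilon \cong \epsilon^{n+1}$), the classifying map $-\tau_{S^n} : S^n \to BGL_1(\bs)$ is null-homotopic, so $-\tau_{S^n}$ is null as a loop map and $\bs_{-\tau_{S^n}} \simeq \bs$ with trivial $\Sigma^\infty(\Omega S^n_+)$-action. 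Applying Theorem~\ref{thm5.1} then gives $P \simeq \bs \wedge^L_{\Sigma^\infty(\Omega S^n_+)} (A \wedge A^{op}) \simeq A$ as $A$-bimodules.

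Next I would construct the lift $\tilde\sigma_\bs : \Sigma^n \bs \to THH(A,A)^{hS^1}$ of $u_* \circ \sigma$ through the natural inclusion $j : THH(A,A)^{hS^1} \to THH(A,A)$. Recall from the proof of Theorem~\ref{prop-thom-scy} that $\sigma$ is assembled from the unit $\iota : \bs \to THH^\bullet(A,A)$ classifying the identity bimodule map $\mathrm{id}_A : A \to A$, the Pontrjagin--Thom class $t_{S^n} : \Sigma^n \bs \to (S^n)^{-TS^n}$, and the Atiyah--Klein duality $\Sigma^n THH^\bullet(A,A) \simeq THH(A,P)$. The key observation is that $\iota$ lies in the $S^1$-fixed subspectrum of $THH^\bullet(A,A)$, since $\mathrm{id}_A$ is manifestly $S^1$-equivariant, and likewise the inclusion of constant loops $S^n \hookrightarrow LS^n$ used in the formulation of $t_{S^n}$ factors through the $S^1$-fixed points. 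Tracking these $S^1$-fixed data through the diagram in the proof of Theorem~\ref{prop-thom-scy} produces $\tilde\sigma_\bs$, with the stable framing of $S^n$ supplying the Thom class $\tau : (S^n)^{-TS^n} \to \bs$ required by that proof and playing the role of the ring map $\nu$.

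For an arbitrary commutative ring spectrum $E$, the unit $\bs \to E$ is a map of commutative ring spectra, so smashing the pair $(u, \tilde\sigma_\bs)$ with $E$ yields the $E$-orientation $(u \wedge 1_E, \tilde\sigma_\bs \wedge 1_E)$. The main obstacle I anticipate is in the second step: verifying that the $S^1$-equivariance of $\iota$ and of the constant-loop inclusion is genuinely preserved under the Atiyah--Klein duality equivalence $\Sigma^n THH^\bullet(A,A) \simeq THH(A,P)$ and the change-of-rings appearing in the construction of $\sigma$, which will require carefully tracking the cyclic structure on $THH$ through each identification.
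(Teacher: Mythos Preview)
Your construction of the bimodule equivalence $u : P \simeq A$ is exactly what the paper does: use the stable framing of $S^n$ to trivialize $\bs_{-\tau_{S^n}}$ and then invoke Theorem~\ref{thm5.1}.

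The gap is in your construction of $\tilde\sigma_{\bs}$. You propose to run the diagram from the proof of Theorem~\ref{prop-thom-scy}, with the stable framing of $S^n$ supplying the Thom class $\tau : (S^n)^{-TS^n} \to \bs$ and ``playing the role of the ring map $\nu$''. But $\nu$ in that diagram is an $E$-orientation of the Thom spectrum $A = \thomsn$ itself, i.e.\ a ring map $\thomsn \to \bs$; the stable framing of $S^n$ does not supply this. Such a $\nu$ exists only when the composite $\Omega S^n \xrightarrow{\Omega\alpha} BU \to BO \xrightarrow{BJ} BGL_1(\bs)$ is null, and for the classes $\alpha$ of interest this fails (indeed, the subsequent application to Lagrangian immersions depends on this composite being nontrivial). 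Without $\nu$ the vertical maps labelled $\nu$ in that diagram are not equivalences, and the argument collapses. Relatedly, the claim that ``$\iota$ lies in the $S^1$-fixed subspectrum of $THH^\bullet(A,A)$'' is not well-posed: the cyclic $S^1$-action lives on $THH(A,A)$, not on $THH^\bullet(A,A)$, so there is no $S^1$-action with respect to which $\iota$ can be declared fixed.

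The paper avoids this by using a completely different input for the lift: the Blumberg--Cohen--Schlichtkrull equivalence $THH(\thomsn) \simeq L(S^n)^{\ell(\alpha)}$ (Proposition~\ref{BCS}), together with the observation that the virtual bundle $\ell(\alpha)$ is canonically trivialized over the constant loops $S^n \hookrightarrow LS^n$. Since constant loops are $S^1$-fixed, this immediately gives a map $\Sigma^\infty(S^n_+) \to \big(L(S^n)^{\ell(\alpha)}\big)^{hS^1} \simeq THH(A,A)^{hS^1}$ lifting the cotrace. This route does not require any orientation of $\thomsn$ itself.
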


\begin{proof} First note that $S^n$ has a canonical stable framing.  That is, it has a canonical $\bs$-orientation.  This induces an orientation with respect to any theory $E_*$.   Furthermore,  by the construction of the twisted sCY structure in the proof of Theorem \ref{prop-thom-scy}, the twisting bimodule of this structure is
$$
P = \bst \wedge^L_{\Sigma^\infty (\Omega S^n_+)} A \wedge A^{op}    
$$
where $A = \thomsn$.   Now the $\bs$-framing of $S^n$ defines an    equivalence of bimodules, $\bs \simeq \bst$.  Thus
$$
P \simeq \bs\wedge^L_{\Sigma^\infty (\Omega S^n_+)} A \wedge A^{op} 
$$
but by (\ref{thm5.1}) this last spectrum is equivalent to $A$ as $A$-bimodules.  Thus we have an equivalence
$$
u : P \wedge \bs \xr{\simeq} A\wedge \bs.
$$

Using this identification,  the cotrace element can be viewed as a class $\sigma \in THH(A, A)$.  To complete the construction of the $\bs_*$-orientation we must show
that $\sigma$ lifts to an element in the homotopy fixed points, $THH(A, A)^{hS^1}$. 
   
   By the main result of \cite{BCS}, the topological Hochschild homology of  $A = \thomsn$ is equivalent as a $\Sigma^\infty(S^1_+)$-module to the Thom spectrum of a virtual bundle over the free loop space $LS^n$:
   
   \med
   \begin{proposition}\label{BCS}\cite{BCS}
   $$ THH (\thomsn) \simeq L(S^n)^{\ell (\alpha)}.$$
where $\ell (\alpha )$ is the virtual bundle classified by the map
   $$
   \ell (\alpha )  :  L(S^n)  \xr{L\alpha} LSU \simeq SU \times \Omega SU \xr{project} \Omega SU \simeq BU \to BO.
   $$
      \end{proposition}
   In this composition, the equivalence $LSU \simeq SU \times \Omega SU$ is given by the trivialization of the fibration of infinite loop spaces $\Omega SU \to LSU \to SU$ defined by the canonical section $SU \to LSU$ given by the inclusion of constant loops, and the infinite loop structure of $LSU$.

   \med
   \noindent {\bf Remark. }   In \cite {BCS} the map from $LSU$ to $BO$ was described by a composition
   $$
   LSU \to L(SU/SO) \simeq  SU/SO \times \Omega (SU/SO) \xr{\eta \times 1} \Omega (SU/SO) \times \Omega (SU/O) \xr{multiply} \Omega (SU/SO) \simeq BO
   $$
   where $\eta : SU/SO \to \Omega (SU/SO) \simeq BO$ was induced by the Hopf map $S^3 \to S^2$.  However the map $\eta$  becomes trivial when composed with the projection
   $SU \to SU/SO$ which allows the description of $\ell (\alpha )$ given in the proposition.

\med
Notice that the restriction to the constant loops,
$$
S^n \xr{\iota}  LS^n \xr{\ell (\alpha)} BO
$$
is the constant map.  That is, this virtual bundle is trivialized when restricted to the constant loops.  But since constant loops are $S^1$-fixed points of $LS^n$, the inclusion naturally lifts to the homotopy fixed points,
$$
\Sigma^\infty (S^n_+) \xr{\iota } (L(S^n)^{\ell (\alpha)})^{hS^1}.
$$
Composing with the equivalence given by Proposition \ref{BCS}, this defines a map $\tilde \sigma : \Sigma^\infty (S^n) \to THH(A, A)^{hS^1}$ that lifts the cotrace element $\sigma \in THH(A, A)$.   

 This completes the construction of the $\bs_*$-orientation of the sCY structure on $A = \thomsn$.   Given any other generalized homology theory $E_*$ represented by a commutative ring spectrum $E$, the $\bs_*$-orientation of $\thomsn$ induces an $E_*$ orientation by use of the unit $\bs \to E$.   This completes the proof of Lemma \ref{sorient}.
\end{proof}

The following recasts the results of \cite{ak} to show that topological Hochschild homology can be used as an obstruction to being able to deform a Lagrangian immersion of a sphere to a Lagrangian embedding.

\med
\begin{theorem}\label{lagTHH} Let $\alpha : S^n \to U$ represent a Lagrangian immersion $\phi_\alpha : S^n \to T^*S^n$   in the homotopy class of the zero section.  Consider the associated twisted smooth Calabi-Yau ring spectrum $(\Omega S^n)^{\Omega (-\alpha)}$. (Here $-\alpha : S^n \to U$ is a map that represents the inverse of $\alpha$ in $\pi_nU$.) Then if $\phi_\alpha$ is Lagrangian isotopic to a Lagrangian embedding then there is an equivalence of topological Hochschild homology spectra,
$$
THH ((\Omega S^n)^{\Omega (-\alpha)}) \simeq THH(\Sigma^\infty (\Omega S^n_+)).
$$
\end{theorem}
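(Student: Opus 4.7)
The plan is to reduce the theorem to a statement about the free loop space via Proposition~\ref{BCS} and then invoke the main result of Abouzaid--Kragh \cite{ak}.

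First, I would apply Proposition~\ref{BCS} to both sides of the desired equivalence. For the ring spectrum $(\Omega S^n)^{\Omega(-\alpha)}$ this gives
$$ THH((\Omega S^n)^{\Omega(-\alpha)}) \simeq L(S^n)^{\ell(-\alpha)}, $$
where $\ell(-\alpha) : LS^n \to BO$ is the virtual bundle obtained by the composite $LS^n \xrightarrow{L(-\alpha)} LSU \simeq SU \times \Omega SU \to \Omega SU \simeq BU \to BO$. When $\alpha$ is null-homotopic, so is $-\alpha$ and so is $\ell(-\alpha)$, yielding $THH(\Sigma^\infty(\Omega S^n_+)) \simeq \Sigma^\infty(LS^n_+) \simeq L(S^n)^0$. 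Thus the theorem reduces to showing that, under the Lagrangian isotopy hypothesis, the map $\ell(-\alpha)$ is null-homotopic, so that the two Thom spectra agree.

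Next, I would unpack the structure of $\ell(-\alpha)$. By construction it uses the splitting $LSU \simeq SU \times \Omega SU$ from the constant-loop section and the infinite loop space structure on $\Omega SU$. The restriction of $\ell(-\alpha)$ to constant loops $S^n \hookrightarrow LS^n$ is already null-homotopic, as was observed in the proof of Lemma~\ref{sorient}. The residual content of $\ell(-\alpha)$ is therefore captured by the loop map
$$ \Omega(-\alpha) : \Omega S^n \to \Omega SU \simeq BU \to BO, $$
so that trivializing this loop map produces a null-homotopy of $\ell(-\alpha)$, and hence a trivialization of the Thom spectrum.

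The essential input is then the theorem of Abouzaid and Kragh \cite{ak}, which states that if the Lagrangian immersion $\phi_\alpha$ of $S^n$ into $T^*S^n$ is Lagrangian isotopic to a Lagrangian embedding, then the underlying real loop map $\Omega \alpha : \Omega S^n \to BU \to BO$ (equivalently $\Omega(-\alpha)$, since loop-inversion acts by the equivalence $-1$ on $BO$) is null-homotopic as an $A_\infty$-map. Feeding this triviality into the composite defining $\ell(-\alpha)$ produces the desired null-homotopy, completing the equivalence of Thom spectra and hence of topological Hochschild homologies.

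The main obstacle, and the reason the result is non-trivial, is the translation of Abouzaid--Kragh's symplectic-topological statement into the homotopy-theoretic form needed here: verifying that their obstruction really is the loop map $\Omega\alpha : \Omega S^n \to BO$ produced by Bott periodicity applied to the $h$-principle classifying map, and not merely a formally analogous invariant. Once this identification is in place, the rest of the argument is a bookkeeping exercise combining Proposition~\ref{BCS} with the Thom isomorphism for a null-homotopic virtual bundle.
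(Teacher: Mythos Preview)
Your approach and the paper's differ in a meaningful way, and both lead to the result, but the logical structure is not the same.

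The paper does not try to show that $\ell(-\alpha)$ is null-homotopic. Instead it invokes a specific spectrum-level map: Kragh's Viterbo-type transfer
\[
\psi : \Sigma^\infty(L_0 N_+)\;\longrightarrow\; L_0 Q^{\,TN - TL \oplus \nu}
\]
from \cite{kr}, and cites \cite{ak} for the fact that $\psi$ is a homotopy equivalence whenever the Lagrangian is embedded. It then identifies the Maslov bundle $\nu$ on $L_0 S^n$ with $-\ell(\alpha)$ and applies Proposition~\ref{BCS}. This produces an equivalence of Thom spectra over $LS^n$ without ever asserting that the underlying classifying map is trivial; an equivalence of Thom spectra does not, of course, force the virtual bundles to agree.

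Your route instead aims for the stronger conclusion that $\Omega(-\alpha):\Omega S^n\to BO$ is null as a loop map. This is indeed what the headline theorem of \cite{ak} supplies: the stable Lagrangian Gauss map $S^n\to U/O$ of a nearby Lagrangian vanishes, and looping that null-homotopy (using the Bott identification $\Omega(U/O)\simeq \mathbb{Z}\times BO$, under which $\Omega U\to\Omega(U/O)$ becomes realification $BU\to BO$) gives precisely the $A_\infty$ null-homotopy you want. So your input from \cite{ak} is correctly identified, and your approach actually yields more than the theorem asks: it gives an equivalence of \emph{ring spectra} $(\Omega S^n)^{\Omega(-\alpha)}\simeq \Sigma^\infty(\Omega S^n_+)$ before taking $THH$.

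One comment on the write-up: the detour through $\ell(-\alpha)$ is unnecessary and the ``residual content'' step is the weakest link, since $LS^n$ does not split as $S^n\times\Omega S^n$, so knowing $\ell(-\alpha)$ is trivial on constant loops and on the based-loop fiber does not by itself force it to be trivial globally. You do not need this. Once $\Omega(-\alpha):\Omega S^n\to BO$ is null as a loop map, the Thom ring spectra are equivalent directly and you are done; there is no need to pass to the free loop space at all. If you prefer to argue via $\ell(-\alpha)$, the clean way is to observe that null-homotopy of the Gauss map $S^n\to U/O$ makes $L(\text{Gauss map})$ null, and $\ell(\alpha)$ factors through it.
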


\begin{proof}   Let $Q$ and $N$ be smooth, closed manifolds of the same dimension. Given an exact Lagrangian embedding $j : Q \to T^*N$,  Kragh in \cite{kr} defined  
a virtual Maslov bundle $\nu$ on $L_0Q$.  Here $L_0$ denotes path component of the free loop space that contains constant loops.  The construction, which uses notation that is different than ours, is described in section 2 of \cite{ak}.   A map of spectra 
\begin{equation}\label{equiv}
\psi : \Sigma^\infty(L_0N_+)  \to L_0Q^{TN -TL\oplus \nu}
\end{equation}
was constructed and studied. One of the main results of \cite{ak} is that the map $\psi$ is a homotopy equivalence of spectra.  
The Maslov bundle $\nu$ was defined as follows.      (See section 2 of \cite{ak}.)   The Lagrangian embedding $Q\to T^*N$ defines a map $\tau :  Q \to U/O$.  Then $-\nu$ was defined to be the restriction to $L_0Q$ of the map
$$
LQ \xr{L\tau}  L(U/O) \xr{\simeq} U/O \times \Omega U/O \xr{project}\Omega U/O \simeq \bz \times BO.
$$
In considering  a Lagrangian embedding or immersion  $\phi_\alpha : S^n \to T^*S^n$ represented by $\alpha : S^n \to U$, then by Proposition \ref{BCS} the Maslov bundle $\nu$ is just $-\ell (\alpha) : LS^n \to BU \to BO.$    Thus we may conclude from (\ref{equiv}) that  if the Lagrangian immersion $\phi_\alpha$ is Lagrangian isotopic to a Lagrangian embedding, then the spectra
$$
\Sigma^\infty(LS^n_+)   \quad \text{and} \quad (LS^n)^{-\ell (\alpha)} = (LS^n)^{\ell(-\alpha)}
$$
are equivalent.


Now the spectrum $\Sigma^\infty(LS^n_+) $ is equivalent to the topological Hochschild homology $THH(\Sigma^\infty(\Omega S^n_+))$.  Whereas by Proposition \ref{BCS}, the spectrum $(LS^n)^{\ell (-\alpha)}$ is equivalent to the topological Hochschild homology $THH((\Omega S^n)^{\Omega(-\alpha)} )$.  The statement of the theorem now follows.
\end{proof}

For $k > 1$, let $\alpha_k : S^{2k+1} \to U$ be a generator of $\pi_{2k+1}(U)$ which by Bott periodicity is isomorphic to the integers.   In     \cite{ak} it was proved  that for $2k+1$  congruent to $1$, $3$, or $5$  $ modulo \, 8$, the Lagrangian immersion  $\phi_k : S^{2k+1} \to T^*(S^{2k+1})$ represented by $\alpha_k$ is not Lagrangian isotopic to a Lagrangian embedding.  We now see that this is detected by the fact the twisted smooth Calabi-Yau ring spectra $\Sigma^\infty (\Omega S^{2k+1}_+)$ and $\thomk$ have different topological Hochschild homologies.  For this we again use the fact that 
$$
THH(\Sigma^\infty (\Omega S^{2k+1}_+)) \simeq \Sigma^\infty (LS^{2k+1}_+)    \quad \text{and} \quad  THH(\thomk) \simeq (LS^{2k+1})^{\ell (-\alpha_k)}.
$$

In \cite{susp}, it was shown that the free loop space of the suspension of a connected, based space $X$ has a model
$$
L\Sigma X \simeq \bigcup_n S^1 \times_{\bz/n} X^n  / \sim
$$
where the equivalence relation $\sim $ is a basepoint identification.   This space is equivalent to configuration space of points in $S^1$ with labels in $X$:
\begin{equation}\label{cs1}
C(S^1, X) = \bigcup_n F(S^1, n) \times_{\Sigma_n} X^n / \sim \quad  \simeq L\Sigma X.
\end{equation}
where $ F(S^1, n)$ is the configuration space of $n$ ordered distinct points in $S^1$. It was also shown in \cite{susp} that the suspension spectrum of these filtered models stably split:
$$
\Sigma^\infty (L\Sigma X_+) \simeq \bs \vee \Sigma^\infty\left( \bigvee_n S^1_+ \wedge_{\bz/n} X^{(n)}\right)
$$
where  $X^{(n)}$ denotes the $n$-fold smash product. 

Thus we have 
\begin{equation}\label{split}
THH(\Sigma^\infty (\Omega S^{2k+1}_+)) \simeq \Sigma^\infty (LS^{2k+1}_+)  \simeq \bs \vee \Sigma^\infty \left(\bigvee_n S^1_+ \wedge_{\bz/n} S^{2kn}\right).
\end{equation}

On the other hand, applying   a result of Lewis in \cite{LMS} to (\ref{cs1}) says that the   Thom spectrum $ (LS^{2k+1})^{\ell (-\alpha_k)}$ has the homotopy type of
the spectrum $C(S^1, (S^{2k})^{b\alpha_k})$ where here
$
C(S^1, -)$ is viewed as a functor from the category of spectra $E$ with unit $\bs \to E$, to spectra.   $(S^{2k})^{b\alpha_k}$ is the Thom spectrum of the map
$$
b\alpha_k : S^{2k} \xr{\iota} \Omega \Sigma S^{2k} = \Omega S^{2k+1} \xr{\Omega (-\alpha_k)} \Omega U \simeq \bz \times BU \to \bz \times BO.
$$
 Here $\iota : S^{2k} \to \Omega \Sigma S^{2k}$ is the adjoint of the identity on $\Sigma S^{2k}$.  $\iota$ generates $\pi_{2k}\Omega \Sigma S^{2k}  \cong \bz$.  Note  also that by the connectivity of $S^{2k}$  the image of  $b\alpha_k$ lies in $\{0\} \times BO$. 
 
 Think of $\Sigma_{m-1}$ as the subgroup of $\Sigma_m$ consisting of those permutations of $m$-letters that leave the last letter fixed.  Then as in \cite{LMS}, the spectrum $ C(S^1, (S^{2k})^{b\alpha_k}) \simeq  (LS^{2k+1})^{\ell (-\alpha_k)} $ is defined to be the homotopy coequalizer of the maps
 $$
 \beta, \gamma : \bigvee_m F(S^1, m)_+ \wedge_{\Sigma_{m-1}} \left((S^{2k})^{b\alpha_k})\right)^{(m-1)} \la   \bigvee_n F(S^1, n)_+ \wedge_{\Sigma_{n}} \left((S^{2k})^{b\alpha_k})\right)^{(n)} 
 $$
 where for each $m$,
  $$\beta : F(S^1, m)_+ \wedge_{\Sigma_{m-1}} \left((S^{2k})^{b\alpha_k})\right)^{(m-1)}  \to F(S^1, m-1)_+ \wedge_{\Sigma_{m-1}} \left((S^{2k})^{b\alpha_k})\right)^{(m-1)} $$
 is defined by deleting the last coordinate of the configuration in $F(S^1, m)$.   Whereas
  $$
  \gamma :    F(S^1, m)_+ \wedge_{\Sigma_{m-1}} \left((S^{2k})^{b\alpha_k})\right)^{(m-1)}  \to F(S^1, m)_+ \wedge_{\Sigma_{m}} \left((S^{2k})^{b\alpha_k})\right)^{(m)} 
  $$
 is given by smashing with the unit, $ \left((S^{2k})^{b\alpha_k})\right)^{(m-1)}\wedge \bs \xr{1\wedge u} \left((S^{2k})^{b\alpha_k})\right)^{(m)}$.  
 
 \med
 We make two observations about this construction.   
 \begin{enumerate}\label{observe}
 \item Notice that the Thom spectrum $(S^{2k})^{b\alpha_k}$ is equivalent to the $CW$ spectrum
 $$
 (S^{2k})^{b\alpha_k} \simeq \bs \cup_{\tilde \alpha_k} D^{2k}
 $$
 where the attaching map   $\tilde \alpha_k : \Sigma^\infty S^{2k-1} \to \bs$ is  defined as follows. 
 
 Consider the composition $S^{2k} \xr{b\alpha_k} BO \xr{BJ} BGL_1(\bs)$
 where $BJ$ is the delooping of the $J$-homomorphism $J : O \to GL_1(\bs)$.  Applying the loop space defines a map $S^{2k-1} \to GL_1(\bs)$.  Since $S^{2k-1}$ is connected, its image lies in a single component of $GL_1(\bs)$ which is equivalent to the component of the basepoint in $QS^0$.  The adjoint of this map is the definition of the map
 $\tilde \alpha_k : \Sigma^\infty (S^{2k-1}) \to \bs$.  Notice, that by definition it is in the image of the $J$ homomorphism,  $J : \pi_{2k-1}O \to \pi_{2k-1}(\bs)$. 
 
 \item   $ C(S^1, (S^{2k})^{b\alpha_k}) \simeq C(S^1, \bs \cup_{\tilde \alpha_k} D^{2k})$ is a naturally filtered spectrum, where the filtration is by the cardinality of the configuration of points in $S^1$.   Let $\cf_m( C(S^1, \bs \cup_{\tilde \alpha_k} D^{2k})$ be the $m^{th}$ filtration. We observe that filtration $0$ part $\cf_0$  is the unit $\bs \hk C(S^1, \bs \cup_{\tilde \alpha_k} D^{2k})$, while filtration 1 part is  $ \cf_1( C(S^1, \bs \cup_{\tilde \alpha_k} D^{2k}) = S^1_+ \wedge \bs \cup_{\tilde \alpha_k} D^{2k}$.
  \end{enumerate}
  
  \med
  By the inclusion of the filtration 0 part, we have a cofibration sequence of spectra,
   \begin{equation}\label{cofib}
   \bs \to C(S^1, \bs \cup_{\tilde \alpha_k} D^{2k}) \simeq THH(\thomk) \to C(S^1, \bs \cup_{\tilde \alpha_k} D^{2k}) /\bs.
   \end{equation}.
   
   In the case where $\alpha_k$ is replace by the trivial homotopy class, this cofibration sequence becomes
   $$
   \bs \to C(S^1, \Sigma^\infty (S^{2k}_+)) \simeq THH( \Sigma^\infty (\Omega S^{2k+1}_+)) \to C(S^1,   \Sigma^\infty (S^{2k}_+))/\bs.
   $$
By the splitting given by  (\ref{split})  above,   this cofibration splits, and  $$C(S^1,   \Sigma^\infty (S^{2k}_+))/ \bs \simeq  \Sigma^\infty \left(\bigvee_n S^1_+ \wedge_{\bz/n} S^{2kn}\right).$$

So to prove that $THH(\thomk)$ is \sl not \rm equivalent to $  THH( \Sigma^\infty (\Omega S^{2k+1}_+))$,  it suffices to show that the cofibration sequence (\ref{cofib}) \sl does not \rm split.    But to do this, it suffices to check that the cofibration sequence one gets by restricting to filtration 1 of  $C(S^1,  C(S^1, \bs \cup_{\tilde \alpha_k} D^{2k}))$ 
does not split.  That is, by the second observation above, it suffices to check that the cofibration sequence
$$
\bs \to S^1_+ \wedge \left( \bs \cup_{\tilde \alpha_k} D^{2k}\right) \to \left(S^1_+\wedge S^{2k}\right) \vee S^1_+
$$ 
does not split.  But this is true so long as we know that the attaching map $\tilde \alpha_k : \Sigma^\infty (S^{2k-1}) \to \bs$ is nontrivial. 
But this class is in the image of $J$, constructed out of $\alpha_k : S^{2k+1} \to U$ as described above.
By standard calculations of Quillen and Adams, as described in \cite{ak}, for $ 2k+1$  congruent to $1$, $3$, or $5$  $ modulo \, 8$,  this class is nontrivial.
Hence the cofibration sequence (\ref{cofib}) does not split which means that $THH(\thomk)$ and $THH( \Sigma^\infty (\Omega S^{2k+1}_+))$ are not homotopy equivalent spectra.  By Theorem \ref{lagTHH} this implies that the Lagrangian immersion $\phi_k$ is \sl not \rm Lagrangian isotopic to a Lagrangian embedding.

\section{A topological Hochschild (co)homology perspective }

In this section we give a topological Hochschild homology and cohomology interpretation of the Calabi-Yau structures and the dualities between the manifold string topology ring spectrum $\cs^\bullet_{P}(M)$ and the Lie group string topology coalgebra spectrum $\cs^P_\bullet (M)$.

\med
We continue to consider a principal bundle $G \to P \xr{p} M$ where $M$ is a closed manifold of dimension $n$ and $G$ is a compact Lie group of dimension $d$.  A choice of connection on the bundle $P$ defines a \sl holonomy \rm map
$$
h_P : \Omega M \to G.
$$
This is a map of group-like $A_\infty$ spaces, and the induced map of classifying spaces, $Bh_P : M \simeq B(\Omega M) \to BG$ classifies the bundle $P$.   

We then have an induced map of ring spectra and differential graded algebras that by abuse of notation we still denote by $h_P$:
$$
\Sigma^\infty (\Omega M_+) \xr{h_P} \Sigma^\infty (G_+)    \quad \text{and} \quad C_*(\Omega M) \xr{h_P} C_*(G)
$$
These holonomy maps therefore define bimodule structures of $\Sigma^\infty (G_+)$ over $\Sigma^\infty (\Omega M_+)$ and of $C_*(G)$ over $C_*(\Omega M)$.     We can therefore study the (topological) homology of these algebras with coefficients in these bimodules.  In what follows we suppress the map $h_P$ from the notation regarding these bimodules.  This abuse is somewhat justified because given any two choices of holonomy maps, the induced module structures will be equivalent.   We also note that a choice of holonomy defines an inherited dual bimodule structure on  the Spanier-Whitehead dual $G^\vee$ over   $\Sigma^\infty (\Omega M_+)$, and similarly the cochains $C^*(G)$ inherit the dual bimodule structure over $C_*(\Omega M)$.  One of the main results of this section is the following. 

\med
\begin{theorem}\label{hochschild}  We have the following equivalences involving topological Hochschild homology $THH_\bullet$ and topological Hochschild cohomology $THH^\bullet$.
\begin{enumerate} 
\item $THH_\bullet(\Sigma^\infty (\Omega M_+), \Sigma^\infty (G_+)) \simeq \Sigma^\infty(\pad_+)$ \notag \\
\item $ THH^\bullet (\Sigma^\infty (\Omega M_+), \Sigma^\infty (G_+)) \simeq (\pad)^{-TM}  \simeq \cs_P^\bullet (M).   $ \quad \text{This equivalence is one of ring spectra.} \\
\item $THH_\bullet(\Sigma^\infty (\Omega M_+), G^\vee) \simeq (\pad)^{-\tv} \simeq \cs_\bullet^P(M).$  \quad \text{This equivalence is one of coalgebra spectra.} 
\end{enumerate}
\end{theorem}

\begin{proof}  Given any   homomorphism $\phi : H \to G$ of topological groups,  then one has that $$THH_\bullet(\Sigma^\infty (H_+), \Sigma^\infty (G_+)) \simeq \Sigma^\infty (EH \times_H G^{Ad})$$
where $G^{Ad}$ represents the adjoint (conjugation) action of $H$ on $G$:   $h\cdot g = \phi(h)g\phi(h)^{-1}$.  This is because $THH_*(\Sigma^\infty (H_+), \Sigma^\infty (G_+)) $ is equivalent to the suspension spectrum of the cyclic bar construction $N^{cy}(H,G)$ which Waldhausen showed is equivalent to the homotopy orbit space of $H$ acting on $G$ via the conjugation action \cite{waldhausen}.  In our case, we may think of $H$ as the based loop space $\Omega M$ by taking $H$ to be a topological group of the same $A_\infty$-homotopy type.  (As we did earlier, by abuse of notation we still call this group $\Omega M$.) Then this  observation says that
$$
THH_\bullet(\Sigma^\infty (\Omega M_+), \Sigma^\infty (G_+)) \simeq  \Sigma^\infty(E\Omega M \times_{\Omega M} G^{Ad}_+) \simeq \Sigma^\infty (\pad_+).
$$
This proves part (1) of the theorem.

For part (2), we use the similarly well-known fact that the topological Hochschild cohomology of the suspension spectrum of a group can be described as a homotopy fixed point spectrum.  That is, like above, let $\phi : H \to G$ be a   homomorphism of topological groups, then

\begin{equation}
THH^\bullet (\Sigma^\infty (H_+),  \Sigma^\infty(G_+))  \simeq 
\Sigma^\infty(G_+)^{h\Sigma^\infty (H_+)}
\end{equation}
where $\Sigma^\infty (H_+)$ acts on $\Sigma^\infty (G_+)$ via the conjugation action.   Like above we refer to this as the adjoint action and we write it as $\Sigma^\infty (G_+)^{Ad}$. (See \cite{westerland} or   section 4 of \cite{malm}.)   

  Now this homotopy fixed point spectrum is  defined to be
$$
\Sigma^\infty(G_+)^{h\Sigma^\infty (H_+)}   =  Rhom_{\Sigma^\infty (H_+)}(\bs,    \Sigma^\infty (G_+)^{Ad}).
$$
So in our case we have that
$$
 THH^\bullet (\Sigma^\infty (\Omega M_+), \Sigma^\infty (G_+))   \simeq Rhom_{\Sigma^\infty (\Omega M_+)}(\bs,    \Sigma^\infty (G_+)^{Ad}).
 $$
 Now notice that since the homotopy orbit spectrum of  $\Sigma^\infty (\Omega M_+)$ acting on $ \Sigma^\infty (G_+)$  via the adjoint action is $\Sigma^\infty (\pad)$,   this spectrum of $\Sigma^\infty(\Omega M_+)$-equivariant morphisms is equivalent to the spectrum of sections of  the parameterized spectrum $\Sigma^\infty (G_+) \to \Sigma^\infty_M ((\pad)_+) \to M$:
$$
 Rhom_{\Sigma^\infty (\Omega M_+)}(\bs,    \Sigma^\infty (G_+)^{Ad}) \simeq  \Gamma_M(\Sigma^\infty_M ((\pad)_+)).
 $$
 But this spectrum of sections is, by definition, the manifold string topology spectrum, $\cs^\bullet_P(M)$. Furthermore it is clear that the ring spectrum structures coincide under this equivalence.  Furthermore,
  by the Atiyah-Poincar\'e duality theorem proved by Klein \cite{klein}, \cite{CohenKlein} we have that
 $$
  \Gamma_M(\Sigma^\infty_M ((\pad)_+)) \simeq (\pad)^{-TM}
  $$
  as ring spectra.
  Putting these together says that 
  $$
  THH^\bullet (\Sigma^\infty (\Omega M_+), \Sigma^\infty (G_+))  \simeq  \cs^\bullet_P(M) \simeq (\pad)^{-TM} 
  $$
  as ring spectra.  This is the statement of part (2) of the theorem.
  
  We now consider part (3) of the theorem.
  The Spanier-Whitehead dual of the simplicial spectrum $THH_\bullet (\Sigma^\infty (\Omega M_+), G^\vee)$ can, because of the compactness assumption of $G$, be described  as the totalization of the  cosimplicial spectrum given by taking the Spanier-Whitehead dual levelwise.  This cosimplicial spectrum has as its spectrum of $k$-simplices, $Rhom_\bs ( \Sigma^\infty (\Omega M_+)^{(k)}, \Sigma^\infty (G_+))$.  The coface maps and the codegeneracies are the duals of the face and degeneracy maps in the simplicial spectrum $THH_\bullet (\Sigma^\infty (\Omega M_+), G^\vee)$.  But this cosimplicial spectrum is exactly the cosimplicial spectrum defining the topological Hochschild cohomology spectrum,
  $THH^\bullet (\Sigma^\infty (\Omega M_+), \Sigma^\infty (G_+))$.  That is, we have observed that
  $$
  THH_\bullet (\Sigma^\infty (\Omega M_+), G^\vee)^\vee = THH^\bullet (\Sigma^\infty (\Omega M_+), \Sigma^\infty (G_+)).
  $$
This is a ring spectrum, so its Spanier-Whitehead dual, $  THH_\bullet (\Sigma^\infty (\Omega M_+), G^\vee)$ inherits the structure of a coalgebra spectrum.   Furthermore, we know from part (2) that 
  $$
  THH_\bullet (\Sigma^\infty (\Omega M_+), G^\vee)^\vee = THH^\bullet (\Sigma^\infty (\Omega M_+), \Sigma^\infty (G_+)) \simeq \cs^\bullet_P(M) \simeq (\pad)^{-TM}
  $$
  as ring spectra.  Thus applying Spanier-Whitehead duality, Theorem \ref{SWdual}, and Theorem \ref{main} we have
  $$
  THH_\bullet (\Sigma^\infty (\Omega M_+), G^\vee) \simeq \cs^P_\bullet (M) \simeq (\pad)^{-\tv}
  $$
  as coalgebra spectra. 
 
\medskip

Alternatively, as in the proof of part (1) of the theorem, we have an equivalence

$$THH_\bullet(\Sigma^\infty(\Omega M _+), G^\vee) \simeq \bs \wedge^L _{\Sigma^\infty(\Omega M_+)} (G^\vee)^{Ad}$$

Now, $\cs_\bullet^P(M)$ is the homology spectrum of the spectrum over $M$ whose fiber is $G^\vee$, on which $\Omega M$ acts via (the dual of the) conjugation action. Therefore, we see that

$$\cs_\bullet^P(M) \simeq \bs \wedge^L _{\Sigma^\infty(\Omega M_+)} (G^\vee)^{Ad} \simeq THH_\bullet(\Sigma^\infty(\Omega M _+), G^\vee)$$
  
For all these spectra, the coproduct comes from dualizing the multiplication map $G \times G \to G$; see below for an explicit description of the coproduct on $\bs \wedge^L _{\Sigma^\infty(\Omega M_+)} (G^\vee)^{Ad}$. 
 \end{proof}

\med 
 We end by observing how the twisted compact Calabi-Yau structure on $\cs^\bullet_P(M)\simeq (\pad)^{-TM}$ can be understood from this Hochschild perspective.  
 
 \med
 The twisting bimodule in the twisted $cCY$ structure on $R = \cs^\bullet_P(M)$ is $Q = \Sigma^{d-n}(\cs_\bullet^P(M)) \simeq \Sigma^{d-n}(\pad)^{-\tv}$. 
 We first observe that the duality pairing (\ref{pairing})  in the dimension $n-d$  twisted compact Calabi-Yau structure
 $$
 \langle , \rangle : Q \wedge R \to  \Sigma^{d-n}\bs
 $$
 can be described in terms of Hochschild theory as follows.   As described above we have natural equivalences 
 \begin{align}
 R = THH^\bullet (\Sigma^\infty (\Omega M_+), \Sigma^\infty (G_+)) &\simeq Rhom_{\lom} (\bs, \Sigma^\infty (G_+)^{Ad}), \quad \text{and} \notag \\
 Q = \Sigma^{d-n}THH_\bullet (\lom, G^\vee) &\simeq \Sigma^{d-n} \bs \wedge_{\lom} (G^\vee)^{Ad} \notag
 \end{align}
 
 We therefore have a cap product
 \begin{align}
 \cap : Q \wedge R = \left(\Sigma^{d-n} \bs \wedge_{\lom} (G^\vee)^{Ad}\right)  &\wedge \left(Rhom_{\lom} (\bs, \Sigma^\infty (G_+)^{Ad}\right)   \\ &\la  \Sigma^{d-n}(\omg)^{Ad} \wedge_{\lom}(G^\vee)^{Ad}  \notag
  \end{align}
  The evaluation map $ev: \omg \wedge G^\vee \to \bs$ is $\lom $ - invariant with respect to conjugation, and so defines a map
  $$
  ev : \Sigma^{d-n}(\omg)^{Ad} \wedge_{\lom}(G^\vee)^{Ad} \to \Sigma^{d-n} \bs.
  $$
  Composing these defines the duality pairing
\begin{align}
  \langle , \rangle : Q \wedge R &\to  \Sigma^{d-n}\bs   \notag \\
  \left(\Sigma^{d-n}THH_\bullet (\lom, G^\vee)\right)&\wedge \left(THH^\bullet (\Sigma^\infty (\Omega M_+), \Sigma^\infty (G_+))\right)  \xr{\cap} \notag \\
   \Sigma^{d-n}\left((\omg)^{Ad} \wedge_{\lom}(G^\vee)^{Ad}\right) &\xr{ev} \Sigma^{d-n} \bs.  \notag
  \end{align}

  \med
 We end by observing how the   bimodule structure  of $Q$ over $R$ can be understood at the topological Hochschild (co)homology level.   
 $Q = \Sigma^{d-n}\cs^P_\bullet (M) \simeq \Sigma^{d-n} THH_\bullet (\lom, G^\vee)$. Now $\cs^P_\bullet (M) \simeq THH_\bullet (\lom, G^\vee) \simeq \bs \wedge_{\lom}(G^\vee)^{Ad}$ is a coalgebra spectrum, and its coproduct $\psi$ can be seen on the $THH$-level as follows:
 
$$\xymatrix{
\bs \wedge_{\lom}(G^\vee)^{Ad} \ar[r]^-{1 \wedge \mu^\vee} & \bs \wedge_{\lom}((G \times G)^\vee)^{Ad} & \ar[l]^-{\simeq}  \bs \wedge_{\lom}(G^\vee \wedge G^\vee)^{Ad}  \ar[d]^-{\Delta} \\
& &  (\bs \wedge_{\lom}(G^\vee)^{Ad}) \wedge (\bs \wedge_{\lom}(G^\vee)^{Ad})
}$$

This needs some explanation.   $\mu : G \times G \to G$ is the multiplication map.  $\mu^\vee : G^\vee \to G^\vee \wedge G^\vee$ is its Spanier - Whitehead dual.  It is equivariant with respect to the adjoint action of $\lom$ since $\mu$ is equivariant with respect to the adjoint action.  
The map $\Delta : \bs \wedge_{\lom}(G^\vee \wedge G^\vee)^{Ad} \to  \left(\bs \wedge_{\lom}(G^\vee)^{Ad}\right)   \wedge \left(\bs \wedge_{\lom}(G^\vee)^{Ad}\right)$
is the map induced by thinking of $\Omega M$ as a subgroup of $\Omega M \times \Omega M$ as the diagonal subgroup.

The action map $Q \wedge R \to R$ is then homotopic to the composition
\begin{align}
&Q\wedge R  \simeq  \left( \Sigma^{d-n} THH_\bullet (\lom, G^\vee)\right)  \wedge THH^\bullet (\Sigma^\infty (\Omega M_+), \Sigma^\infty (G_+)) \xr{\psi \wedge 1} \notag \\
&\Sigma^{d-n} THH_\bullet (\lom, G^\vee) \wedge   THH_\bullet (\lom, G^\vee)  \wedge THH^\bullet (\Sigma^\infty (\Omega M_+), \Sigma^\infty (G_+))   \xr{1 \wedge \langle , \rangle } \notag \\
 &\Sigma^{d-n} THH_\bullet(\lom, G^\vee)\wedge \bs = Q. \notag
  \end{align}
  
  The left module structure is homotopic to the analogous composition $R \wedge Q \to Q$.

\end{document}